\numberwithin{equation}{section}
\title{Maximal estimates for Weyl sums on $\mathbb{T}^{d}$}
\date{}
\newcommand{\R}{\mathbb{R}}
\newcommand{\Z}{\mathbb{Z}}
\newcommand{\N}{\mathbb{N}}
\newcommand{\T}{\mathbb{T}}
\newcommand{\eps}{\varepsilon}
\begin{document}

\address{Department of Mathematics, University of Illinois Urbana-Champaign, Urbana, IL 61801, USA}
\email{aabarron@illinois.edu}

\author{Changxing Miao}
\address{Institute of Applied Physics and Computational Mathematics, Beijing 100088, China}
\email{miao\_changxing@iapcm.ac.cn}

\author{Jiye Yuan}
\address{Department of Mathematics, Beijing Key Laboratory on Mathematical Characterization, Analysis, and Applications of Complex Information, Beijing Institute of Technology, Beijing 100081, China}
\email{yuan\_jiye@bit.edu.cn}

\author{Tengfei Zhao}
\address{School of Mathematics and Physics, University of Science and Technology Beijing, Beijing 100083, China}
\email{zhao\_tengfei@ustb.edu.cn}

\newtheorem{thm}{Theorem}[section]
\newtheorem{lemma}[thm]{Lemma}
\newtheorem{cor}[thm]{Corollary}
\newtheorem{prop}[thm]{Proposition}
\newtheorem{remark}[thm]{Remark}
\newtheorem{defi}[thm]{Definition}
\newtheorem{conj}[thm]{Conjecture}
\renewcommand\theequation{\thesection.\arabic{equation}}

\newenvironment{equ}{\begin{equation}}{\end{equation}}
\newenvironment{equ*}{\begin{equation*}}{\end{equation*}}

\newtheorem{innercustomprop}{Proposition}
\newenvironment{customprop}[1]
  {\renewcommand\theinnercustomprop{#1}\innercustomprop}
  {\endinnercustomprop}

 \newtheorem{innercustomlemma}{Lemma}
\newenvironment{customlemma}[1]
  {\renewcommand\theinnercustomlemma{#1}\innercustomlemma}
  {\endinnercustomlemma}

\maketitle

\vspace{-.5cm}
{\centerline{(With an Appendix by Alex Barron)}}

\begin{abstract}
In this paper, we obtain the maximal estimate for the Weyl sums on the torus $\T^d$ with $d\geq 2$, which is sharp up to the endpoint.
We also consider two variants of this problem which include the maximal
estimate along the rational lines and on the generic torus.
Applications, which include some new upper bound on the Hausdorff dimension of the sets
associated to the large value of the Weyl sums, reflect the compound phenomenon between the square root cancellation and the constructive interference.
In the Appendix, an alternate proof of Theorem \ref{T-d} inspired by Baker's argument in \cite{Baker}
is given by Barron, which also improves the $N^{\epsilon}$ loss in Theorem \ref{T-d},
and the Strichartz-type estimates for the Weyl sums with logarithmic losses are obtained by the same
argument.
\end{abstract}

\vspace{0.5cm}

\begin{center}
 \begin{minipage}{100mm}
   { \small {{\bf Key Words:}  Periodic Schr\"odinger equation; Weyl sums; Maximal estimate; Hausdorff
   dimension; Strichartz-type estimate.
   }
      {}
   }\\
    { \small {\bf AMS Classification:}
      {42B25,  42B37, 35Q41.}
      }
 \end{minipage}
 \end{center}

\section{Introduction}\label{intro}
In this paper, we study the maximal estimate
\begin{equation}
  \Big\|\sup_{0<t<1}|u(x,t)|\Big\|_{L^{p}(\T^d)} \lesssim N^{s}\Big(\sum_{n\in\Z^d}|a_n|^{2}\Big)^{\frac{1}{2}}
\end{equation}
of the following function
\begin{equation}\label{torus-sln}
u(x,t)=\sum_{n\in\Z^d}a_{n}e^{2\pi i(n\cdot x+|n|^{2}t)}
\end{equation}
for $1\le p<\infty$ and $s>0$,
which is the solution of the Schr\"odinger equation
\begin{equation}
    i\partial_{t}u - \Delta u=0
\end{equation}
with the periodic boundary condition
\begin{equation}
  u(x,0) = \sum_{n\in \mathbb Z^d}a_{n}e^{2\pi in\cdot x}.
\end{equation}


One of the prominent applications of the maximal estimates is the pointwise convergence
of the solution to the corresponding equations.
In the Euclidean setting, in 1980 Carleson in \cite{Car} first put out a question about
finding the minimal $s$
such that for $\forall f\in H^{s}(\R)$ there holds
\begin{equation}\label{pointwise-E}
  \lim_{t\rightarrow0}e^{it\Delta_{\R}}f(x) = f(x), \quad\text{for\,\,a.e.}\,\,x\in\R,
\end{equation}
and he  proved \eqref{pointwise-E} holds for $s\ge\frac{1}{4}$ in $\R$.
Later Dahlberg and Kenig in \cite{D-K} proved that the result obtained by Carleson is sharp.

For the higher dimension cases, the geometric structure is quite different and seems more complicated.
In dimensions $d\ge2$, Sj\"olin in \cite{Sjo} and Vega in \cite{Vega}
established the pointwise convergence for $s>\frac{1}{2}$ independently.
With the progress of Fourier restriction theory for the paraboloid or the sphere,
many harmonic analysis tools are also applied to the pointwise convergence problem.
By employing the localization lemmas and a crucial relation (defined in \cite{Tao})
between tubes and cubes corresponding to the wave packet decomposition, Lee in \cite{Lee} proved the pointwise convergence holds for
$s>\frac{3}{8}$ in $\mathbb{R}^2$.
On the other hand, for $d\ge 2$, by making use of the multilinear estimates for the Fourier extension operator,
Bourgain in \cite{Bourgain-0} proved $s>\frac{1}{2}-\frac{1}{4d}$.
For negative results, Bourgain \cite{Bourgain-2} proved the necessity of $s\ge\frac{d}{2(d+1)}$ with $d\ge2$ for the pointwise convergence and Luc\`a and Rogers in \cite{LR} gave an alternative proof.

Recently, by establishing the refined Strichartz estimates,
Du-Guth-Li in \cite{DGL} proved the result of $s>\frac{1}{3}$ in dimension $d=2$  via polynomial partition  and $\ell^2$ decoupling estimates, see \cite{BD}.
Later, Du-Zhang in \cite{DZ} obtained that pointwise convergence holds for $s>\frac{d}{2(d+1)}$ with $d\ge3$, which is sharp up to the endpoint.
The refined Strichartz estimates are closely related to the
the refined decoupling estimates, see Du-Guth-Li-Zhang \cite{DGLZ},
Demeter \cite{Demeter} and Guth-Iosevich-Ou-Wang \cite{GIOW}.


%

As in $\mathbb R^d$, considering the counterexample in Subsection \ref{lower-bound-for-certain-functions} as well as in \cite{DKWZ-2020} and \cite{DZ}, one may conjecture that the maximal estimate in $\T^d$
\begin{equation}\label{m-3}
\Big\|\sup_{0<t<1}\big|\sum_{|n|\le N} a_{n}e^{2\pi i(n\cdot x+n^{2}t)}\big|\Big\|_{L^{p}(\T^d)}
\lesssim N^{s_p}(\sum_{|n|\le N }|a_{n}|^{2})^{\frac{1}{2}},
\end{equation}
holds for $1\le p \le 2$ and $s_p>\frac{d}{2(d+1)}$.

From preliminary analysis, compared to the Euclidean setting,  one may find that the resonances in the periodic setting are more intense, so that maximal estimates \eqref{m-3} seem harder to justify.
More precisely, the time localization arguments play an important role in the proof for the maximal estimate of the Schr\"odinger equaton in $\R^d$, however, is invalid in $\T^d$, see \cite{Bourgain-0, DGL, DZ, Lee, M-V}.
From the Strichartz estimates in $\mathbb T^d$,
Moyua and Vega in \cite{M-V},
Compaan, Luc\`{a}, and  Staffilani
 in \cite{CLS-2021}  proved that \eqref{m-3} holds for
  $1\le p\le \frac{d+2}{d}$ and $s_p>\frac{d}{d+2}$, which
seems the best result
for the general initial value of the periodic Schr\"odinger equation
 up to now.



On the other hand, Barron \cite{A-B}
studied the maximal estimate \eqref{m-3} for a special case ($a_{n}=1$) on $\T$
via induction on scales
based on the classical Weyl estimates and the Hardy-Littlewood circle method.
The key ingredients are exploring and  handling the large values of the Weyl sums by
better Diophantine approximation properties.
%
%
Later, Baker in \cite{Baker} gave another proof based on the
Dirichlet approximation lemma for time $t$ and
refined upper bound of  Weyl sums,
which  
exposes some direct information of the spatial variable $x$.  
Moreover,  Baker, Chen and Shparlinski \cite{BCS1, BCS2} consider generalized maximal estimates of Weyl sums of the degree $d$.




%

In this paper, we first consider the maximal estimate for the Weyl sums in higher dimensions for $d\ge2$.

\begin{thm}\label{T-d}
\begin{equation}\label{max-Lp}
\bigg\|\sup_{0<t<1}\Big| \prod_{1\leq j\leq d} \sum_{1\le n_{j}\le N}e^{2\pi i(n_jx_j+|n_j|^2t)}\Big|\bigg\|_{L^{p}(\mathbb{T}^{d})}
\lesssim N^{\frac{d}{2}+s_p},
\end{equation}
where $s_p>\frac{d}{2(d+1)}$ when $1\leq p \leq \frac{2(d+1)}{d}$ and
$s_p>\frac{d}{2}-\frac{d}p$ when $ p \geq \frac{2(d+1)}{d}$.

\end{thm}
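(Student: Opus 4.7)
The strategy is to combine the factorization $F(x, t) = \prod_{j=1}^{d} W_N(x_j, t)$, where $W_N(x, t) := \sum_{n=1}^{N} e^{2\pi i(nx + n^2 t)}$, with Dirichlet approximation in $t$ and the classical pointwise Weyl/Gauss-sum bound at major arcs. A naive approach---distributing the supremum across factors by H\"older's inequality and applying the one-dimensional maximal estimate of Baker/Barron in each variable---yields only $s_p > d^2/(2(d+1))$ at the critical exponent $p_c := 2(d+1)/d$, which for $d \ge 2$ is strictly worse than the required $d/(2(d+1))$. The essential gain is that the classical Weyl bound at $t = a/q$ is \emph{independent of $a$}, so after discretization the supremum over major arcs collapses to a single $\ell^p$-sum over the denominator $q$.

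For the time-discretization, Dirichlet's theorem gives, for each $t \in [0, 1)$, coprime $(a, q)$ with $q \le Q$ and $|t - a/q| \le 1/(qQ)$. Choosing $Q = N^{C_d}$ with $C_d$ sufficiently large and using the Bernstein bound $|\partial_t F| \lesssim N^{d+2}$ (the time frequencies of $F$ are at most $dN^2$) reduces matters to
\[
\sup_{0 < t < 1} |F(x, t)| \lesssim \sup_{(a, q) = 1,\, q \le Q} |F(x, a/q)|
\]
up to a negligible additive error. Then, writing $n = \ell + qk$ in $W_N(x, a/q)$, evaluating the geometric sum in $k$, and invoking Gauss-sum cancellation in $\ell$ produces the $a$-free pointwise estimate
\[
|W_N(x, a/q)| \lesssim \phi_q(x) := \sqrt{q}\, \min\bigl(N/q,\ \|qx\|^{-1}\bigr),
\]
so that $|F(x, a/q)| \le \prod_j \phi_q(x_j)$ and $\sup_{(a,q)} |F(x, a/q)| \le \sup_{q \le Q} \prod_j \phi_q(x_j)$.

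For the main $L^{p_c}$ estimate, $\phi_q$ has $q$ peaks of height $N/\sqrt{q}$ and width $\sim 1/N$, giving $\|\phi_q\|_{L^{p_c}(\T)}^{p_c} \lesssim q^{-1/d} N^{(d+2)/d}$. Since the product factorises, $\|\prod_j \phi_q(x_j)\|_{L^{p_c}(\T^d)}^{p_c} = \|\phi_q\|_{L^{p_c}(\T)}^{d p_c} \lesssim q^{-1} N^{d+2}$, whence
\[
\Bigl\|\sup_{q \le Q} \prod_j \phi_q(x_j)\Bigr\|_{L^{p_c}(\T^d)}^{p_c} \le \sum_{q \le Q} q^{-1} N^{d+2} \lesssim N^{d+2} \log N,
\]
where $p = p_c$ is precisely the exponent at which the $q$-sum becomes borderline logarithmic. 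Taking $p_c$-th roots gives $\lesssim N^{d/2 + d/(2(d+1))}(\log N)^{1/p_c}$, which is $\lesssim N^{d/2 + s_p}$ for any $s_p > d/(2(d+1))$. For $1 \le p \le p_c$ the estimate follows by $L^p$-monotonicity on $\T^d$, while for $p \ge p_c$ interpolation with the trivial bound $\sup_t |F| \le N^d$ yields the second stated exponent $s_p > d/2 - d/p$. The principal technical obstacle is obtaining the pointwise Weyl estimate uniformly in $(a, q, x)$---which requires controlling the shifted Gauss sum $\sum_{\ell = 0}^{q-1} e^{2\pi i (a\ell^2 + y\ell)/q}$ uniformly in real $y$---together with arranging the Dirichlet discretization so that it loses only $N^{\varepsilon}$.
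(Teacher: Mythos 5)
Your overall strategy---factor the Weyl sum, use Dirichlet approximation of $t$, invoke the major-arc Weyl/Gauss bound, and sum over the denominator $q$ in $L^{p_c}$---is in the same spirit as the argument in the Appendix (Proposition A.1 and A.4). The $L^{p_c}$ computation, including the observation that $p_c = 2(d+1)/d$ is precisely the borderline exponent at which $\sum_q q^{-1}$ appears, is correct. However, the mechanism you use to discretize the time variable has a genuine gap that breaks the proof.

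To reduce $\sup_{0<t<1}|F(x,t)|$ to $\sup_{(a,q)}|F(x,a/q)|$ via the Bernstein bound $|\partial_t F|\lesssim N^{d+2}$, you must take the Dirichlet parameter $Q = N^{C_d}$ with $C_d\ge d/2+2$ so that $N^{d+2}/Q$ is smaller than the target $N^{\alpha}$. But then $q$ is allowed to range up to $N^{C_d}\gg N^2$, and the pointwise estimate $|W_N(x,a/q)|\lesssim \sqrt{q}\min(N/q,\|qx\|^{-1})$ is simply false in that range. For instance, take $q>16N^{2}$, $a=1$, and $|x|<1/(16qN)$; then every phase $2\pi(nx+n^{2}/q)$ lies in $(0,\pi/4)$, so $|W_N(x,1/q)|\gtrsim N$, whereas your claimed bound gives $\sqrt{q}\min(N/q,\|qx\|^{-1})=\sqrt{q}\cdot N/q=N/\sqrt{q}\lesssim 1$. (Even for $q\le N$ the decomposition $n=\ell+qk$ produces an $\ell$-dependent geometric sum whose discrepancy contributes an incomplete Gauss sum of size $\sqrt{q}\log q$, but that loss is harmless; the failure for $q\gg N$ is not.) Conversely, if you restrict Dirichlet to $Q=N$ so that $q\le N$, then $|t-a/q|\le 1/(qN)$ and the Bernstein error $N^{d+2}/(qN)=N^{d+1}/q$ overwhelms the main term. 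The two requirements---$q\lesssim N$ for the Weyl bound and $Q\gg N^{d/2}$ for Bernstein---cannot be met simultaneously, so there is no regime in which your reduction is valid.

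The way the paper (and the Appendix) closes this gap is by \emph{not} pinning $t$ to an exact rational. Instead one applies Dirichlet with $Q=N$ and then invokes the refined major-arc estimate (Lemma \ref{w-1}, due to Bourgain): for $q\le N$, $(a,q)=1$, $|t-a/q|<1/(qN)$,
\[
|W_N(x,t)|\lesssim N^{1+\epsilon}\min\bigl(q^{-1/2},\,N^{-1}|qt-a|^{-1/2}\bigr),
\]
which controls the whole arc, not just its center. One then couples this with Schmidt's lemmas to obtain the constraint $\|jxq\|\lesssim N^{1+4\epsilon}|W_N|^{-2}$ on the spatial variable, recovering a bound of the same quantitative strength as your $\phi_q$ but valid for every $t$ in the arc; the $L^{p_c}$ integration in $(q,\beta)$ then proceeds essentially as you have written it. If you replace the Bernstein step with this arc estimate and keep $q\le N$, the rest of your computation goes through. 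Note also that your claim that the $L^{p_c}$ norm of $\sup_q\prod_j\phi_q(x_j)$ can be bounded by $\sum_q\|\prod_j\phi_q\|_{L^{p_c}}^{p_c}$ works only because you first pigeonhole on $q$; the paper accomplishes the same thing by fixing, for each $x$, the $q$ produced by Dirichlet from the maximizing $t(x)$.
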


We prove  this theorem by analysing  the set where
large values of the Weyl sums are obtained, inspired by the methods of \cite{Baker,A-B}.
More precisely, by the Dirichlet theorem and the classical Weyl estimates, we find that the Weyl sums become large for certain time $t$ which is in a neighborhood of $a/q$ with coprime $q$ satisfying $1\leq a<q$.
On the one hand, if there are some separating Weyl sums with respect to one direction $x_j$ of $x$ which has the upper bound $N^{\frac{1}{2}+\frac{1}{2d(d+1)}}$, then applying induction on the dimension $d$, we would prove the maximal estimate \eqref{max-Lp}.
On the other hand, if all separating Weyl sums in every direction have the lower bound
$N^{\frac{1}{2}+\frac{1}{2d(d+1)}}$,
then from a Weyl-type lemma in \cite{Bourgain-1} and
the refined Diophantine approximation in \cite{Baker,A-B},
one can utilize the Hardy-Littlewood circle method to deduce some additional restrictions on the parameter $q$.
Then, by utilizing the refined Weyl sums estimates of \cite{Schmidt}, we can
obtain the restricted region of the spatial variable $x$ in each dimension,
and conclude a refined upper bound on the  Weyl sums with respect to $x$.

In the Appendix, based on the Baker's argument in \cite{Baker},
another proof of Theorem \ref{T-d} is given by Barron, which improves the $N^{\epsilon}$ loss.
The similar argument also implies the Strichartz-type estimates for the Weyl sums.



\begin{remark}
  In view of  a simple counterexample, we see that for $p\ge\frac{2(d+1)}{d}$, the maximal estimate \eqref{max-Lp} is almost sharp up to the endpoint.
Besides, for $1\le p\le\frac{2(d+1)}{d}$,  the loss in $N$ for \eqref{max-Lp} cannot be improved even if $p$ is decreased.
In fact,  one can construct a set $E\subset\T^d$ with $|E|\gtrsim1$ such that for $x\in E$,
$$\sup_{0<t<1} \bigg| \prod_{1\leq j \leq d} \sum_{n_j=1}^{N} e^{2\pi i(n_{j} x_{j}+|n_{j}|^2t)} \bigg| \gtrsim N^{\frac{d}{2}+\frac{d}{2(d+1)}}.$$ See Section 4 for detail.


\end{remark}

Besides the maximal estimates \eqref{max-Lp}, we consider two invariant versions of the maximal estimates for the Weyl sums related to the pointwise convergence along the line $(x-rt,t)$ for $r\in\mathbb{Q}^d$ with $0<t<1$ and the maximal
estimates on the irrational tori defined below.

We first introduce the following maximal estimates.

\begin{thm}\label{T-d-2}
Let $r=(r_1,r_2,\cdots,r_d)\in\mathbb{Q}^{d}$, then we have
  \begin{equation}\label{max-ra}
  \Big\|\sup_{0<t<1} \big| \prod_{1\leq j\leq d}\sum_{1\le n_{j}\le N}e^{2\pi i(n_j(x_j-r_{j}t)+|n_j|^2t)}\big| \Big\|_{L^{p}(\mathbb{T}^{d})}
  \lesssim N^{\frac{d}{2}+s_p},
  \end{equation}
  where $s_p>\frac{d}{2(d+1)}$ when $1\leq p \leq \frac{2(d+1)}{d}$ and
  $s_p>\frac{d}{2}-\frac{d}p$ when $ p \geq \frac{2(d+1)}{d}$.
\end{thm}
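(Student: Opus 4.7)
The plan is to adapt the proof of Theorem~\ref{T-d} to the shifted setting. The first step is to complete the square in each factor, giving
\begin{equation*}
\Big|\sum_{n_j=1}^N e^{2\pi i(n_j(x_j-r_j t)+n_j^2 t)}\Big| = \Big|\sum_{n_j=1}^N e^{2\pi i(n_j x_j+(n_j-r_j/2)^2 t)}\Big|.
\end{equation*}
The right-hand side is a degree-two exponential sum whose leading coefficient in $n_j^2$ is still $t$, so the classical Weyl inequality, the Weyl-type lemma from \cite{Bourgain-1}, and the refined Weyl estimates of \cite{Schmidt} all apply with precisely the same bounds as in the unshifted case; only the ``good'' spatial centers get translated.

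With this observation in hand, I would run the same scheme as for Theorem~\ref{T-d}. By Dirichlet's approximation, every $t\in(0,1)$ admits coprime $(a,q)$ with $1\le q\le Q$ and $|t-a/q|\le 1/(qQ)$. Writing $r_j=\mu_j/\nu_j$ in lowest terms and $t=a/q+\beta$, the shift $r_j t=\mu_j a/(\nu_j q)+r_j\beta$ decomposes into a rational with denominator dividing $\nu_j q$ plus a perturbation of order $|\beta|$. The argument then proceeds by the usual dichotomy of Theorem~\ref{T-d}: if the shifted Weyl sum in some direction $j$ is bounded by $N^{1/2+1/(2d(d+1))}$, I integrate out $x_j$ and induct on $d$; otherwise all directional sums exceed this threshold and, by the Weyl-type lemma together with the refined Diophantine approximation of \cite{A-B,Baker}, $q$ is forced to be small, at which point Schmidt's estimate pins down a narrow ``good region'' for each $x_j$.

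These good regions are unions of small neighborhoods around points $b_j/q_{\ast,j}+r_j t$, which are translates of the corresponding regions in Theorem~\ref{T-d}. Since rotations of $\T$ are measure-preserving and the number of involved rationals of bounded denominator is multiplied by at most a constant depending only on $r$ (hence independent of $N$), the enumeration of Theorem~\ref{T-d} goes through with the same cardinality and delivers the same $L^p$ bound $N^{d/2+s_p}$. The main technical obstacle is bookkeeping the interaction between the Dirichlet step and the spatial shift: as $t$ ranges over an interval of length $\sim 1/(qQ)$ around $a/q$, the extra shift $r_j\beta$ varies by $O(1/(qQ))$, which is comparable to the width of a single good region, so one must verify that the union of shifted good regions neither inflates in measure nor gains in multiplicity compared to the unshifted case. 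This holds because $r\in\Q^d$ is a fixed parameter whose denominators enter only as absolute constants in the enumeration.
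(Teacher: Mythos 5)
Your proposal is essentially the paper's argument: the proof of Theorem~\ref{T-d-2} runs through the identical dichotomy of Theorem~\ref{T-d}, treating $x_l - r_l t(x)$ as the linear coefficient in $S(Xn + tn^2)$, so that Lemmas~\ref{w-1}--\ref{w-3} apply unchanged and the admissible centers $b_l/q$ simply move to $\tfrac{1}{4 r_{l,2}} \mathbb{Z} \cap [0,q]$, inflating the enumeration by only an $O_r(1)$ factor. Two small remarks: the complete-the-square step is cosmetic and the paper omits it, and your worry that $r_j(t-a/q)$ is ``comparable'' to the width of a good region is easier to dismiss than you suggest — the refined Diophantine bound $|t - a/q| \lesssim q^{-1}N^{-2\alpha_l+2\eps}$ makes $r_j(t-a/q)$ smaller than the slab width $q^{-1}N^{1-2\alpha_l+4\eps}$ by a full factor of $N^{-1}$.
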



\begin{remark}
(i)
   As a consequence of the maximal estimates \eqref{max-ra}, the Weyl sums converge to the initial datum along the lines
      $(x-rt,t)$ for $r\in\mathbb{Q}^d$ pointwise.
  In particular, the maximal estimates \eqref{max-Lp} is the special case of \eqref{max-ra} for $r=0$ which shows the pointwise convergence along the vertical line.

(ii)
  In $\R$, using stationary phase analysis and the $TT^{\ast}$ argument, Shiraki in \cite{SS} and together with Cho in \cite{Ch-Sh} obtained the pointwise
             convergence for the Schr\"odinger equation along the non-tangential lines and the tangential lines respectively.
  The authors in \cite{YZ} and
             together with Zheng in \cite{YZZ1,YZZ2} studied these several variants of
             the pointwise convergence problem for the fractional
             Schr\"odinger operator with complex time.
On $\T$, Baker, Chen and Shparlinski in \cite{BCS1} dealt with the maximal estimates for the Weyl sums
along rational and irrational non-tangential lines.
However, because of the different properties
       between the space $\T^d$ and $\R^d$ such as Dirichlet lemma and Hardy-Littlwood circle method, here we only  obtain the pointwise
             convergence along rational non-tangential lines on $\T^d$ for $d\ge2$.
One may also consider the pointwise convergence of the solutions to the Schr\"odinger equation on $\mathbb T^d$
along general curves via analysing the maximal estimates for Weyl sums.


\end{remark}

Next, we  consider
the maximal estimates for the Weyl sums on the generic torus. 
Without loss of generality, we assume that the
each component parameter $\beta_{i}$ of $\beta$ satisfies
$$
\beta_{i}\in[1,2], \quad\forall i\in\{1,\cdots,d\}.
$$
We recall the definition of genericity which one can find in \cite{DGG}.
\begin{defi}\label{generi}
  We will call a property generic in $(\beta_1,\cdots,\beta_d)$ if it is true for all
  $(\beta_1,\cdots,\beta_d)$ outside of a null set (set with measure zero) of $[1,2]^{d}$.
\end{defi}

Here, the definition of genericity  of $\beta$ is based on the  classical result on Diophantine approximation. We call $\beta=(\beta_2,\cdots,\beta_{d})$ is generic if there exists some constant $C$ such that
\begin{equation}\label{genericity}
|k_1+\beta_2 k_2+\cdots+\beta_d k_d | \geq \frac{C}{(|k_1|+\cdots |k_d|)^{d-1} \log (|k_1|+\cdots |k_d|)^{2d} }
\end{equation}
for any $k=(k_1,\cdots,k_d)\in \mathbb Z^d$,
see \cite{Cas}. 

We obtain the maximal estimates of Weyl sums on generic tori as follows.

\begin{thm}\label{T-d-3}
  For generic $\beta_2,\cdots,\beta_d$ in $[1,2]$ 
  and polynomial $Q(n) = |n_1|^{2} + \beta_2|n_2|^{2} + \cdots +\beta_d|n_d|^2$, we have
  \begin{equation}\label{max-irra}
    \Big\|\sup_{0<t<1}\big|\sum_{n\in[-N,N]^{d}\cap\mathbb{Z}^d}e^{2\pi i(x\cdot n+tQ(n))}\big|\Big\|_{L^{2}(\T^d)} \lesssim N^{\frac{d+1}{2}+\epsilon}.
  \end{equation}
\end{thm}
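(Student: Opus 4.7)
The plan is to reduce the maximal estimate to a weighted $\ell^2$ bound on the frequencies $\{Q(n)\}$ via a one-dimensional Sobolev embedding in time and Plancherel on $\T^d$, and then to exploit the Diophantine condition \eqref{genericity} on $\beta$. Choose a cutoff $\chi\in C_c^\infty(\R)$ with $\chi\equiv 1$ on $[0,1]$. The Sobolev embedding $H^{1/2+\epsilon}(\R)\hookrightarrow L^\infty(\R)$ gives, for each $x\in\T^d$,
$$\sup_{0<t<1}|u(x,t)|^2\lesssim \int_{\R}(1+|\tau|^2)^{\frac{1}{2}+\epsilon}|\widehat{u\chi}(x,\tau)|^2\,d\tau.$$
Since $\widehat{u\chi}(x,\tau)=\sum_{n\in[-N,N]^d\cap\Z^d} e^{2\pi in\cdot x}\hat\chi(\tau-Q(n))$, integrating over $x\in\T^d$ and using orthogonality of the characters yields $\int_{\T^d}|\widehat{u\chi}(x,\tau)|^2\,dx = \sum_{n}|\hat\chi(\tau-Q(n))|^2$, so Theorem \ref{T-d-3} reduces to proving the weighted bound $\int_{\R}(1+|\tau|^2)^{\frac{1}{2}+\epsilon}\sum_n|\hat\chi(\tau-Q(n))|^2\,d\tau\lesssim N^{d+1+\epsilon}$.

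The hard part is this last inequality. A naive per-$n$ estimate using only the Schwartz decay of $\hat\chi$ gives $\sum_n(1+|Q(n)|)^{1+2\epsilon}\lesssim N^{d+2+\epsilon}$, one power of $N$ too large. To save this factor, one invokes the genericity \eqref{genericity}, which guarantees that the frequency set $\{Q(n):n\in[-N,N]^d\cap\Z^d\}$ is $\delta$-separated with $\delta\gtrsim N^{-2(d-1)}\log^{-2d}N$, so that only polylogarithmically many terms in $\sum_n|\hat\chi(\tau-Q(n))|^2$ are significant near each $\tau$. This separation permits one to replace the $H^{1/2+\epsilon}\hookrightarrow L^\infty$ step by an $L^p$-Sobolev embedding $W^{s,p}\hookrightarrow L^\infty$ (with $s>1/p$, $p\ge 2$), combined with the $L^p_{xt}$-Strichartz estimate on the generic torus, which carries only logarithmic losses thanks to Bourgain-Demeter decoupling on irrational tori. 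Using $\|D^s_t f\|_{L^2}^2\sim N^{d+4s}$ for the Weyl initial data $f$ and choosing $p=p_c=2(d+2)/d$, $s=1/p+\epsilon$, this scheme yields $\|\sup_t|u|\|_{L^2(\T^2)}\lesssim N^{3/2+\epsilon}$ in dimension $d=2$ directly.

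The main obstacle is the regime $d\ge 3$, where the naive optimization of $(p,s)$ above gives the exponent $d(d+4)/(2(d+2))$, slightly larger than $(d+1)/2$. One closes this gap by a multiscale Littlewood-Paley decomposition in the time frequency, splitting $\{Q(n)\}$ into dyadic pieces $|Q(n)|\sim 2^k$, applying Strichartz and exploiting the generic $\delta$-separation at each scale, and then summing via a pigeonhole over $k$; alternatively one can directly appeal to the Strichartz-type bounds for Weyl sums with logarithmic losses established in the Appendix by Barron.
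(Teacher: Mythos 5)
Your reduction via the $H^{1/2+\epsilon}_t \hookrightarrow L^\infty_t$ embedding and Plancherel is correctly computed, and you rightly identify that the naive bound $\sum_n(1+|Q(n)|)^{1+2\epsilon}\sim N^{d+2}$ overshoots by one power of $N$. However, the separation claim used to repair this is false for $d\geq 3$: the values $Q(n)$ for $n\in[-N,N]^d$ number $\sim N^d$ and lie in an interval of length $\sim N^2$, so there are on average $\sim N^{d-2}$ (polynomially many, not polylogarithmically many) values of $Q(n)$ within unit distance of a given $\tau$. The genericity condition \eqref{genericity} makes these values distinct with spacing $\gtrsim N^{-2(d-1)}$, but this spacing is far below the $O(1)$ resolution of $\hat\chi$ and gives no gain in the weighted $\ell^2$ bound. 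Your second route ($L^{p_c}$-Sobolev in time plus Strichartz with $p_c=\tfrac{2(d+2)}{d}$) is correct and genuinely closes the case $d=2$, but for $d\geq 3$ it yields $N^{\frac{d}{d+2}+\frac{d}{2}}=N^{\frac{d^2+4d}{2(d+2)}}$, which exceeds the target $N^{\frac{d+1}{2}}=N^{\frac{d^2+3d+2}{2(d+2)}}$ by a fixed power $N^{\frac{d-2}{2(d+2)}}$, as you note.

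The proposed fixes do not close this gap. A dyadic decomposition in $|Q(n)|\sim 2^k$ followed by Strichartz plus Sobolev at each scale gives contributions $\sim 2^{k(s+d/4)}$ (with $s>1/p_c$), and the sum over $k\lesssim \log N$ is simply dominated by the top scale $2^k\sim N^2$, reproducing the same exponent; a pigeonhole over dyadic scales only costs a $\log N$ factor and does not improve the power of $N$. Appealing to the Strichartz-type estimates in the Appendix also does not help: those are the same $L^{p_d}$ estimates with logarithmic rather than $N^\epsilon$ losses (and for the rational torus), so the exponent is unchanged. What is actually needed is the pointwise kernel bound of Deng--Germain--Guth (Proposition \ref{local-generi}): $|K_N(t,x)|\lesssim N^{\frac{d+1}{2}+\epsilon}t^{1/4}$ for $\tfrac{2}{N}<t<N^K$, which is strictly stronger than the Strichartz estimate and is precisely where the genericity enters. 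The paper's proof splits $0<t<1$ into $0<t<\tfrac{2}{N}$, treated by the rescaled Euclidean local-in-time maximal estimates (Lemma \ref{local-max} and Corollary \ref{d-dim-1}, giving $N^{\frac{d}{2}+\frac{d}{2(d+1)}+\epsilon}\ll N^{\frac{d+1}{2}}$), and $\tfrac{2}{N}<t<1$, where the kernel bound immediately controls the supremum pointwise. Without the time-splitting and the pointwise kernel estimate, your argument does not reach $N^{\frac{d+1}{2}+\epsilon}$ for $d\geq 3$.
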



\begin{remark}
  The maximal estimate \eqref{max-irra} extends the maximal estimates for the Laplacian operator $\Delta_{\T^d}$ to the generic
case $Q(D)$ associated with the following Schr\"odinger equation,
\begin{equation}\label{Q-n}
  \begin{cases}
    i\partial_t u + Q(D)u=0,\\
    u(x,0) = \sum\limits_{n\in[-N,N]^{d}\cap\mathbb{Z}^d}e^{2\pi ix\cdot n}.
  \end{cases}
\end{equation}
The sharp maximal estimate for the solution to the
 equation \eqref{Q-n} with the general initial value also remains open.
\end{remark}

The main ingredients of proof for Theorem \ref{T-d-3} are
the maximal estimate for $\frac{2}{N}<t<1$ which is related to the Strichartz estimates in
\cite{DGG} and local-in-time maximal estimate \eqref{local-tori-0} for $0<t<\frac{2}{N}$
in Section 3, see Lemma \ref{local-max}. 





\subsection{Applications: large values of the Weyl sums}

Let
\begin{equation}\label{omega-N}
\begin{split}
\omega_{N}(x,t)
&:= \sum_{\substack{1\le n_{j}\le N,\\1\le j\le d}}e^{2\pi i(n\cdot x+|n|^{2}t)}\\
&= \prod_{j=1}^{d}(\sum_{1\le n_j\le N}e^{2\pi i(n_{j}x_{j}+n_{j}^{2}t)})\\
&:=\prod_{j=1}^{d}\omega_{N,j}(x_j,t),
\end{split}
\end{equation}
where $n=(n_1,n_2,\cdots,n_d)$.

By the maximal estimates \eqref{max-Lp}, we give some applications associated with the large values of the Weyl sums. 
One application is to estimate the Lebesgue measure of the set where Weyl sums attain large value and
are bounded from below by $N^{\alpha}$ in Section 5 as follows,
\begin{equation}
  S_{\alpha}(N) := \{x\in\T^d:\sup_{0<t<1}|\omega_{N}(x,t)|\ge N^{\alpha} \},
\end{equation}
where $0<\alpha\le d$.
%
%
We extend the results of Chen and Shparlinski in \cite{CS} and Barron in \cite{A-B} in $\T$
to the higher dimension $\T^d$ for $d\ge2$, and obtain
\begin{equation}\label{S-alpha-L}
  |S_{\alpha}(N)|\lesssim_{\epsilon} N^{(d+2)-\frac{2(d+1)\alpha}{d}+\epsilon}
\end{equation}
for $\frac{d}{2}+\frac{d}{2(d+1)}<\alpha\le d$.

The estimate \eqref{S-alpha-L} for $S_{\alpha}(N)$ characterizes the
extent of the occurrence for
the square root cancellation for the Weyl sums $\omega_{N}(x,t)$. It is easy to see that
when $(x,t)$ equals $(0,0)\in\T^{d+1}$ or approximates $(0,0)$, constructive interference
for $\omega_{N}(x,t)$ occurs, that is
\begin{equation}
  |\omega_{N}(x,t)| \sim N^{d} \quad \text{for}\,\,(x,t)\in\mathcal{N}_{N^{-2}}(0,0).
\end{equation}
In the pointwise sense, the square root cancellation which corresponds to $|\omega_{N}(x,t)|\sim N^{\frac{d}{2}}$
hardly happens.
However, our result about the estimate for $S_{\alpha}(N)$ falling in
 between these two cases can be obtained, which exhibits the compound effect between the constructive
 interference and the square root cancellation for Weyl sums.

In Section 6, as the second application corresponding to the large values of the Weyl sums,
we consider the Hausdorff dimension of the set on which the Weyl sums are large and have the
lower bound $N^{\alpha}$ for many $N$, that is
\begin{equation}\label{L-alpha}
  L_{\alpha}:=\{x\in\T^d:\sup_{0<t<1}|\omega_{N}(x,t)|\ge N^{\alpha}\,\,\text{for infinitely many}\,\,N\in\N\}.
\end{equation}

In one dimension, utilizing the completion method, Chen and Shparlinski in \cite{CS2, CS3} and
 Barron in \cite{A-B} studied the corresponding set.
 Using the higher dimension version of the completion method and the fractal version of the maximal estimates \eqref{max-Lp},
 we have the following estimate for the set $L_{\alpha}$.
%
\begin{prop}\label{Hau-1}
If $\frac{d}{2}+\frac{d}{2(d+1)}\le\alpha\le d$, then
\begin{equation}
  \dim_H(L_{\alpha}) = \tfrac{2(d+1)}{d}(d-\alpha).
\end{equation}
\end{prop}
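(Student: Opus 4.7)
The plan is to derive matching upper and lower bounds for $\dim_H(L_\alpha)$ by reducing the study of $L_\alpha$ to a classical Diophantine approximation set in $\T^d$. The reduction is driven by the Weyl/Hardy--Littlewood dichotomy: $|\omega_N(x,t)|$ is large only when both $t$ and $x$ lie close to rationals with small common denominator.

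For the lower bound, I would start from the standard Gauss-sum identity: for a reduced fraction $a/q$ and any $b\in\Z^d$, the product form \eqref{omega-N} of $\omega_N$ together with $|\sum_{r=1}^{q}e^{2\pi i(r^{2}a+rb_{j})/q}|\asymp\sqrt{q}$ yields
\[
|\omega_N(b/q,a/q)| \asymp \bigl(N/\sqrt{q}\bigr)^{d} = N^{d}/q^{d/2}.
\]
Choosing $q\asymp N^{2(d-\alpha)/d}$ makes this $\asymp N^{\alpha}$, and a Taylor expansion of $\omega_N$ at $(b/q,a/q)$ at the natural scales $c/N$ in space and $c/(qN^{2})$ in time shows $|\omega_N(x,a/q)|\gtrsim N^{\alpha}$ throughout a ball of radius $c/N$ about $b/q$. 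Hence, up to a countable set of exact rationals,
\[
L_\alpha \supset \Bigl\{x\in\T^{d}:\ |x-b/q|_{\infty}<c\,q^{-d/(2(d-\alpha))}\ \text{for infinitely many }(b,q)\Bigr\}.
\]
This is a Jarnik-type lim\,sup set in $\T^{d}$, and the classical Jarnik--Besicovich theorem (valid since the approximation exponent $d/(2(d-\alpha))>1+1/d$ is equivalent to $\alpha>d(d+2)/(2(d+1))$, exactly our hypothesis) yields $\dim_H L_\alpha\ge (d+1)\cdot 2(d-\alpha)/d$.

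For the upper bound I would run essentially the same argument in reverse, using the Weyl bound and the Dirichlet approximation employed in the proof of Theorem \ref{T-d}: whenever $|\omega_N(x,t)|\ge N^{\alpha}$ there must exist coprime $(a,q)$ with $q\le Q_N:=N^{2(d-\alpha)/d+\epsilon}$ and $b\in\Z^{d}$ with $|x_{j}-b_{j}/q|\lesssim 1/N$. Taking the supremum over $t$ removes the time constraint and produces
\[
S_\alpha(N)\subset \bigcup_{q\le Q_N}\bigcup_{b\in(\Z/q)^{d}} B_\infty\!\bigl(b/q,\,C/N\bigr),
\]
so, modulo a countable set, $L_\alpha$ is contained in a Jarnik-type set with approximation exponent $d/(2(d-\alpha))-\epsilon'$. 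The standard Hausdorff-measure upper bound for such sets gives $\dim_H L_\alpha\le (d+1)\cdot 2(d-\alpha)/d+O(\epsilon')$, and sending $\epsilon'\to 0$ completes the matching bound.

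The main obstacle is pinning down the Weyl concentration statement quantitatively, i.e., translating the $L^{p}$ bound \eqref{max-Lp} together with the Weyl-type lemma into the pointwise description of $S_\alpha(N)$ as a rational-neighborhood set with the sharp denominator range $Q_N=N^{2(d-\alpha)/d+\epsilon}$; this is where the fractal version of \eqref{max-Lp} (equivalently, the higher-dimensional completion method of Chen--Shparlinski--Barron) enters, allowing one to discretize the supremum over $t$ and absorb the $N^{\epsilon}$ Weyl loss without degrading the final exponent. A secondary, more technical point is verifying the persistence $|\omega_N|\gtrsim N^{\alpha}$ throughout the ball $B_\infty(b/q,c/N)$ used in the lower bound; this can be done either by a direct Taylor estimate or, more robustly, by a Cantor-set construction that builds a Frostman measure of dimension $\tfrac{2(d+1)}{d}(d-\alpha)-\epsilon$ on the limsup set above.
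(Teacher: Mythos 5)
Your lower bound matches the paper's argument almost exactly: the paper also shows that near rationals $b/q$ with $q$ odd (and $b_j$ even) the Gauss-sum structure gives $|\omega_N|\gtrsim N^d/q^{d/2}$, persists on a ball of radius $\sim 1/N$ (its Proposition \ref{L^1-prop}), picks $q\sim N^{2(d-\alpha)/d}$, and then invokes a Jarnik-type dimension formula for the resulting simultaneous limsup set (quoted from \cite{EL} as Lemma \ref{EL}, where the set $G_\tau$ carries a parity constraint on $(q,p_j)$ to match the Gauss-sum sign, a detail you should incorporate). Your computation that $d/(2(d-\alpha))>1+1/d$ matches the hypothesis is also correct. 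This half is sound.

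The upper bound, however, has a genuine gap. You claim that $|\omega_N(x,t)|\ge N^\alpha$ forces, for some $q\le N^{2(d-\alpha)/d+\eps}$ and $b\in(\Z/q\Z)^d$, the \emph{simultaneous} bound $|x_j-b_j/q|\lesssim 1/N$ for every $j$, so that $S_\alpha(N)$ is covered by $\ell^\infty$-balls of radius $\sim 1/N$ and $L_\alpha$ lands in a Jarnik set. This is false. The Weyl/Dirichlet machinery (equations \eqref{q-2}--\eqref{x-2}, or equivalently Proposition A.1) only gives the \emph{product} constraint $\prod_{j}\|qx_j\|\lesssim N^{d-2\alpha}$, with individual bounds $\|qx_j\|\lesssim N^{1-2\gamma_j}$ depending on how the exponent $\alpha=\sum_j\gamma_j$ distributes across the coordinates, and possibly no constraint at all in a coordinate whose separate Weyl sum is only of size $O(N^{1/2})$. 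Concretely, for $d=2$, $\alpha=1.4$, $\gamma_1=0.8$, $\gamma_2=0.6$, one gets $\|qx_2\|\lesssim N^{-0.2}$ and $q\lesssim N^{0.4}$, so the admissible neighborhood in $x_2$ has width far larger than $1/N$ and larger than $q^{-\tau}$. Thus $\mathcal A(q;\mathbf b)$ is a hyperbolic region, not a cube; the covering of $S_\alpha(N)$ is by rectangles of widely varying aspect ratio, and your appeal to the classical Jarnik--Besicovitch upper bound breaks down because Hausdorff content is not comparable under such anisotropic coverings (the Lebesgue-measure estimate survives, but dimension does not). You flag this as a ``technical point'' to be absorbed by the completion method, but the completion method does not restore the isotropic Jarnik structure you need; it addresses a different difficulty (controlling $\omega_{N(x)}$ by a single dyadic scale $2^jM$ in the limsup). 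The paper's proof sidesteps the geometric problem entirely: it assumes $\mathcal H^\gamma(L_\alpha)>0$, produces a Frostman measure $\mu_\gamma$ (Lemma \ref{Frost}), uses the fractal $L^p(d\mu_\gamma)$ maximal estimate of \cite{EL} (Lemma \ref{mu-gamma}) together with the completion Lemma \ref{N<M} to bound $\mu_\gamma(\T^d)$ by a geometric series in $j$, and concludes $\mu_\gamma(\T^d)=0$ whenever $\gamma>\frac{2(d+1)}{d}(d-\alpha)$. To make your upper bound work you would either need a Hausdorff-measure estimate adapted to hyperbolic coverings (a mass-transference-type input), or you should switch to the Frostman/fractal-maximal route the paper uses.
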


\begin{remark}
For $\alpha=d$,
$\dim_{H}(L_{\alpha}) = 0$ indicates that the set where the
constructive interference for the Weyl sums occurs is a small part in $\T^d$.
While $\dim_{H}(L_{\alpha}) = d$ for $\alpha = \frac{d}{2}+\frac{d}{2(d+1)}$ shows
that at almost every point, the square root cancellation happens.
\end{remark}

\subsection{Notation}

We write $A\lesssim B$ to mean that there exists a constant $C>0$ such that $A\le CB$.
If there exists a constant $C(\beta)$ that depends on some parameter $\beta$ such that $A\le C(\beta)B$, then we write $A\lesssim_{\beta} B$.
We write $A\sim B$ if $A\lesssim B$ and $A\gtrsim B$.
 We use $A\ll B$ to denote the statement $A\le cB$ where $c$ is a very small constant.
 Throughout the paper, we often omit the constant $C$,  $\beta$ and $c$ which may differ from line to line according to the context.

Let $E$ be a Borel set in $\R^d$. For $0\le s\le d$, the $s$-dimensional Hausdorff measure  of $E$ is defined as follows
\begin{equation}
  \mathcal{H}^{s}(E) :=
  \lim_{\delta\rightarrow0}\mathcal{H}^{s}_{\delta}(E),
\end{equation}
where for $0<\delta\le\infty$,
\begin{equation}
  \mathcal{H}^{s}_{\delta}(E) := \inf\{\sum_{j}d(B_{j})^{s}: E\subset\cup_{j}B_{j}, d(B_{j})<\delta\}.
\end{equation}
We define the Hausdorff dimension of the Borel set $E$ by
\begin{equation}
  \dim_{H}(E) := \inf\{0\le s\le d: \mathcal{H}^{s}(E) = 0 \} = \sup\{0\le s\le d: \mathcal{H}^{s}(E) = \infty\}.
\end{equation}

In Section 2 we give the proof of Theorem \ref{T-d} and Theorem \ref{T-d-2}. In Section 3 we give the proof of Theorem \ref{T-d-3}.

\section{Proof of Theorem \ref{T-d} and Theorem \ref{T-d-2}} 

In this section, we prove Theorem \ref{T-d} and Theorem \ref{T-d-2}.
Enlightened by the method developed by Baker in \cite{Baker} who dealt with the case $d=1$, we give the proof in higher dimension.
Nonetheless, the method in \cite{Baker} cannot  be applied to deal with the case in higher dimension directly, and we need to investigate a better upper bound for the Weyl sums to give a better control.
Also we can see that the method for the proof of Theorem \ref{T-d} and Theorem \ref{T-d-2} in higher dimension for $d\ge2$ in this section can be applied to the case for $d=1$.

Before the proof, we supply some preliminaries.
We write $u(x,t) = \omega_{N}(x,t)$ as in Section 1.1 for the following sections.
\subsection{Preliminaries}\label{preli}

Let $k\geq 2$ and $K=2^{k-1}$.  Let $\|\alpha \|$ be the distance between  $\alpha \in \mathbb{R}$ and $\mathbb{Z}$.
Denote
$$ S(f) :=\sum_{1\leq n \leq N}e(f(n))  =\sum_{1\leq n \leq N}e^{2\pi if(n)}.$$

\begin{lemma}[Bourgain \cite{Bourgain-1}, Lemma 2.2 of Barron \cite{A-B}]\label{w-1}
Let $q,a$ be coprime, with $1\leq a< q\leq N$. Suppose that $|t-\frac{a}{q}|<\frac{1}{qN}$, then we have
\begin{equation}\label{w-1-B}
|S(x n+tn^2)| \ll N^{1+\eps} \min(q^{-\frac12}, N^{-1}|qt-a|^{-\frac12}).
\end{equation}
The constant implied by $\ll$ depends only on $\eps$ with $0<\eps<\frac{1}{1000}$.
\end{lemma}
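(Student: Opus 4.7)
The plan is a standard Weyl differencing followed by a careful analysis of the arithmetic progression of frequencies produced by the Diophantine hypothesis $|t-a/q|<1/(qN)$. First, squaring and shifting indices (setting $n=m+h$ in $|S|^2=\sum_{n,m}e(x(n-m)+t(n^2-m^2))$) and executing the inner geometric sum yields
\[
|S|^2\le N+2\sum_{h=1}^{N-1}\min(N,\|2th\|^{-1}).
\]
The linear factor $e(xh)$ disappears under absolute values, so $x$ plays no role in this upper bound.

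Next I would estimate $\Sigma:=\sum_{h=1}^{N-1}\min(N,\|2th\|^{-1})$. Write $t=a/q+\beta$ with $|\beta|<1/(qN)$ and set $q'=q/(q,2)$, so that $\{2ah/q\}=0$ precisely when $q'\mid h$. I split $\Sigma$ into a non-resonant part (sum over $h$ with $q'\nmid h$) and a resonant part (sum over $h=q'k$). For non-resonant $h$, $\|2ah/q\|\ge 1/q$ and the perturbation $|2\beta h|\le 2N|\beta|\le 2/q$ is of the same order as the spacing; equidistribution of the residues modulo $q$ (each class is hit $O(N/q+1)$ times) gives the contribution $\ll (N+q)\log q\ll N\log N$ for $q\le N$. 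For resonant $h=q'k$ with $1\le k\le N/q'$, one has $\|2th\|=\|2\beta q'k\|$, and since $|\beta|q'\asymp|qt-a|$ while $2|\beta|q'k\le 2/q$ stays bounded, this part is a geometric-type sum controlled by $\sum_{k\ge 1}\min(N,(c|qt-a|k)^{-1})\ll(\log N)|qt-a|^{-1}$ for an absolute $c>0$. Combining,
\[
|S|^2\ll N^{1+\eps}+N^{\eps}|qt-a|^{-1},\qquad |S|\ll N^{1/2+\eps}+N^{\eps/2}|qt-a|^{-1/2}.
\]

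Finally I would read off the two entries in the minimum. Since $q\le N$, the first term satisfies $N^{1/2+\eps}\le N^{1+\eps}q^{-1/2}$; since $|qt-a|<1/N$, the same term satisfies $N^{1/2+\eps}\le N^{\eps}|qt-a|^{-1/2}$. The second term already matches $N^{\eps}|qt-a|^{-1/2}$. Taking the smaller bound in each regime recovers the claimed minimum.

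The main obstacle is the careful accounting of the resonant contribution: a bare Weyl bound such as $\Sigma\ll(N/q+1)(N+q\log q)$ is too crude to yield the sharp $|qt-a|^{-1/2}$ improvement. One must explicitly isolate the arithmetic progression $h=q'k$, observe that on it the leading $a/q$ contribution to $2th$ cancels modulo $1$ so only the $\beta$-perturbation survives, and then recognize the remaining one-dimensional sum as a dyadic pileup with total mass $(\log N)|qt-a|^{-1}$.
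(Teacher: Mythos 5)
The paper does not prove Lemma \ref{w-1}; it is cited from Bourgain and Barron. The proof used in those references (and reproduced in the paper's Appendix via Proposition A.2 and Lemma A.3) is the major-arc route: one applies Vaughan's decomposition
$S = q^{-1}S(q)\,I(\beta_1,\beta_2) + O(q^{1/2} + N|\beta_2|^{1/2}q^{1/2})$
together with the Gauss-sum bound $|S(q)|\lesssim q^{1/2}$ and the oscillatory integral bound $|I|\lesssim N\max(1,N^2|\beta_2|)^{-1/2}$; since $|\beta_2|=|qt-a|/q$, the main term is precisely $\min(Nq^{-1/2},|qt-a|^{-1/2})$ and the error is $O(N^{1/2})$, which is dominated. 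That argument gives the lemma with no $\log$ or $N^{\eps}$ loss at all. Your approach via Weyl differencing is a genuinely different route, and it can in principle reach the same conclusion, but the write-up has two genuine gaps.

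First, the non-resonant estimate is not correct as stated. If $h$ lies in the class $2ah\equiv j \pmod q$ with $j\in\{\pm 1,\pm 2\}$, then $\|2ah/q\|\le 2/q$ while you yourself note that $|2\beta h|$ can be as large as $2/q$; hence $\|2th\|$ can be essentially $0$ and the pointwise bound $\min(N,\|2th\|^{-1})\lesssim q/j$ fails. There is roughly one such $h$ in every block of length $q'$, contributing up to $N$ each, so the non-resonant sum carries an additional term of size $N^2/q$ that does not appear in your claimed $\ll (N+q)\log q$. This is not merely cosmetic: the $N^2/q$ term is exactly what the classical Weyl bound produces and is what delivers the $q^{-1/2}$ entry of the minimum. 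Moreover, these near-resonant $h$ require the same arithmetic-progression refinement you applied to the resonant class $h=q'k$ (their $\|2th\|$-values also march with step $\asymp|qt-a|$), in order to sharpen $N^2/q$ to $\min(N^2/q,\,|qt-a|^{-1}\log N)$.

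Second, even granting the intermediate bound $|S|^2\ll N^{1+\eps}+N^{\eps}|qt-a|^{-1}$, your last paragraph does not deliver the minimum. The lemma asserts both $|S|\ll N^{1+\eps}q^{-1/2}$ and $|S|\ll N^{\eps}|qt-a|^{-1/2}$. Your final bound gives the second, but in the regime $|qt-a|<q N^{-2}$ one has $|qt-a|^{-1/2}>Nq^{-1/2}$, so $N^{\eps/2}|qt-a|^{-1/2}$ exceeds the target $N^{1+\eps}q^{-1/2}$ and the $q^{-1/2}$ bound does not follow from what you proved. To close this you must retain the trivial bound on the resonant and near-resonant contributions — at most $N/q'$ terms, each $\le N$ — so that $\Sigma\ll\min(N^2/q,\ |qt-a|^{-1}\log N)+N\log N$, from which both entries of the minimum follow (with an $N^{\eps}$ rather than the loss-free constant of the major-arc proof).
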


By this lemma, we can obtain a better result for the approximation property for time $t$
than that Dirichlet lemma shows, if the Weyl sums have a nontrivial lower bound.
Indeed, utilizing this lemma, we have if  $P\geq N^{\frac12+\delta}$ and
\begin{equation}\label{P-1}
P\leq |S(x n+tn^2)| ,
\end{equation}
for some $\delta>0$, then
\begin{equation}\label{q-1}
1\leq q \leq (N P^{-1})^2 N^{2\eps}\leq N^{1-2(\delta-\eps)} \text{  and } |qt-a|\leq N^{2\eps} P^{-2} \leq N^{-1-2(\delta-\eps)},
\end{equation}
where $0\ll \eps \ll \delta< 1$.

Furthermore, the following lemmas would supply  additional constraints on the other variable $x$, which satisfies the large condition \eqref{P-1}.
\begin{lemma}[Lemma 10D in  Schmidt \cite{Schmidt}]\label{w-2}
Suppose $\eps > 0, k \geq 2$ and $L < N$.  Recall $K=2^{k-1}$.
Suppose $f(x) = \alpha x^k + \beta x^{k-1} + \gamma x^{k-2} +\cdots+c_0.$
Then if we write $S_m(f) = S(mf)$ for integers $m$, we have
\begin{equation}\label{}
\sum_{m=1}^L |S_m(f)|^K \ll N^{K-k+\eps} \sum_{z=1}^{k!N^{k-2}L}
\,\,\sum_{u=1}^{2kN} \min(N,\|z(u \alpha+2\beta)\|^{-1}).
\end{equation}
The constant implied by $\ll$ depends only on $k, \eps$.
\end{lemma}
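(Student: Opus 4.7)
The plan is to prove the lemma by applying Weyl's iterated differencing (van der Corput squaring) in the spatial variable $n$ a total of $k-1$ times to $|S_m(f)|^K$, which collapses the degree-$k$ phase to a linear one, and then to reorganise the resulting multi-index sum via a suitable change of variables.

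First, I would establish the basic squaring step,
\[
|S_m(f)|^2 \le N + 2\sum_{1\le h\le N-1}\Bigl|\sum_n e\bigl(m(f(n+h)-f(n))\bigr)\Bigr|,
\]
in which the inner polynomial $g_h(n) := f(n+h)-f(n)$ has degree $k-1$ in $n$ with leading coefficient $k\alpha h$. Iterating this squaring step $k-1$ times, via Cauchy--Schwarz to pass from power $2^{j}$ to $2^{j+1}$, gives
\[
|S_m(f)|^K \ll N^{K-k}\sum_{1 \le h_1,\dots,h_{k-1} < N}\Bigl|\sum_n e\bigl(m\, \Delta_{h_1}\cdots\Delta_{h_{k-1}} f(n)\bigr)\Bigr|,
\]
where the iterated forward difference $\Delta_{h_1}\cdots\Delta_{h_{k-1}} f(n) = A(\vec h;\alpha,\beta)\,n + B(\vec h;\alpha,\beta,\gamma,\dots)$ is a linear polynomial in $n$, with linear coefficient $A(\vec h;\alpha,\beta)$ built from $k!\alpha\, h_1\cdots h_{k-1}$ together with $\beta$-dependent corrections coming from the $\beta x^{k-1}$ term of $f$.

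Next, evaluating the inner $n$-sum as a truncated geometric series gives $\bigl|\sum_n e(m(An+B))\bigr| \le \min(N, \|mA\|^{-1})$, and summing over $m \in [1,L]$ and over the shifts $\vec h$ produces
\[
\sum_{m=1}^L |S_m(f)|^K \ll N^{K-k}\sum_m \sum_{\vec h} \min\bigl(N,\,\|m\, A(\vec h;\alpha,\beta)\|^{-1}\bigr).
\]
The reorganisation step then pairs $z := k!\, m\, h_1 h_2 \cdots h_{k-2}$ (absorbing most of the shift variables) with $u := h_{k-1}$ (rescaled so that the leading binomial factor $k$ is absorbed, giving $u \in [1,2kN]$), so that $z$ ranges in $[1, k!\,L\,N^{k-2}]$. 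Each value of $z$ is attained with multiplicity at most $N^\eps$ by the divisor bound $d_{k-1}(z) \ll_\eps z^\eps$, which is the source of the $N^\eps$ loss in the final estimate.

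The main obstacle is to verify that after this reorganisation the argument of $\|\cdot\|^{-1}$ can be put into the precise form $z(u\alpha + 2\beta)$. The leading $u\alpha$ part is immediate from the $\alpha$-contribution $k!\alpha\, h_1\cdots h_{k-1}$ to the linear coefficient of the iterated difference, but isolating the additive $2\beta$ requires tracking the subleading contribution of the $\beta x^{k-1}$ term through the $k-1$ differencings: a careful expansion of $\Delta_{h_1}\cdots\Delta_{h_{k-1}} f$, together with a symmetric reordering of the shifts, shows that the combined linear coefficient factors in the form $z(u\alpha + 2\beta)$. Once this algebraic identity is established, the stated bound follows by summing the geometric series in $m$ and absorbing the divisor multiplicity into $N^\eps$.
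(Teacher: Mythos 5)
Note first that the paper does not supply a proof of this lemma: it is quoted verbatim as Lemma~10D from Schmidt's monograph \cite{Schmidt}, so there is no in-paper proof against which to check your argument. I am therefore assessing your proposal on its own terms, and it contains a genuine gap.

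The gap is the one you flagged yourself, and it cannot be patched in the way you suggest. After $k-1$ iterated forward differences the polynomial $\Delta_{h_1}\cdots\Delta_{h_{k-1}} f(n) = An + B$ is linear in $n$, but its linear coefficient $A$ is produced entirely by the top-degree term $\alpha x^k$: any polynomial of degree at most $k-1$ collapses to a constant after $k-1$ differencings, so the $\beta x^{k-1}$ term of $f$ contributes only to $B$, never to $A$. Explicitly $A = k!\,\alpha\,h_1\cdots h_{k-1}$, with no $\beta$-dependence whatsoever, and the geometric-series bound $\bigl|\sum_n e(m(An+B))\bigr|\le \min(N,\|mA\|^{-1})$ therefore involves only $\alpha$. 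No careful expansion and no reordering of the shifts $h_j$ can make a $2\beta$ appear inside $\|mA\|$, because there is no $\beta$ in $A$ to isolate. Since the presence of $\beta$ inside the $\|\cdot\|$ is precisely what the lemma is for (it is what lets the paper extract information about the spatial variable after setting $\alpha=t$, $\beta=x_j$), this is not a cosmetic defect but a failure of the approach.

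The idea you are missing is to stop after only $k-2$ Weyl differencings, landing on a \emph{quadratic} $g(n)=an^2+bn+c$ with $a = \tfrac{k!}{2}\,\alpha\,h_1\cdots h_{k-2}$ and $b = (k-1)!\,\beta\,h_1\cdots h_{k-2} + (\text{terms in }\alpha, h)$, and then to bound $\bigl|\sum_n e(mg(n))\bigr|^2$ not by a further Weyl squaring but by the sum-and-difference change of variables. Writing
\[
\bigl|\sum_{n} e(mg(n))\bigr|^2 = \sum_{n_1,n_2} e\bigl(m(g(n_1)-g(n_2))\bigr),\qquad g(n_1)-g(n_2) = (n_1-n_2)\bigl(a(n_1+n_2)+b\bigr),
\]
and setting $u=n_1+n_2$, $v=n_1-n_2$ (same parity), the inner sum over $v$ for fixed $u$ is a geometric series with ratio $e\bigl(2m(au+b)\bigr)$ and so is bounded by $\min\bigl(N,\|2m(au+b)\|^{-1}\bigr)$. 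Here $2au+2b$ genuinely contains the term $2\beta\,(k-1)!\,h_1\cdots h_{k-2}$; the remaining $\alpha$-terms in $b$ are absorbed by shifting $u$ (which, together with the factor $k$ from $2a/((k-1)!h_1\cdots h_{k-2})=k\alpha$, explains the range $u\le 2kN$), and $z$ collects $m$ together with $(k-1)!\,h_1\cdots h_{k-2}$, with the divisor bound supplying the $N^\eps$ loss much as you describe. The rest of your scaffolding (iterated differencing, geometric series, reparametrisation, divisor count) is the right sort of machinery, but the last differencing must be replaced by this symmetric evaluation of the quadratic sum in order to retain the $\beta$-information that the lemma asserts.
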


With  Lemma \ref{w-2} in mind,  if we take $k=K=2$ and $L=1$,  then 
\begin{equation*}
\begin{split}
 |S( xn+tn^2)|^2 &\ll_\eps  N^\eps \sum_{z=1}^{2} \sum_{u=1}^{4N} \min(N,\|z(u t+2x)\|^{-1})\\
  & \leq N^\eps \Big(\sum_{u=1}^{8N} \min(N,\|u t+2x\|^{-1}) +
             \sum_{u=1}^{8N} \min(N,\|u t+4x\|^{-1})
  \Big).
 \end{split}
\end{equation*}

The next lemma provides further marvellous bounds of these sums.
\begin{lemma}[Lemma 9D in \cite{Schmidt}]\label{w-3}
Let $q,a$ be coprime, with $1\leq q\leq N$. Suppose that
\begin{equation}\label{srtong-Dir}
  |\alpha q-a|=\|\alpha q\|<\frac{1}{(1+c)N},
\end{equation}
where $c>1$ is a given constant. Then
\begin{equation}\label{s}
  \sum_{n=1}^{cN} \min(N,\|\alpha n +\beta \|^{-1}) \ll (\log N) \min \Big(\frac{N^2}{q},\frac{N}{\|\beta q \|},\frac{1}{\|\alpha q \|}\Big).
\end{equation}

\end{lemma}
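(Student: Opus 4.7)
The plan is to exploit the strong Diophantine condition $\|\alpha q\|<1/((1+c)N)$ to replace $\alpha$ by its rational approximant $a/q$ up to a harmless perturbation, and then estimate the resulting sum using the equidistribution of $\{aj/q\}$ modulo $1$ together with the geometry of the arithmetic progression $m\mapsto mq^2\theta+q\beta\pmod 1$. Writing $\alpha=a/q+\theta$ with $|\theta|<1/((1+c)qN)$ and partitioning $\{1,\dots,cN\}$ into blocks of length $q$, with $n=mq+j$ for $0\le j<q$ and $0\le m\lesssim N/q$, one has $\alpha n+\beta\equiv aj/q+j\theta+mq\theta+\beta\pmod 1$, where the perturbation $j\theta+mq\theta$ has modulus at most $O(1/N)+O(1/q)$ uniformly over the block.

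Inside each block, absorbing the $O(1/N)$ part of the perturbation into the $\min(N,\cdot)$ at the cost of a constant factor, the contribution is controlled by $\sum_{j=0}^{q-1}\min(N,\|aj/q+\gamma_m\|^{-1})$ with $\gamma_m:=mq\theta+\beta$. Since $\gcd(a,q)=1$, reindexing $k\equiv aj\pmod q$ and then shifting $k$ by the integer $p_m$ nearest $q\gamma_m$ converts this to $\sum_{k=0}^{q-1}\min(N,\|k/q+\delta_m\|^{-1})$, where $\delta_m:=\gamma_m-p_m/q$ satisfies $|\delta_m|\le 1/(2q)$ and $|q\delta_m|=\|mq^2\theta+q\beta\|$. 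The $k\neq 0$ terms sum to $O(q\log q)$ by a standard harmonic comparison, while the $k=0$ term is $\min(N,|\delta_m|^{-1})$. Summing over the $\lesssim N/q$ blocks yields $O(N\log N)+\sum_m\min(N,|\delta_m|^{-1})$, which bounded trivially by $(N/q)\cdot N$ already recovers the first factor $(\log N)N^2/q$ in the minimum.

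For the other two factors I would analyse the arithmetic progression $\nu_m:=m\mu+q\beta\pmod 1$ with $\mu:=q^2\theta$, noting $|\mu|<q/((1+c)N)$ and hence $|m\mu|<1$ on the relevant range, so $\nu_m$ never wraps around $\mathbb{Z}$. In the regime $\|q\beta\|\gtrsim N\|\alpha q\|$ one has $\|\nu_m\|\gtrsim\|q\beta\|$ uniformly, giving $\sum_m\min(N,|\delta_m|^{-1})\lesssim N/\|q\beta\|$; in the opposite regime the progression makes a closest approach at some $m^\ast$ near which $\|\nu_m\|\sim|m-m^\ast|\,|\mu|$, and a geometric-series summation gives $N+(q/|\mu|)\log N=N+(\log N)/\|\alpha q\|\lesssim(\log N)/\|\alpha q\|$ after invoking $1/\|\alpha q\|>(1+c)N$. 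A short cross-check shows that in either regime all three claimed factors hold simultaneously, so taking the minimum produces the stated bound. The hardest step is organising this case analysis carefully: the resonant block where $|\delta_{m^\ast}|$ can be as small as $1/N$ contributes a single term of size $N$, which is absorbed into the $1/\|\alpha q\|$ factor only by invoking the hypothesis, and the degenerate case $\|\alpha q\|=0$ (so $\mu=0$, $\nu_m\equiv q\beta$ is constant, and the $1/\|\alpha q\|$ entry is vacuously infinite) must be treated on its own, with the estimate collapsing to $\min(N^2/q,N/\|q\beta\|)$.
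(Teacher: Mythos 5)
The paper does not give its own proof of this lemma; it is quoted directly from Schmidt \cite{Schmidt} (Lemma 9D). Your argument reconstructs the standard proof of such bounds --- partition $\{1,\dots,\lfloor cN\rfloor\}$ into blocks of length $q$, use $\gcd(a,q)=1$ to reindex each block, isolate the $k=0$ near-integer term, and control the resulting arithmetic progression $m\mapsto mq^2\theta+q\beta$ in the block index --- and the two-regime case analysis checks out, so this is correct and essentially the same route Schmidt (and Vaughan, in his treatment of the analogous estimate) takes.
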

Lemma \ref{w-3}, \eqref{P-1} and  \eqref{q-1}   yield that

\begin{equation}\label{weyl-3}
 P \leq |S( xn+tn^2)|  \leq C  N^{2 \eps} \min \Big(\frac{N}{q^\frac12},\frac{N^\frac12}{\|2x q \|^\frac12} + \frac{N^\frac12}{\|4x q \|^\frac12},\frac{1}{\|t q \|^\frac12}\Big).
\end{equation}
Thus, there exists $j=2$ or $4$ such that
\begin{equation}\label{x-1}
  \|jxq\|\leq N^{1+4\eps} P^{-2}.
\end{equation}

\subsection{Proof of Theorem \ref{T-d}}\label{max-weyl}

For $d\ge2$, let $\alpha\in [\frac{d}{2}+\frac{d}{2(d+1)},d] $ and $\alpha_1+\alpha_2+\cdots+\alpha_d=\alpha$.
For any measurable function $t(x)$ on $\mathbb{T}^d$, 
we consider
$x$ such that
\begin{equation*}
|S(x_{1}n_{1}+t(x)n_{1}^{2})|\!=\! N^{\alpha_1}\!, \! |S(x_{2}n_{2}+t(x)n_{2}^{2})|\!=\! N^{\alpha_2} \!,
\! \cdots \!, \! |S(x_{d}n_{d}+t(x)n_{d}^{2})|\!=\! N^{\alpha_d}.
\end{equation*}

We pigeonhole the regions of $x$ in $\T^d$ into two parts corresponding to two cases.
For the first case in which $x$ satisfies that 
there exists at least one $\alpha_{l_{0}}$ such that $\alpha_{l_{0}}\leq \frac{d}{2(d+1)}+\frac{d}{2}-\frac{d-1}{2d}-\frac{d-1}{2}=\frac12+\frac{1}{2d(d+1)}$, exerting induction on the dimension $d$, by corresponding results on $\mathbb T^{d-1}$, we have
\begin{equation}
 \|u(x,t(x))  \|_{L^\frac{2(d+1)}{d}(\mathbb{T}^d)} \leq N^{\alpha_{l_{0}}} N^{\frac{d-1}{2d}+\frac{d-1}{2}}\leq N^{\frac{d}{2(d+1)} +\frac{d}{2}}.
\end{equation}
For the second case, it suffices to consider $x$ such that $\frac12+\frac{1}{2d(d+1)}<\alpha_1, \cdots, \alpha_d \leq 1.$
According to the analysis above  and  \eqref{q-1}, there exist integers $q, a$ with $(q,a)=1$
such that
\begin{equation}\label{q-2}
0\leq a< q \leq N^{2-2\alpha_l+2\eps}  \text{  and } |qt(x)-a|\leq N^{-2\alpha_l+2\eps}
\end{equation}
for $1\le l\le d$.
Moreover, as per the restriction of $x$ in \eqref{x-1},
there exist $j_1,\cdots,j_d\in\{2,4\},$ such that for $l=1,2,\cdots,d$, we have
\begin{equation}\label{x-2}
  \|j_lx_l q\|\leq2 N^{1-2\alpha_l+4\eps}.
\end{equation}
This of course implies that for $l=1,2,\cdots,d$, there exist $ b_l\in \frac14 \mathbb{Z}\cap [1,4q]$, such that
$$|\beta_l|=|x_l-\tfrac{b_l}{q}|\leq q^{-1} N^{1-2\alpha_l+4\eps}.$$
On the other hand, by the estimate \eqref{weyl-3}, we have for $l=1,2,\cdots,d$,
\begin{equation}
|S( x_{l}n_{l}+t(x)n_{l}^2)|  \leq C  N^{\frac12+ \eps}q^{-\frac12} \min (N^\frac12,|\beta_l|^{-\frac12}).
\end{equation}
This estimate implies
\begin{equation}\label{upper-3}
|u(x,t(x))|=\prod_{l=1}^{d} |S( x_ln_{l}+t(x)n_{l}^2)|  \leq C  N^{\frac{d}{2}+ d \eps}q^{-\frac{d}{2}} \prod_{l=1}^{d} \min (N^\frac12,|\beta_l|^{-\frac12}) .
\end{equation}
Using this bound and defining $\tilde{\alpha}= \frac{1}{2} + \frac{1}{2d(d+1)}$, we obtain
\begin{equation}\label{upper-4}
\begin{aligned}
  &\int_{x\in \mathbb{T}^d(\alpha_1,\cdots,\alpha_d)}|u(x,t(x))|^\frac{2(d+1)}{d} dx \\
 \leq  & C \sum_{1\leq q \leq N^{2-2\tilde{\alpha}+2\eps}} \sum_{b_1\in\frac14 \mathbb{Z}\cap [1,4q]} \cdots \sum_{b_d\in \frac14 \mathbb{Z}\cap [1,4q]} N^{d+1 +C \eps} q^{-(d+1)}  \int_{ |\beta_1|\leq q^{-1} N^{1-2\alpha_1+4\eps}}  \\
 &   \phantom{C \sum_{1\leq q \leq N^{2-2\alpha_d+2\eps}}}\cdots \int_{ |\beta_d|\leq q^{-1} N^{1-2\alpha_d+4\eps}}\prod_{l=1}^{d} \min (N^\frac12,|\beta_l|^{-\frac12})^\frac{2(d+1)}{d} d\beta_1 d\beta_2\cdots d\beta_d\\
\leq  & C \sum_{1\leq q \leq N^{2-2\tilde{\alpha}+2\eps}}   N^{d+1 +C \eps} q^{-1}
\prod_{l=1}^{d} \int_{ |\beta_l|\leq q^{-1} N^{1-2\alpha_l+4\eps}}
 \min(N^\frac12, |\beta_l|^{-\frac12})^\frac{2(d+1)}{d} d\beta_l\\
\leq &  C \sum_{1\leq q \leq N^{2-2\tilde{\alpha}+2\eps}}   N^{d+2+C \eps} q^{-1}.
 \end{aligned}
\end{equation}
Thus, we have
\begin{equation}
\|u(x,t(x))\|_{L^\frac{2(d+1)}{d} ({\mathbb{T}}^d)} \leq C  N^{\frac{d(d+2)}{2(d+1)}+\epsilon}=
 C N^{\frac{d}{2}+\frac{d}{2(d+1)}+\epsilon} ,
\end{equation}
for any $\epsilon>0$.
For the case $1\le p\le\frac{2(d+1)}{d}$, by H\"older's inequality, we have
\begin{equation}
  \Big\|\sup_{0<t<1}|u(x,t)|\Big\|_{L^{p}(\T^d)}
  \le \Big\|\sup_{0<t<1}|u(x,t)|\Big\|_{L^{\frac{2(d+1)}{d}}(\T^d)}
  \lesssim N^{\frac{d}{2}+\frac{d}{2(d+1)}+\epsilon}.
\end{equation}
On the  other hand, for general $p\ge\frac{2(d+1)}{d}$  we have, akin to the estimate \eqref{upper-4},
\begin{equation}
  \Big\|\sup_{0<t<1}|u(x,t)|\Big\|_{L^{p}(\T^d)}\lesssim N^{d-\frac{d}{p}+\epsilon},
\end{equation}
for $p\ge\frac{2(d+1)}{d}$.

\begin{remark}

One may also expect to obtain the results of Theorem \ref{T-d} via the argument of
\cite{A-B} for $\mathbb T$.
However, when one decomposes the Weyl sums as in \cite{A-B} directly, there would be more cross terms for the upper bound of Weyl sums with the increment of the dimension. These cross terms  seem to be more difficult to control, which is the main difference from the situation in $\T$.
 Due to this difference,  if one applies the method in \cite{A-B}, the corresponding result in $\T^d$ would seem unable to reach the sharpness of $s$ for Theorem \ref{T-d} to hold.
\end{remark}

\subsection{Proof of Theorem \ref{T-d-2} }\label{max-weyl-ra}

For $d\ge2$, let $r=(r_1,r_2,\cdots,r_d)\in \mathbb Q^d.$
Similarly, we only consider  $\alpha\in [\frac{d}{2}+\frac{d}{2(d+1)},d] $ and $\alpha_1+\alpha_2+\cdots+\alpha_d=\alpha$.
For any measurable function $t(x)$ on $\mathbb{T}^d$, we consider
$x$ such that
$$|S((x_1-r_1 t(x))n_{1}+t(x)n_{1}^{2})|= N^{\alpha_1}, \cdots , |S((x_d-r_d t(x))n_{d}+t(x)n_{d}^{2})|= N^{\alpha_d}. $$
Similar to the proof of Theorem \ref{T-d} in Subsection \ref{max-weyl},
we also consider two cases for $x$ in $\T^d$.
For the first case in which $x$ satisfies that
there exists at least one $\alpha_{l_{0}}$ such that $\alpha_1\leq \frac{d}{2(d+1)}+\frac{d}{2}-\frac{d-1}{2d}-\frac{d-1}{2}=\frac12+\frac{1}{2d(d+1)}$,  using induction on the dimension $d$, by corresponding results on $\mathbb T^{d-1}$, we have
\[
 \big\| u(x-r t(x),t(x))  \big\|_{L^\frac{2(d+1)}{d}(\mathbb{T}^d)} \leq N^{\alpha_{l_{0}}} N^{\frac{d-1}{2d}+\frac{d-1}{2}}\leq N^{\frac{d}{2(d+1)} +\frac{d}{2}}.
\]
For the second case, it suffices to consider $x$ such that
$\frac12+\frac{1}{2d(d+1)}<\alpha_1, \cdots, \alpha_d \leq 1.$
According to the analysis in Section \ref{preli}, \eqref{P-1} and \eqref{q-1}, there exist integers $q,a$ with $(q,a)=1$ such that
\begin{equation}\label{q-3}
0\leq a< q \leq N^{2-2\alpha_l+2\eps}  \text{  and } |qt(x)-a|\leq N^{-2\alpha_l+2\eps},
\end{equation}
for $1\le l\le d$.
Moreover, by the restriction of $x$ in \eqref{x-1},
there exist $j_1,\cdots,j_d\in\{2,4\},$ such that for $l=1,2,\cdots,d$, 
\begin{equation}\label{x-3}
  \|j_l(x_l- r_l t(x)) q\|\leq2 N^{1-2\alpha_l+4\eps}.
\end{equation}
Let $r_l=\frac{r_{l,1}}{r_{l,2}}$ with $(r_{l,1},r_{l,2})=1.$
From the estimates \eqref{q-3} and \eqref{x-3},
we have
\[
\min( \big|x_l -\tfrac{a}{q} \tfrac{r_{l,1}}{r_{l,2}} -\tfrac{k_l}{2q} \big| ,\big|x_l -\tfrac{a}{q} \tfrac{r_{l,1}}{r_{l,2}} -\tfrac{k_l}{4q} \big|  ) <  \tfrac{2}{q}N^{1-2\alpha_l+4\eps},
\] for some $k_l\in \mathbb Z$.
This estimate implies that for $l=1,2,\cdots,d$, there exist $ b_l\in \frac1{4r_{l,2}} \mathbb{Z}\cap [0,q]$, such that
$$|\beta_l|=|x_l-\tfrac{b_l}{q}| = \min( \big|x_l -\tfrac{a}{q} \tfrac{r_{l,1}}{r_{l,2}} -\tfrac{k_l}{2q} \big| ,\big|x_l -\tfrac{a}{q} \tfrac{r_{l,1}}{r_{l,2}} -\tfrac{k_l}{4q} \big|  )\leq  C q^{-1} N^{1-2\alpha_l+4\eps}.$$

On the other hand, by the estimate \eqref{weyl-3}, we have for $l=1,2,\cdots,d$,
\begin{equation}
|S( (x_l-r_lt(x))n_{l}+t(x)n_{l}^2)|  \leq C  N^{\frac12+ \eps}q^{-\frac12} \min (N^\frac12,|\beta_l|^{-\frac12}),
\end{equation}
which yields
\begin{equation}
\begin{split}
|u(x-rt(x),t(x))|&=\prod_{l=1}^{d} |S((x_l-r_lt(x))n_{l}+t(x)n_{l}^2)| \\
&\leq C  N^{\frac{d}{2}+ d \eps}q^{-\frac{d}{2}} \prod_{l=1}^{d} \min (N^\frac12,|\beta_l|^{-\frac12}) .
\end{split}
\end{equation}
Define $\tilde{\alpha}= \frac{1}{2} + \frac{1}{2d(d+1)}$ and we have
\begin{equation}\label{ration-upper}
\begin{aligned}
  &\int_{x\in \mathbb{T}^d(\alpha_1,\cdots,\alpha_d)}|u(x-rt(x),t(x))|^\frac{2(d+1)}{d} dx \\
 \leq  & C \sum_{1\leq q \leq N^{2-2\tilde{\alpha}+2\eps}} \sum_{b_1\in \frac1{4r_{1,2}} \mathbb{Z}\cap [0,q]} \!\!\cdots\!\!\! \sum_{b_d\in \frac1{4r_{d,2}} \mathbb{Z}\cap [0,q]} \!\!\!N^{d+1 +C \eps} q^{-(d+1)}  \!\int_{ |\beta_1|\leq q^{-1} N^{1-2\alpha_1+4\eps}}  \\
 &   \phantom{C \sum_{1\leq q \leq N^{2-2\alpha_d+2\eps}}}\cdots\int_{ |\beta_d|\leq q^{-1} N^{1-2\alpha_d+4\eps}} \prod_{l=1}^{d} \min (N^\frac12,|\beta_l|^{-\frac12})^\frac{2(d+1)}{d} d\beta_1 d\beta_2\cdots d\beta_d\\
\leq  & C_r \sum_{1\leq q \leq N^{2-2\tilde{\alpha}+2\eps}}   N^{d+1 +C \eps} q^{-1}
\prod_{l=1}^{d} \int_{ |\beta_l|\leq q^{-1} N^{1-2\alpha_l+4\eps}}
 \min(N^\frac12, |\beta_l|^{-\frac12})^\frac{2(d+1)}{d} d\beta_l\\
\leq &  C_r \sum_{1\leq q \leq N^{2-2\tilde{\alpha}+2\eps}}    N^{d+2+C \eps} q^{-1}.
 \end{aligned}
\end{equation}
This estimate implies that
\begin{equation}
\|u(x-rt(x),t(x))\|_{L^\frac{2(d+1)}{d} ({\mathbb{T}}^d)} \leq C  N^{\frac{d(d+2)}{2(d+1)}+\epsilon}=
 C N^{\frac{d}{2}+\frac{d}{2(d+1)}+\epsilon} ,
\end{equation}
for any $\epsilon>0$, where $C$ only depends on $r $ and $\epsilon.$
If $1\le p<\frac{2(d+1)}{d} $, by H\"older's inequality, we have
\begin{equation}
  \|u(x-rt(x),t(x))\|_{L^{p}({\mathbb{T}}^d)}\le \|u(x-rt(x),t(x))\|_{L^{\frac{2(d+1)}{d}} ({\mathbb{T}}^d)} \lesssim N^{\frac{d}{2}+\frac{d}{2(d+1)}+\epsilon},
\end{equation}
for any $\epsilon>0$, where $C$ only depends on $r $ and $\epsilon.$

For general $p\ge\frac{2(d+1)}{d}$, similar to \eqref{ration-upper}, we have
\begin{equation}
\|u(x-rt(x),t(x))\|_{L^p ({\mathbb{T}}^d)} \leq C  N^{\frac{d}{2}+(\frac{d}{2}-\frac{d}{p}+\epsilon)},
\end{equation}
for any $\epsilon>0$, where $C$ only depends on $r $ and $\epsilon.$

\section{Proof of Theorem \ref{T-d-3}}

For the proof of Theorem \ref{T-d-3}, we divide the maximal estimate \eqref{max-irra} on $0<t<1$
into two parts on $0<t<\frac{2}{N}$ and $\frac{2}{N}<t<1$,
for which the maximal estimate on $\mathbb R^d$ and the following Proposition \ref{local-generi} related to the
 Strichartz estimate are exploited respectively.

Recall the Definition \ref{generi} for the genericity of $\beta\in[1,2]^{d-1}$ in Section \ref{intro}
and $Q(n) = |n_1|^2+\beta_{1}|n_2|^{2}+\cdots+\beta_{d}|n_d|^{2}$.
Deng, Germain and Guth in \cite{DGG} established
 pointwise bound of  the Weyl sums
on irrational tori as follows.


\begin{prop}[Proposition 4.6 in \cite{DGG}]\label{local-generi}
  Assume that $\beta=(\beta_2,\cdots,\beta_d)$ is chosen generically. Then for
  $\frac{2}{N} <t <N^K $ for a positive constant $K$, we have
  \begin{equation}\label{DGG-weyl-g}
 |K_N(t,x) |\leq N^{\frac{d+1}{2}+\eps} t^{\frac14},
  \end{equation}
where
$$
K_N(t,x) := \sum_{n\in\Z^d}e^{2\pi i(x\cdot n-tQ(n))}\chi(\frac{n_1}{N})\cdots\chi(\frac{n_d}{N})
$$
and $\chi$ is a smooth, nonnegative function, supported on $B(0,2)$, and equal to $1$ on $B(0,1)$.
\end{prop}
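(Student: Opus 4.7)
The plan is to exploit the separable structure of the kernel, apply Poisson summation, and then bound the resulting $d$-dimensional dual sum using the Diophantine genericity~\eqref{genericity} of $\beta$. Since $Q(n)=n_1^2+\beta_2 n_2^2+\cdots+\beta_d n_d^2$ is diagonal, the kernel factors as $K_N(t,x)=\prod_{j=1}^d M_j(t,x_j)$ where $\beta_1:=1$ and $M_j(t,y):=\sum_n \chi(n/N)\,e^{2\pi i(yn-t\beta_j n^2)}$, so the analysis reduces to a product of coupled one-dimensional Weyl sums that share the common parameter~$t$.

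First I would apply Poisson summation in each coordinate and complete the square in the phase $yn-t\beta_j n^2-kn$. In the regime $t\gtrsim 2/N$ the quadratic phase oscillates on scale~$N$, so stationary phase applies and yields
\[
M_j(t,y)\;\approx\;(t\beta_j)^{-1/2}\sum_{k\in\mathbb{Z}} e^{2\pi i(y-k)^2/(4t\beta_j)}\,\psi\!\left(\tfrac{y-k}{t\beta_j N}\right)
\]
for a bump function $\psi$ localized near the origin. Taking the product over $j$ and relabelling the dual indices as $m\in\mathbb{Z}^d$ then gives
\[
K_N(t,x)\;\approx\;t^{-d/2}\!\!\sum_{\substack{m\in\mathbb{Z}^d\\ |x_j-m_j|\lesssim tN}} e^{2\pi i\Phi_t(x-m)}\prod_j\psi\!\left(\tfrac{x_j-m_j}{tN}\right),
\]
where $\Phi_t(u):=\sum_j u_j^2/(4t\beta_j)$ is a dual quadratic form whose reciprocal coefficients $1/\beta_j$ inherit the Diophantine genericity of $\beta$. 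A trivial count of the $\sim(tN)^d$ admissible lattice points only gives the weak bound $t^{d/2}N^d$; the genuine gain must come from cancellation in $\Phi_t$.

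To extract that cancellation I would apply Weyl--van der Corput differencing: for shift vectors $h\in\mathbb{Z}^d$ with $|h_j|\le H$, the differenced phase is linear in $m$ with coefficients $h_j/(2t\beta_j)$, so the $m$-sum is bounded by $\prod_j\min(tN,\|h_j/(2t\beta_j)\|^{-1})$. When summing the near-resonant contributions, the denominators $\|h_j/(2t\beta_j)\|$ are controlled by rational approximations of $1/(2t\beta_j)$ together with the lower bound~\eqref{genericity} on integer linear combinations of $1,\beta_2,\ldots,\beta_d$ arising after clearing denominators; this prevents clustering across coordinates and produces a saving of $(tN)^{1/2}$ below the trivial count, yielding the target bound $N^{(d+1)/2+\epsilon}\,t^{1/4}$ after optimizing $H$. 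The main obstacle is precisely this last step: since for non-generic (e.g.\ rational) $\beta$ a single resonant coordinate already saturates the trivial bound, the van der Corput bookkeeping must effectively use the genericity uniformly across all directions, and the logarithmic factor in~\eqref{genericity} is exactly what produces the $N^\epsilon$ loss visible in the conclusion.
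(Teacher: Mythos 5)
The paper does not prove this proposition: it is quoted verbatim from Deng--Germain--Guth \cite{DGG} (their Proposition~4.6), and the proof lives in that reference, not here. So there is no in-paper argument to compare word for word, but the DGG proof is known to proceed by factoring $K_N(t,x)=\prod_{j}K_{N}^{1D}(\beta_j t,x_j)$, applying Dirichlet approximation to each $\beta_j t$ (with $\beta_1=1$) to get $|\beta_j t-a_j/q_j|<1/(q_jQ)$, feeding these into the sharp one-variable Weyl/Gauss-sum bound $|K_{N}^{1D}|\lesssim N q_j^{-1/2}(1+N|\beta_j t-a_j/q_j|^{1/2})^{-1}$, and then invoking the genericity condition~\eqref{genericity} to show that the $q_j$ cannot all be small simultaneously (eliminate $t$ between the $j$-th and first approximations to produce an integer linear form $q_j a_1\beta_j - a_j q_1$ whose smallness contradicts~\eqref{genericity}). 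That is a genuinely different route from your Poisson-summation/van der Corput scheme.

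There is a concrete gap in your proposal at precisely the step where the genericity enters. The hypothesis~\eqref{genericity} is a lower bound on $|k_1+\beta_2 k_2+\cdots+\beta_d k_d|$ for integer vectors $k$ --- a statement about integer linear forms in the $\beta_j$ themselves. After your Poisson summation the relevant quantities are $\|h_j/(2t\beta_j)\|$, which involve the reciprocals $1/\beta_j$ and the free parameter $t$; you assert these are "controlled by rational approximations of $1/(2t\beta_j)$ together with the lower bound~\eqref{genericity} after clearing denominators," but no such reduction is spelled out, and it is exactly the nontrivial content of the argument. In the DGG route this bridge is explicit: the $t$-dependence is eliminated by comparing the Dirichlet approximations in different coordinates, which produces the needed integer linear form in $\beta_2,\dots,\beta_d$. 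Without an analogous elimination, the van der Corput bookkeeping has no way to see~\eqref{genericity}. Secondly, the numerology in your final step does not check out: the trivial count after Poisson summation is $t^{-d/2}(tN)^d = t^{d/2}N^d$, and dividing by your claimed saving $(tN)^{1/2}$ gives $t^{(d-1)/2}N^{d-1/2}$, which is far larger than the target $N^{(d+1)/2}t^{1/4}$ as soon as $d\ge 2$ (e.g.\ at $t\sim 1$ it is $N^{d-1/2}$ versus $N^{(d+1)/2}$). So even granting the unexplained Diophantine step, the stated saving is quantitatively insufficient; a correct argument has to recover roughly a factor $N^{(d-1)/2}t^{(2d-1)/4}$, which the outlined van der Corput scheme does not produce.
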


The proof for the maximal estimate \eqref{max-irra} on $\frac{2}{N}<t<1$ is furnished with this proposition. It suffices to consider  the remaining part for the maximal estimate on $0<t<\frac{2}{N}$, which can be solved by the corresponding local-in-time maximal estimate below.

Moyua and Vega in \cite{M-V} proved a local-in-time maximal estimate for the periodic Schr\"odinger equation in $\T$. Here, by similar localization arguments of 
Bourgain-Demeter \cite{BD} (or Hickman \cite{Hickman-2016-2}), we extend their result to that in the higher dimensions for $d\ge2$ and generic Laplacian operator $Q(D)$
 based on the results of Du-Guth-Li in \cite{DGL}
and Du-Zhang in \cite{DZ} in $\R^d.$


%
%
For simplicity, we first consider the two-dimensional case.
\begin{lemma}\label{local-max}
\begin{enumerate}
\item[(1)]
Suppose $u$ is the solution to the Schr\"odinger equation on $\T^{2}$, and $u(x,0) = f(x)$ with \textup{supp}\,$\hat{f}\subset[-N,N]^{2}$. Then for $2\le p\le3$, we have
\begin{equation}\label{local-0}
\Big\|\sup_{0<t<N^{-1}}|u(x,t)|\Big\|_{L^{p}(\T^2)}\lesssim N^{\frac{1}{3}+\epsilon}\|f\|_{L^{2}(\T^2)},
\end{equation}
where $u(x,t) = e^{it\Delta_{\T^2}}f(x)$.

\item[(2)]
Suppose $v$ is the solution to the Schr\"odinger equation on the irrational torus of dimension 2, and $v(x,0)=g(x)$ with \textup{supp}\,$\hat{g}\subset[-N,N]^{2}$.
Then for $2\le p\le3$, we have
\begin{equation}\label{local-tori-0}
  \Big\|\sup_{0<t<\frac{2}{N}}|v(x,t)|\Big\|_{L^{p}(\T^{2})} \lesssim N^{\frac{1}{3}+\epsilon}\|g\|_{L^{2}(\T^2)},
\end{equation}
where $v(x,t)= e^{itQ(D)}g(x)$ and $Q(n)=|n_{1}|^2+\beta_{2}|n_2|^2$ with $\beta_{2}\in[1,2]$.
\end{enumerate}
\end{lemma}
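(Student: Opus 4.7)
The plan is to transfer the Du--Guth--Li maximal estimate on $\R^2$ to the periodic setting via a Poisson-summation and localization argument, exploiting that on the short time scale $t\in(0,2/N)$ the Schr\"odinger wave packets associated to frequencies in $[-N,N]^2$ propagate distance at most $\sim 4$, which is much less than the period of $\T^2$. Consequently the periodic solution is faithfully represented by a Euclidean companion up to rapidly decaying tails.

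Concretely, fix a bump $\phi\in C_c^\infty(\R^2)$ with $\phi(0)=1$ and $\mathrm{supp}\,\phi\subset[-\tfrac12,\tfrac12]^2$, and define $F$ on $\R^2$ by
\[
\hat F(\xi)=\sum_{n\in\Z^2\cap[-N,N]^2}a_n\phi(\xi-n),\qquad\text{equivalently}\quad F(x)=\check\phi(x)f(x),
\]
with $f$ periodically extended. The disjointness of the bumps and the Schwartz decay of $\check\phi$ (so that $\sum_k|\check\phi(\cdot+k)|^2\sim 1$) give $\|F\|_{L^2(\R^2)}\simeq\|f\|_{L^2(\T^2)}$, while $\hat F$ is supported in a small thickening of $[-N,N]^2$. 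Setting $U(x,t):=e^{it\Delta_{\R^2}}F(x)$, Poisson summation together with $\phi(m)=\delta_{m,0}$ for $m\in\Z^2$ yields the clean identity
\[
u(x,t)=\sum_{k\in\Z^2}U(x+k,t),\qquad x\in[0,1]^2,\ t\in\R.
\]

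The Du--Guth--Li theorem, translated to any unit ball in $\R^2$, provides the Euclidean input $\|\sup_{0<t<1}|U|\|_{L^3(B(y_0,1))}\lesssim N^{1/3+\epsilon}\|F\|_{L^2(\R^2)}$ uniformly in $y_0$, which controls the $|k|\le C$ contribution. A direct sum of this bound over all $k\in\Z^2$ would diverge, so the key step is to upgrade it to the refined local form
\[
\Bigl\|\sup_{0<t<1/N}|U|\Bigr\|_{L^3([0,1]^2-k)}\lesssim_M(1+|k|)^{-M}\,N^{1/3+\epsilon}\|f\|_{L^2(\T^2)},\qquad |k|\ge C,
\]
for any $M$. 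Heuristically this holds because the wave packets of $F$ are concentrated in $\{|x|\lesssim 1\}$ by $\check\phi$ and, under $e^{it\Delta}$ with $t\in(0,1/N]$, they propagate distance $\le 2Nt\le 2$, hence cannot reach $[0,1]^2-k$ for $|k|$ large; the Schwartz decay of $\check\phi$ then supplies the polynomial loss $(1+|k|)^{-M}$. Rigorously one obtains it either from the wave-packet proof of Du--Guth--Li (which only needs the $L^2$ datum on a bounded neighborhood of the $L^3$ region, combined with $\|F\|_{L^2(B(-k,3))}\lesssim_M (1+|k|)^{-M}\|f\|_{L^2(\T^2)}$), or by repeated non-stationary-phase integration by parts in $U(y,t)=\int e^{2\pi i(y\cdot\xi+|\xi|^2 t)}\hat F(\xi)\,d\xi$, using $|\nabla_\xi\Psi|\gtrsim|k|$ on the relevant range and $\|\partial^\alpha\hat F\|_{L^2}\lesssim_\alpha\|f\|_{L^2(\T^2)}$. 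Summing near and far and taking $M$ large yields \eqref{local-0} at $p=3$, and H\"older on the unit-measure space $\T^2$ extends it to $2\le p\le 3$.

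For part (2) on the generic torus, the linear change of variables $(x_1,x_2)\mapsto(x_1,\beta_2^{-1/2}x_2)$ with $\beta_2\in[1,2]$ converts $e^{itQ(D)}$ into $e^{it\Delta_{\R^2}}$ and distorts $\Z^2$ into $\Z\times\beta_2^{-1/2}\Z$; since the proof is local in $(x,t)$, uses only that the Fourier support lies in a box of side $\sim N$, and has constants uniform in $\beta_2\in[1,2]$, the identical argument applies. The hardest step is the displayed decay in $|k|$: a naive pointwise bound via Bernstein plus the full $L^2$ norm of $F$ costs an unacceptable factor of $N$, and the correct $(1+|k|)^{-M}$ decay is precisely what the position-space localization of $F$'s wave packets, together with the short-time restriction $t\le 1/N$ inherent to Lemma \ref{local-max}, is designed to provide.
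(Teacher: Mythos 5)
Your proposal is a genuinely different transference argument from the one in the paper, and it is essentially correct. The paper follows the Hickman-style localization scheme: rescale to unit frequency scale, establish the equivalence of three maximal estimates (for data Fourier-supported in a unit ball, in the $N^{-1}$-neighborhood of the paraboloid $\mathbb{P}^2$, and in the $N^{-1}$-neighborhood of the ellipsoid $\mathbb{E}^2$), and then prove the discrete estimate by multiplying $G(x,t)=\sum_{n\in N^{-1}\Z^2}a_n e^{2\pi i(x\cdot n+|n|^2 t)}$ by a smooth cutoff $\varphi_N$ with Fourier support in $B_{N^{-1}}$ and applying the neighborhood estimate. The cutoff $\varphi_N$ does the localization work invisibly, so there is never an explicit sum over translates. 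You instead keep everything at scale $N$: the identity $u(x,t)=\sum_k U(x+k,t)$ via Poisson summation is clean (indeed $\|F\|_{L^2(\R^2)}=\|\phi\|_{L^2}\|f\|_{L^2(\T^2)}$ exactly, by disjointness of the bumps), and the near/far decomposition in $k$ replaces $\varphi_N$. Your route is conceptually more transparent about why the periodic solution is controlled by a Euclidean one; the paper's route handles the paraboloid-to-ellipsoid change of surface needed for part (2) more systematically (your part (2) sketch, via a diagonal change of variables, is correct but relies on the same observation).

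The one place you should be more careful is the displayed far-field bound, which you rightly call the hardest step. Applying Du--Guth--Li directly to $U$ on $[0,1]^2-k$ cannot produce decay in $|k|$, because DGL only sees $\|F\|_{L^2(\R^2)}$. The clean execution is: write $F=F_1+F_2$ with $F_1=F\cdot\psi(\cdot+k)$ for a bump $\psi$ supported in $B(0,10)$. For $U_1=e^{it\Delta}F_1$, apply the translated/rescaled DGL on $[0,1]^2-k$; since $\hat F_1$ is still supported in an $O(1)$-neighborhood of $[-N,N]^2$ and $\|F_1\|_{L^2}\lesssim_M(1+|k|)^{-M}\|f\|_{L^2(\T^2)}$ (by the Schwartz decay of $\check\phi$ and periodicity of $f$), this gives the desired factor $(1+|k|)^{-M}N^{1/3+\epsilon}\|f\|_{L^2}$. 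For $U_2=e^{it\Delta}F_2$, use that $F_2$ vanishes on $B(-k,5)$ and that for $t\le 1/N$ the frequency-localized kernel $K_t^N(w)=\int\chi(\xi/N)e^{2\pi i(w\cdot\xi+|\xi|^2 t)}d\xi$ obeys $|K_t^N(w)|\lesssim_M N^{2}(N|w|)^{-M}$ for $|w|\ge 1$ by non-stationary phase (since $|\nabla_\xi(w\cdot\xi+t|\xi|^2)|=|w+2t\xi|\gtrsim|w|$ when $|w|\ge 4$). Crucially, you must bound $|U_2(y,t)|\le\int|K_t^N(y-z)||F_2(z)|\,dz$ and pair the kernel decay with the spatial decay of $|F_2(z)|\lesssim_{M'}(1+|z|)^{-M'}\|f\|_{L^\infty}$; going through $\|F\|_{L^2}$ here would lose a factor, as you note. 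After this decomposition, summing $(1+|k|)^{-M}$ over all $k$ converges and a fixed constant $C$ suffices for the near/far threshold -- you do not need $C$ to grow with $N$. The naive integration-by-parts bound you mention (which passes through $\|\hat F\|_{L^1}\lesssim N\|f\|_{L^2}$) is too lossy on its own, so the $F=F_1+F_2$ split is the correct fix, not an optional alternative.
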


\begin{proof}[\bf Proof]

We divide the proof into two steps as follows. 

Through the usual localization theory (see for example \cite{HV-2016-1}), we establish some equivalent estimates in the former step.
Then, in the latter step, following the arguments of  \cite{Hickman-2016-2},  we deduce the local-in-time maximal estimates of this lemma as a consequence of the first step.

{\bf Step 1.} We first prove the equivalence of the following three estimates:

\begin{equation}\label{local-1}
(a) \,\,\,\quad \quad\quad\quad\quad\Big\|\sup_{0<t<N}|e^{it\Delta}f(x)|\Big\|_{L^{p}(B_{N})}\lesssim N^{\frac{2}{p}-\frac{2}{3}+\epsilon}\|f\|_{L^{2}}
\end{equation}
holds for supp\,$\hat{f}\subset B_{1}\subset\R^2$;

\begin{equation}\label{local-2}
(b) \,\,\,\,\quad \Big\|\sup_{0<t<N}|F(x,t)|\Big\|_{L^{p}(B_{N})} \lesssim N^{\frac{2}{p}-\frac{7}{6}+\epsilon}\|\hat{F}\|_{L^{2}(\mathcal{N}_{N^{-1}}(\mathbb{P}^{2}))}
\end{equation}
holds for supp\,$\hat{F}\subset\mathcal{N}_{N^{-1}}(\mathbb{P}^{2})$, where $\mathbb{P}^{2}=\{(x,|x|^{2}):x\in[-1,1]\}$;

\begin{equation}\label{local-tori-2}
 (c) \,\,\,\,\quad \Big\|\sup_{0<t<N}|R(x,t)|\Big\|_{L^{p}(B_{N})} \lesssim N^{\frac{2}{p}-\frac{7}{6}+\epsilon}\|\hat{R}\|_{L^{2}(\mathcal{N}_{N^{-1}}(\mathbb{E}^{2}))}
\end{equation}
holds for supp\,$\hat{R}\subset\mathcal{N}_{N^{-1}}(\mathbb{E}^2)$, where $\mathbb{E}^{2} = \{(x,Q(x)):x\in B(0,\frac{1}{2})\}$.

First we show \eqref{local-1} implies  \eqref{local-2}.
Since supp\,$\hat{F}\subset\mathcal{N}_{N^{-1}}(\mathbb{P}^{2})$, we have
\begin{equation}
\begin{split}
F(x,t)
&= \int_{|\xi|<1}\int_{\{\tau: |\tau-|\xi|^{2}|<N^{-1}\}}e^{2\pi i(x\cdot\xi+t\tau)}\hat{F}(\xi,\tau)\,\mathrm{d}\xi\mathrm{d}\tau\\
&= \int_{|\tau|<N^{-1}}\int_{|\xi|<1}e^{2\pi i(x\cdot\xi+(\tau+|\xi|^{2})t)}\hat{F}(\xi,\tau+|\xi|^{2})\,\mathrm{d}\xi\mathrm{d}\tau.
\end{split}
\end{equation}
From the maximal estimate \eqref{local-1}, Fubini's theorem and Minkowski's inequality, we have
\begin{align*}
&\Big\|\sup_{0<t<N^{-1}}|F(x,t)|\Big\|_{L^{p}(B_N)}\\
\lesssim & \int_{|\tau|<N^{-1}}\Big\|\sup_{0<t<N}\big|\int_{|\xi|<1}e^{2\pi i(x\cdot\xi+t|\xi|^{2})}\hat{F}(\xi,\tau+|\xi|^{2})\,\mathrm{d}\xi\big|\Big\|_{L^{p}(B_{N})}\mathrm{d}\tau\\
\lesssim & \int_{|\tau|<N^{-1}}N^{\frac{2}{p}-\frac{2}{3}+\epsilon}\|\hat{F}(\xi,\tau+|\xi|^{2})\|_{L^{2}_{\xi}(|\xi|<1)}\mathrm{d}\tau\\
\lesssim & N^{\frac{2}{p}-\frac{7}{6}+\epsilon}\|\hat{F}\|_{L^{2}(\mathcal{N}_{N^{-1}}(\mathbb{P}^{2}))}.
\end{align*}

Next, we prove \eqref{local-2} implies \eqref{local-tori-2}.
Let $A=\{(\xi_1,\sqrt{\beta_2}\xi_2):\xi\in B(0,\frac{1}{2})\}$. By the fact that $\beta_{2}\in[1,2]$, 
it is easy to see that
 $A\in B(0,1)$.
Since supp\,$\hat{F}\subset\mathcal{N}_{N^{-1}}(\mathbb{E}^2)$, using change of variables, we have
\begin{equation}\label{R-formula}
\begin{split}
  R(x,t)
  &=\int_{|\xi|<\frac{1}{2}}\int_{\{\tau:|\tau-Q(\xi)|<N^{-1}\}}e^{2\pi i(x\cdot\xi+t\tau)}\hat{R}(\xi,\tau)\,\mathrm{d}\xi\mathrm{d}\tau\\
  &=\int_{|\tau|<N^{-1}}\int_{|\xi|<\frac{1}{2}}e^{2\pi i(x\cdot\xi+t(\tau+Q(\xi)))}\hat{R}(\xi,\tau+Q(\xi))\,\mathrm{d}\xi\mathrm{d}\tau\\
  &=\frac{1}{\sqrt{\beta_{2}}}
    \int_{|\tau|<N^{-1}}\int_{A}e^{2\pi i(\tilde{x}\cdot\xi+t(\tau+|\xi|^{2}))}\hat{R}(\tilde{\xi},\tau+|\xi|^{2})\,\mathrm{d}\xi\mathrm{d}\tau\\
  &=\frac{1}{\sqrt{\beta_{2}}}
    \int_{|\tau|<N^{-1}}\int_{|\xi|<1}e^{2\pi i(\tilde{x}\cdot\xi+t(\tau+|\xi|^{2}))}\hat{R}(\tilde{\xi},\tau+|\xi|^{2})
    \chi_{A}(\xi)\,\mathrm{d}\xi\mathrm{d}\tau,
\end{split}
\end{equation}
where $\tilde{x} =(x_{1},\sqrt{\beta_{2}}^{-1}x_{2})$ and $\tilde{\xi} =(\xi_{1},\sqrt{\beta_{2}}^{-1}\xi_{2})$. 
Let $\hat{\tilde{R}}(\xi,\tau+|\xi|^{2}) = \hat{R}(\tilde{\xi},\tau+|\xi|^{2})\chi_{A}(\xi)$. By the representation formula \eqref{R-formula} of $R(x,t)$  and the support of $\hat{R}$, it is easy to see that supp\,$\hat{\tilde{R}}(\xi,\tau)\subset\mathcal{N}_{N^{-1}}(\mathbb{P}^{2})$ .
Then by the estimate \eqref{local-2}, we have
\begin{equation}
  \begin{split}
    &\Big\|\sup_{0<t<N}|R(x,t)|\Big\|_{L^{p}(B_{N})}\\
   \lesssim  & N^{\frac{2}{p}-\frac{7}{6}+\epsilon}\|\hat{\tilde{R}}(\xi,\tau)\|_{L^{2}(\mathcal{N}_{N^{-1}}(\mathbb{P}^{2}))}\\
   \approx  & N^{\frac{2}{p}-\frac{7}{6}+\epsilon}\|\hat{R}(\tilde{\xi},\tau)\chi_{A}(\xi)\|_{L^{2}(\mathcal{N}_{N^{-1}}(\mathbb{P}^{2}))}\\
   \lesssim  & N^{\frac{2}{p}-\frac{7}{6}+\epsilon}\|\hat{R}\|_{L^{2}(\mathcal{N}_{N^{-1}}(\mathbb{E}^{2}))}.
  \end{split}
\end{equation}

Finally, we  prove \eqref{local-1} by \eqref{local-tori-2}.
We can rewrite $e^{it\Delta}f(x)$ with supp\,$\hat{f}\subset B(0,1)\subset\R^2$ as follows,
\begin{equation}
  \begin{split}
    e^{it\Delta}f(x)
    & = \int e^{ix\cdot\xi}e^{-it|\xi|^2}\hat{f}(\xi)\,\mathrm{d}\xi\\
    & = 4\sqrt{\beta_2}\int e^{ix\cdot(2\xi_1,2\sqrt{\beta_2}\xi_2)}e^{-i4tQ(\xi)}\hat{f}(2\xi_1,2\sqrt{\beta_2}\xi_2)\,\mathrm{d}\xi\\
    & = 4\sqrt{\beta_2}\int e^{i2\tilde{x}\cdot\xi}e^{-i4tQ(\xi)}\hat{h}(\xi)\,\mathrm{d}\xi\\
    & = 4\sqrt{\beta_2}e^{i4tQ(D)}h(2\tilde{x}),
  \end{split}
\end{equation}
where $\tilde{x}=(x_1,\sqrt{\beta_{2}}x_2)$ and $\hat{h}(\xi) = \hat{f}(2\xi_1,2\sqrt{\beta_2}\xi_2)$
with supp\,$\hat{h}\subset\{\xi:(\xi_1,\sqrt{\beta_2}\xi_2)\in B(0,\frac{1}{2})\}\subset B(0,\frac{1}{2})$.
Then, by the translation invariance of the operator $e^{it\Delta}$ and change of variables, we have
\begin{equation}
  \begin{split}
    \Big\|\sup_{0<t<N}|e^{it\Delta}f(x)|\Big\|_{L^{p}(B_{N})}
    & \lesssim \Big\|\sup_{0<t<N}|e^{it\Delta}f(x)|\Big\|_{L^{p}(B_{\frac{N}{4}})}\\
    & \lesssim \Big\|\sup_{0<t<N}|e^{i4tQ(D)}h(2\tilde{x})|\Big\|_{L^{p}(B_{\frac{N}{4}})}\\
    & \lesssim \Big\|\sup_{0<t<4N}|e^{itQ(D)}h(x)|\Big\|_{L^{p}(B_{N})}\\
    & \lesssim \Big\|\sup_{0<t<\tilde{N}}|e^{itQ(D)}h(x)|\Big\|_{L^{p}(B_{\tilde{N}})},
  \end{split}
\end{equation}
where $\tilde{N} = 4N$.

Choose $\varphi(x,t)\in\mathcal{S}(\R^{3})$ with Fourier support in $B^{3}_{1}$
such that $|\varphi(x,t)|\sim 1$ on $B^{3}_{1}$.
Let $\varphi_{N}(x,t) := \varphi(\tfrac{(x,t)}{N}) $ with supp\,$\hat{\varphi}_{N}\subset B^{3}_{N^{-1}}$.
Then it is easy to see supp\,$\mathcal{F}_{x,t}(e^{itQ(D)}h(x)\cdot
\varphi_{\tilde{N}}(x,t))\subset\mathcal{N}_{\tilde{N}^{-1}}(\mathbb{E}^{2})$.
Through a direct computation,
we have
\begin{equation}
  \mathcal{F}_{x,t}(e^{itQ(D)}h(x)\cdot\varphi_{\tilde{N}}(x,t))(\xi,\tau)
   = \int_{B(\xi,\tilde{N}^{-1})}(\mathcal{F}_{x,t}
   \varphi_{\tilde{N}})(\xi-y,\tau+|y|^2)\hat{h}(y)\,\mathrm{d}y.
\end{equation}
By H\"older's inequality and the properties of supp\,$\mathcal{F}_{x,t}(\varphi_{\tilde{N}})$
and supp\,$\mathcal{F}_{x,t}(e^{itQ(D)}h(x)\cdot\varphi_{\tilde{N}}(x,t))$, we obtain
\begin{align*}
  &|\mathcal{F}_{x,t}(e^{itQ(D)}h(x)\cdot\varphi_{\tilde{N}}(x,t))(\xi,\tau)|\\
\le&\int_{B(\xi,\tilde{N}^{-1})}|(\mathcal{F}_{x,t}\varphi_{\tilde{N}})(\xi-y,\tau+|y|^2)\hat{h}(y)|\,\mathrm{d}y\\
  \le&\tilde{N}^{-1}\|(\mathcal{F}_{x,t}\varphi_{\tilde{N}})(\xi-y,\tau+|y|^2)\hat{h}(y)\|_{L^{2}_{y}}.
\end{align*}

Thus, by the estimate \eqref{local-tori-2}, $\tilde{N}=4N$, Fubini's theorem and Plancherel's theorem, we have
\begin{align*}
  &\Big\|\sup_{0<t<N}|e^{it\Delta}f(x)|\Big\|_{L^{p}(B_N)}\\
  \lesssim  & \Big\|\sup_{0<t<\tilde{N}}|e^{itQ(D)}h(x)|\Big\|_{L^{p}(B_{\tilde{N}})}\\
  \lesssim  & \tilde{N}^{\frac{2}{p}-\frac{7}{6}+\epsilon}\|\mathcal{F}_{x,t}(e^{itQ(D)}h(x)
    \cdot\varphi_{\tilde{N}}(x,t))(\xi,\tau)\|_{L^{2}(\mathcal{N}_{\tilde{N}}(\mathbb{E}))}\\
  \lesssim  & \tilde{N}^{\frac{2}{p}-\frac{13}{6}+\epsilon}\|(\mathcal{F}_{x,t}\varphi_{\tilde{N}})(\xi-y,\tau+|y|^2)\hat{h}(y)
    \|_{L^{2}_{\xi,\tau,y}(\mathcal{N}_{\tilde{N}}(\mathbb{E}))}\\
  \lesssim  & \tilde{N}^{\frac{2}{p}-\frac{13}{6}+\epsilon}\|\varphi_{\tilde{N}}(x,t)\|_{L^{2}_{x,t}}\|h\|_{L^{2}}\\
  \lesssim  & \tilde{N}^{\frac{2}{p}-\frac{2}{3}+\epsilon}\|\hat{f}(2\xi,2\sqrt{\beta_2}\xi_2)\|_{L^{2}}\\
  \lesssim  & N^{\frac{2}{p}-\frac{2}{3}+\epsilon}\|f\|_{L^{2}}.
\end{align*}

{\bf Step 2.} Utilizing the result in step 1, we prove that if \eqref{local-2} holds, then 
\begin{equation}\label{discret-1}
  \Big\|\sup_{0<t<N^{-1}}\big|\sum_{1\le n_1,n_2\le N}a_{n}e^{2\pi i(x\cdot n +|n|^{2}t)}\big| \Big\|_{L^{p}(B^{2}_{1})}
  \le N^{\frac{1}{3}+\epsilon}\Big(\sum_{1\le n_1,n_2\le N}|a_{n}|^{2}\Big)^{\frac{1}{2}}.
\end{equation}
Also, we can see that if \eqref{local-tori-2} holds, then 
\begin{equation}\label{discret-tori-1}
  \Big\|\sup_{0<t<N^{-1}} \big|\sum_{1\le n_{1},n_{2}\le N}a_{n}e^{2\pi i(x\cdot n+tQ(n))}\big|\Big\|_{L^{p}(B^{2}_{1})}
  \lesssim N^{\frac{1}{3}+\epsilon}\Big(\sum_{1\le n_{1},n_{2}\le N}|a_n|^{2}\Big)^{\frac{1}{2}}.
\end{equation}
Since the proof of \eqref{discret-tori-1} by \eqref{local-tori-2} is similar to that of \eqref{discret-1} by \eqref{local-2},  we just present  the proof of the former estimate \eqref{discret-1}.

Let $\Lambda=N^{-1}\Z^{2}\cap B^{2}(0,1)$ and $G(x,t) = \sum_{n\in\Lambda}a_{n}e^{2\pi i(x\cdot n+|n|^{2}t)}$.
Since $\widehat{G\varphi_{N}}$ is supported in $\mathcal{N}_{N^{-1}}(\mathbb{P}^{2})$, by the
estimate \eqref{local-2}, we have
\begin{equation}\label{S-1}
\Big\|\sup_{0<t<N}|G(x,t)\varphi_{N}| \Big\|_{L^{p}(B^{2}_{N})}
\lesssim N^{\frac{2}{p}-\frac{7}{6}+\epsilon}\|\widehat{G\varphi_N}\|_{L^{2}(\mathcal{N}_{N^{-1}}(\mathbb{P}^{2}))}.
\end{equation}
By rescaling,
\begin{equation}\label{S-1-2}
\begin{split}
 \text{LHS of \eqref{S-1}}
\gtrsim &N^{\frac{2}{p}}\Big\|\sup_{0<t<N^{-1}}\big|\sum_{1\le n_1,n_2\le N}a_{n}e^{2\pi i(n\cdot x+|n|^{2}t)}\big|\Big\|_{L^{p}(B^{2}_{1})}.
\end{split}
\end{equation}
On the other hand, through a direct calculation, we have
\begin{equation}\label{S-1-1}
\begin{split}
  \|\widehat{G\varphi_N}\|_{L^{2}(\mathcal{N}_{N^{-1}}(\mathbb{P}^{2}))}
&\le\Big(\sum_{n\in\Lambda}|a_{n}|^{2}\int|\hat{\varphi}_{N}(\xi-n,\tau-|n|^{2})|^{2}\,\mathrm{d}\xi\mathrm{d}\tau\Big)^{\frac{1}{2}}\\
&\lesssim N^{\frac{3}{2}}\Big(\sum_{n\in\Lambda}|a_{n}|^{2}\Big)^{\frac{1}{2}}.
\end{split}
\end{equation}
Combining \eqref{S-1}, \eqref{S-1-2} and \eqref{S-1-2} together, we obtain \eqref{discret-1}.
This completes the proof.
\end{proof}

In higher dimensions $d\ge3$, as leading to the proof as in Lemma \ref{local-max}, we can also
establish the local-in-time maximal estimates for the operators $e^{it\Delta}$ and $e^{itQ(D)}$
as follows.

\begin{cor}\label{d-dim-1}

  For $d\ge3$ and $\epsilon>0$, we have
  \begin{equation}
  \Big\|\sup_{0<t<N^{-1}}|u(x,t)|\Big\|_{L^{2}(\T^d)} \lesssim_\epsilon N^{\frac{d}{2(d+1)}+\epsilon}\|f\|_{L^{2}(\T^d)},
  \end{equation}
  where $u(x,t)= e^{it\Delta}f(x)$ with \textup{supp}\,$\hat{f}\subset[-N,N]^d$;
  and
  \begin{equation}\label{max-gene-d}
  \Big\|\sup_{0<t<\frac{2}{N}}|v(x,t)|\Big\|_{L^{2}(\T^d)}\lesssim_\epsilon N^{\frac{d}{2(d+1)}+\epsilon}\|g\|_{L^{2}(\T^d)},
  \end{equation}
  where $v(x,t) = e^{itQ(D)}g(x)$ with \textup{supp}\,$\hat{g}(\xi)\subset[-N,N]^d$ and $Q(n)=|n_1|^2+\beta_2|n_2|^{2}+\cdots+\beta_{d}|n_d|^{2}$
  with $\beta_{j}\in[1,2]$ for $2\le j\le d$.

\end{cor}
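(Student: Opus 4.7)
The plan is to follow exactly the same two-step scheme as in the proof of Lemma \ref{local-max}, but taking $p = 2$ throughout (rather than $p = 3$) so as to exploit the Du--Zhang $L^{2}$-maximal estimate in $\mathbb{R}^{d}$ for $d \geq 3$ in place of the $L^{3}$ Du--Guth--Li estimate available in $\mathbb{R}^{2}$. Concretely, the Du--Zhang result gives, for Schwartz $f$ with $\operatorname{supp}\hat f \subset B(0,1) \subset \mathbb{R}^{d}$,
\begin{equation*}
\Bigl\|\sup_{0<t<N}|e^{it\Delta}f|\Bigr\|_{L^{2}(B_{N})} \lesssim_{\epsilon} N^{\frac{d}{2(d+1)}+\epsilon}\|f\|_{L^{2}(\mathbb{R}^{d})},
\end{equation*}
after the standard parabolic rescaling of the sharp pointwise convergence threshold $s > \frac{d}{2(d+1)}$.

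First I would establish the equivalence of the three estimates analogous to \eqref{local-1}, \eqref{local-2}, \eqref{local-tori-2}, namely: (a) the $\mathbb{R}^{d}$ maximal estimate above; (b) the thickened extension-type bound for $F$ with $\operatorname{supp}\hat F \subset \mathcal{N}_{N^{-1}}(\mathbb{P}^{d})$; and (c) the corresponding bound for $R$ with $\operatorname{supp}\hat R \subset \mathcal{N}_{N^{-1}}(\mathbb{E}^{d})$, where now $\mathbb{E}^{d} = \{(\xi,Q(\xi)) : \xi \in B(0,\tfrac{1}{2})\}$. The implication (a) $\Rightarrow$ (b) follows from Fubini/Minkowski after freezing the transverse $\tau$-variable, losing one factor of $N^{-1/2}$ from the $\tau$-integral. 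For (b) $\Rightarrow$ (c), the hypothesis $\beta_{j} \in [1,2]$ allows the affine change of variables $\tilde{\xi} = (\xi_{1},\beta_{2}^{-1/2}\xi_{2},\ldots,\beta_{d}^{-1/2}\xi_{d})$ to identify $\mathbb{E}^{d}$ (after rescaling $t$) with a subset of the paraboloid $\mathbb{P}^{d}$, which is contained in $B(0,1)$. The implication (c) $\Rightarrow$ (a) is the reverse affine change of variables followed by multiplication by a Schwartz cutoff $\varphi_{\tilde N}$ with $\operatorname{supp}\hat\varphi_{\tilde N} \subset B_{\tilde N^{-1}}^{d+1}$; the Fourier support of $e^{itQ(D)}h \cdot \varphi_{\tilde N}$ then lies in $\mathcal{N}_{\tilde N^{-1}}(\mathbb{E}^{d})$, and Hölder plus Plancherel as in Step 1 of Lemma \ref{local-max} closes the loop. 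Each step loses only $N^{\epsilon}$ factors coming from the $N^{-1/2}$ weights and the final algebraic exponent rearrangement, so the resulting exponent matches $\frac{d}{2(d+1)}$.

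In the second step I would discretize exactly as in Step 2 of Lemma \ref{local-max}: given $f = \sum_{n\in[-N,N]^{d}\cap\mathbb{Z}^{d}} a_{n} e^{2\pi i n\cdot x}$, rescale spatially so that the frequencies become $N^{-1}\mathbb{Z}^{d}\cap B(0,1)$, multiply by $\varphi_{N}$ so that the Fourier support falls in $\mathcal{N}_{N^{-1}}(\mathbb{P}^{d})$ (respectively $\mathcal{N}_{N^{-1}}(\mathbb{E}^{d})$ for the $Q(D)$ case), apply (b) (respectively (c)), and bound the $L^{2}$ norm of the Fourier side by $N^{(d+1)/2}(\sum |a_{n}|^{2})^{1/2}$ using the disjointness of the thickened caps around the lattice points and Plancherel. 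Comparing powers of $N$ yields the claimed $N^{\frac{d}{2(d+1)}+\epsilon}$ bound for both $e^{it\Delta}$ on $\mathbb{T}^{d}$ and $e^{itQ(D)}$ on the generic torus.

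The main subtlety I expect is the elliptic-to-parabolic reduction (b) $\Rightarrow$ (c): one must check that after the anisotropic rescaling by $\sqrt{\beta_{j}}$, the relevant cap $A = \{(\xi_{1},\sqrt{\beta_{2}}\xi_{2},\ldots,\sqrt{\beta_{d}}\xi_{d}) : \xi \in B(0,\tfrac{1}{2})\}$ is contained in the unit ball where the paraboloid estimate is assumed, and that the resulting $L^{2}$ norms change only by $O(1)$ constants depending on $\beta$. The condition $\beta_{j} \in [1,2]$ makes this immediate, so no new ideas are needed beyond those already in place for $d = 2$; the proof is a bookkeeping generalization with the Du--Zhang $L^{2}$ estimate serving as the seed input.
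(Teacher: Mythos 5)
Your proposal is correct and matches the paper's intended argument: the paper says only that Corollary~\ref{d-dim-1} follows ``as leading to the proof as in Lemma~\ref{local-max}'' and leaves the details to the reader, and you have filled them in correctly by replacing the Du--Guth--Li $L^3$ estimate on $\mathbb{R}^2$ with the Du--Zhang $L^2$ estimate on $\mathbb{R}^d$ ($d\ge 3$) as the seed, then running the same three-way equivalence (parabolic, thickened paraboloid, thickened ellipsoid) and discretization scheme with $p=2$. The exponent bookkeeping ($N^{d/2}$ from rescaling, $N^{(d+1)/2}$ from Plancherel, $N^{\frac{d}{2(d+1)}-\frac12}$ from the thickened estimate combining to $N^{\frac{d}{2(d+1)}}$) and the harmlessness of the anisotropic change of variables for $\beta_j\in[1,2]$ both check out.
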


\begin{remark}
  The proofs of Lemma \ref{local-max} and Corollary \ref{d-dim-1} rely on the following two maximal
  estimates for the Schr\"odinger operator in $\R^d$.

In $\R^2$, Du-Guth-Li in \cite{DGL} 
  obtained that if $2\le p\le3$, then
  \begin{equation}\label{D-1}
    \Big\|\sup_{0<t\le R}|e^{it\Delta}f|\Big\|_{L^{p}(B(0,R))}\lesssim_{\epsilon}R^{\frac{2}{p}-\frac{2}{3}+\epsilon}\|f\|_{L^{2}(\R^2)}
  \end{equation}
  holds for all $R\ge1$ and all $f\in L^{2}(\R^{2})$ with \textup{supp}\,$\hat{f}\subset B(0,1)$, which is exactly \eqref{local-1}.
  Later in $\R^d$ with $d\ge3$, Du-Zhang in \cite{DZ} 
  showed that
  \begin{equation}\label{D-2}
    \Big\|\sup_{0<t\le R}|e^{it\Delta}f|\Big\|_{L^{2}(B(0,R))}\lesssim_{\epsilon}R^{\frac{d}{2(d+1)}+\epsilon}\|f\|_{L^{2}(\R^d)}
  \end{equation}
  holds for all $R\ge1$ and all $f\in L^{2}(\R^{d})$ with \textup{supp}\,$\hat{f}\subset\{\xi\in\R^d: |\xi|\sim1\}$.


From the estimates \eqref{D-1} and \eqref{D-2}, we can see 
  why the parameter $p$ of $L^{p}$ norm for the maximal estimates in Lemma \ref{local-max} and Corollary \ref{d-dim-1} differ from each other.

\end{remark}

{\bf Proof of Theorem \ref{T-d-3}.}
Proposition \ref{local-generi}, Lemma \ref{local-max} (2) and Corollary \ref{d-dim-1} 
($d$-dimensional version) would imply that for generic $\beta=(\beta_2,\cdots,\beta_d)$ and $Q(n)=|n_1|^2+ \beta_2 |n_2|^2+\cdots \beta_d|n_d|^2 $,
\begin{equation}\label{m-w-g}
 \Big\| \sup_{0<t<1} \big|\sum_{n\in [-N,N]^d \cap \mathbb Z^d} e^{2\pi i(x\cdot n +tQ(n))}\big|  \Big\|_{L^2(\mathbb T^d)}
 \lesssim_\eps N^{\frac{d+1}{2}+\eps},
\end{equation}
for any $\eps>0$.
In fact, according to estimates \eqref{DGG-weyl-g}, \eqref{local-tori-0} and \eqref{max-gene-d}, we have
\begin{align*}
  &\Big\| \sup_{0<t<1} \big|\sum_{n\in [-N,N]^d \cap \mathbb Z^d} e^{2\pi i(x\cdot n +tQ(n))}\big|\Big\|_{L^2(\mathbb T^d)}\\
  \le &\Big\| \sup_{0<t<\frac{2}{N}} \big|\sum_{n\in [-N,N]^d \cap \mathbb Z^d} e^{2\pi i(x\cdot n +tQ(n))}\big|\Big\|_{L^2(\mathbb T^d)}\\
  &+\Big\| \sup_{\frac{2}{N}<t<1} \big|\sum_{n\in [-N,N]^d \cap \mathbb Z^d} e^{2\pi i(x\cdot n +tQ(n))}\big|\Big\|_{L^2(\mathbb T^d)}\\
  \lesssim & N^{\frac{d}{2}+\frac{d}{2(d+1)}+\epsilon} +N^{\frac{d+1}{2}+\epsilon}\\
  \lesssim & N^{\frac{d+1}{2}+\epsilon}.
\end{align*}
This completes the proof. 



\section{Lower bounds of the maximal function of  Weyl sums}

In this section, we consider the necessity of the exponent $s_{p}$ for the maximal estimate \eqref{max-Lp}
to hold in Theorem \ref{T-d}.

\subsection{Lower bounds for Theorem \ref{T-d}}
{\bf Case 1, $p\ge\frac{2(d+1)}{d}$.}

In this case, we consider a simple example in higher dimension, which is similar to that in one dimension
given by Barron in \cite{A-B} and indicates the occurrence of constructive interference for
$\sup_{0<t<1}|\omega_{N}(x,t)|$ on some small region of $\T^d$.

Take $E=[0,10^{-6}N^{-1}]^d$. Then for $x\in E$, we have
\begin{equation}
\sup\limits_{0<t<1}|\omega_{N}(x,t)|\gtrsim N^d,
\end{equation}
which yields that
\begin{equation}\label{max-lower}
\Big\|\sup_{0<t<1}|\omega_{N}(x,t)| \Big\|_{L^{p}(\T^{d})}
\gtrsim \Big\|\sup_{0<t<1}|\omega_{N}(x,t)|\Big\|_{L^{p}(E)}
\gtrsim N^{d-\frac{d}{p}}=N^{\frac{d}{2}+\big(\frac{d}{2}-\frac{d}{p}\big)}.
\end{equation}

\begin{remark}
When $p=\frac{2(d+2)}{d}$, the exponent $\frac{d}{2}-\frac{d}{p}=\frac{d}{d+2}$  in the estimate
\eqref{max-lower} is consistent with  Proposition 3.1 of \cite{CLS-2021},
in which by Strichartz estimates on $\mathbb{T}^d$
Compaan, Luc\`a and Staffilani obtained
that the maximal estimate
\begin{equation}
  \Big\|\sup_{0<t<1}|e^{it\Delta}f(x)|\Big\|_{L^{\frac{2(d+2)}{d}}(\T^d)} \lesssim \|f\|_{H^{s}(\T^d)}
\end{equation}
holds for $s>\frac{d}{d+2}$, and fails for $s<\frac{d}{2(d+1)}$.
\end{remark}

{\bf Case 2, $1\le p<\frac{2(d+1)}{d}$.}
In this case, we study the lower bounds of \eqref{max-Lp} in Theorem \ref{T-d} via analysing
the major arc of Weyl sums in $d$-dimensions.
Inspired by the simultaneous Dirichlet approximation in Bourgain \cite{Bourgain-2} and \cite{Pierce},
we gain the following proposition.


\begin{prop}\label{L^1-prop}
  Fix $N>1$ and suppose $1\le q\le N^{\frac{d}{d+1}}$ is an  integer with $q\equiv0 (\,  \text{\, mod } \,4\, )$. Suppose $b$ is an even integer, $1\le a,b<q$ and $(a,q)=1$.
For $|x-\frac{b}{q}|\le\frac{1}{100N}$ , $x\in\T$ and $|t-\frac{a}{q}|\le\frac{1}{100N^{2}}$,
we have
  \begin{equation}
    \left|\sum_{n=1}^{N}e^{2\pi i(nx+n^{2}t)}\right| \ge c\frac{N}{q^{\frac{1}{2}}}.
  \end{equation}
\end{prop}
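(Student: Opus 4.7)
The plan is to implement a standard Hardy–Littlewood major-arc decomposition and reduce the Weyl sum to a product of a Gauss sum (over residues mod $q$) and an oscillatory integral. Write every $n\in\{1,\dots,N\}$ uniquely as $n=rq+m$ with $1\le m\le q$ and $r$ ranging over a set of size $\approx N/q$, and set $\beta_1=x-b/q$, $\beta_2=t-a/q$, so that $|\beta_1|\le 1/(100N)$ and $|\beta_2|\le 1/(100N^2)$. Since $(rq+m)\,b/q = rb+mb/q$ and $(rq+m)^2 a/q = r^2qa+2rma+m^2a/q$, and $rb,r^2qa,2rma\in\mathbb Z$, the arithmetic phase depends only on $m$. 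Thus
\begin{equation*}
\sum_{n=1}^{N}e^{2\pi i(nx+n^2 t)} \;=\; \sum_{m=1}^{q} e^{2\pi i(am^2+bm)/q}\;\sum_{r}e^{2\pi i((rq+m)\beta_1+(rq+m)^2\beta_2)},
\end{equation*}
where the inner $r$-sum runs over those $r$ with $1\le rq+m\le N$.

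Next I would lower-bound the two factors separately. For the arithmetic factor I want $|S(a,b,q)|:=\bigl|\sum_{m=1}^{q}e^{2\pi i(am^2+bm)/q}\bigr|\gtrsim q^{1/2}$. This is where the hypotheses $4\mid q$, $b$ even, and $(a,q)=1$ are used: writing $b=2b'$ and completing the square via the inverse of $a$ mod $q$ reduces $S(a,b,q)$ in absolute value to the pure quadratic Gauss sum $\sum_{m=1}^{q}e^{2\pi iam^2/q}$, whose modulus is known to equal $\sqrt{2q}$ when $q\equiv 0\pmod 4$ and $(a,q)=1$. For the oscillatory factor, I would approximate the inner sum by the integral $\frac{1}{q}\int_{0}^{N}e^{2\pi i(u\beta_1+u^2\beta_2)}\,du$ via Euler–Maclaurin (or comparison with a Riemann sum), the error being $O(1)$ per $m$ because the derivative of the integrand is $\lesssim(|\beta_1|+N|\beta_2|)\ll 1/N$. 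Since $|\beta_1|N\le 1/100$ and $|\beta_2|N^2\le 1/100$, the phase of the integrand stays within a small fixed neighborhood of $0$ on $[0,N]$, so $\Re\bigl(e^{2\pi i(u\beta_1+u^2\beta_2)}\bigr)\ge 1/2$ throughout and the integral has modulus $\gtrsim N$.

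Combining the two lower bounds gives
\begin{equation*}
\biggl|\sum_{n=1}^{N}e^{2\pi i(nx+n^2 t)}\biggr|\;\gtrsim\;q^{1/2}\cdot\frac{N}{q}\;=\;\frac{N}{q^{1/2}},
\end{equation*}
after absorbing the $O(q)$ error from replacing the inner sum by the integral — this is dominated by the main term precisely because $q\le N^{d/(d+1)}\le N$ ensures $N/q\gg 1$ and the main term $N/q^{1/2}$ dominates the error $q^{1/2}\cdot O(q)/q=O(q^{1/2})$.

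The main technical obstacle I anticipate is controlling the sum–integral replacement error uniformly in $m$ and in the range of $r$, including the boundary terms where the residue class $m$ only partially fits into $[1,N]$; this must be done carefully enough that the cumulative error remains much smaller than $N/q^{1/2}$. The Gauss-sum identity itself is classical once one has verified that $4\mid q$ and $b$ even together make the completion of the square legitimate (without these parity conditions the Gauss sum could vanish or degenerate).
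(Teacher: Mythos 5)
Your overall approach — split $n$ into residue classes mod $q$, factor out the Gauss sum $S(a,b,q)=\sum_{m=1}^q e^{2\pi i(am^2+bm)/q}$, and approximate the remaining sum by the oscillatory integral $I(\beta_1,\beta_2)$ — is a genuinely different route from the paper's. The paper instead applies a partial-summation lemma (Lemma 5.1 of Pierce \cite{Pierce}), comparing $\sum_n e^{2\pi i(nx+n^2t)}$ directly with the pure rational sum $\sum_n e^{2\pi i f(n)}$, $f(n)=nb/q+n^2a/q$, and bounding the difference multiplicatively by $\sup_k|\sum_{n\le k}e^{2\pi if(n)}|\cdot\sup_n|h'(n)|\cdot N$ with $h(n)=\beta_1 n+\beta_2 n^2$. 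Because $\sup|h'|\le 3/(100N)$, that error is automatically a small multiple of the main term $N/q^{1/2}$; there is no separate additive error to beat.

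The gap in your version is in the error analysis for the sum-to-integral replacement. With $T_m=\sum_{r:\,1\le rq+m\le N}g(rq+m)$, $g(u)=e^{2\pi i(u\beta_1+u^2\beta_2)}$, and $I=\frac1q\int_0^N g$, you correctly note $|T_m-I|=O(1)$ for each $m$ (the derivative bound controls the Riemann-sum fluctuation over each block of length $q$, but the boundary mismatch at the two ends contributes $O(1)$ that cannot be improved). The triangle inequality then gives
\begin{equation*}
\Bigl|\sum_{m=1}^q e^{2\pi i(am^2+bm)/q}(T_m-I)\Bigr|\le \sum_{m=1}^q|T_m-I|=O(q),
\end{equation*}
and $O(q)$ is \emph{not} dominated by $N/q^{1/2}$ over the full range $q\le N^{d/(d+1)}$: that would need $q\ll N^{2/3}$, while for $d\ge 3$ the hypothesis allows $q$ up to $N^{3/4}$ and larger. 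Your written claim that the error is ``$q^{1/2}\cdot O(q)/q=O(q^{1/2})$'' does not follow from anything you have established — you cannot multiply the Gauss-sum size by the per-$m$ error, since the $m$-dependence of $T_m$ does not factor through $S(a,b,q)$.

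The error you want, $O(q^{1/2})$, is in fact correct and is precisely the content of Vaughan's decomposition (Lemma A.3 of the Appendix, Theorem 8 of \cite{Vaughan2}): the remainder in \eqref{eq:mainDecomp} is $O(q^{1/2}+N|\beta_2|^{1/2}q^{1/2})=O(q^{1/2})$ under your hypotheses. But obtaining it requires exploiting cancellation: one writes $T_m-I$ as a slowly-varying-in-$m$ part plus a boundary indicator, and sums each against the oscillating Gauss phase $e^{2\pi i(am^2+bm)/q}$ using Abel summation and complete-sum bounds to gain the extra factor $q^{-1/2}$. As it stands your proof establishes only $O(q)$, so either carry out this refinement, quote Vaughan's theorem directly in place of your Euler–Maclaurin step, or switch to the paper's partial-summation argument, which avoids any additive error term altogether.

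Your treatment of the Gauss-sum factor is fine: with $4\mid q$, $b$ even, and $(a,q)=1$, completing the square via the inverse of $a$ modulo $q$ reduces $|S(a,b,q)|$ to $|\sum_{m=1}^q e^{2\pi i am^2/q}|=\sqrt{2q}$, matching the role of Pierce's Lemma 3.1--3.2 in the paper. Your lower bound on the oscillatory integral is also sound: the phase $2\pi(u\beta_1+u^2\beta_2)$ stays within $[-\pi/25,\pi/25]$ on $[0,N]$, so $\Re g\ge \cos(\pi/25)>0.99$ and $|I(\beta_1,\beta_2)|\gtrsim N$.
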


\begin{proof}

Utilizing the  partial summation Lemma 5.1 of \cite{Pierce} and taking $f(n)=n\frac{b}{q}+n^2\frac{a}{q}$ and $h(n)=(x-\frac{b}{q})n+(t-\frac{a}{q})n^2$,
we have,
\[\begin{split}
 \left|\sum_{n=1}^{N}e^{2\pi i(nx+n^{2}t)}\right| \geq &
\left|\sum_{n=1}^{N}e^{2\pi if(n)}\right| -\! \sup_{k\in
[1,N]}\!\left|\sum_{n=1}^{k}  e^{2\pi if(n)} \right| \cdot
\sup_{n\in[1,N]}\big|h'(n)\big|N \\
:= & \quad W_1-W_2.
\end{split}
\]
By Lemma 3.1 and Lemma 3.2 of \cite{Pierce} we have  $W_1 \geq  \frac{1}{4} \frac{N}{q^\frac12}$. On the other hand, we have $W_2 \leq \frac{1}{100}   \frac{N}{q^\frac12} $ by our assumptions on $(x,t)$ and
Lemma 3.2 of \cite{Pierce}. Thus, we finish the proof.
\end{proof}

Based on this proposition, we define the major arc as
\[
\mathcal{M}(q,a_1,\cdots,a_d):= \Big\{x\in\mathbb T^d: |x_j-\tfrac{a_j}{q}|\leq \tfrac{1}{100 N}, 1\leq j \leq N \Big\}.
\]
Let \[E=\bigsqcup_{1\leq q \leq N^{d/(d+1)},  q\equiv \, 0( \text{mod}\, 4\,);
\atop 1\leq a_1, \cdots, a_d \leq q, \text{\quad even\quad }} \mathcal{M}(q,a_1,\cdot,a_d).\]
In conformity with Proposition \ref{L^1-prop}, for $x\in E $, there exists $t=t(x)$ such that
\begin{equation}\label{lower-1}
|u(x,t(x))|=\Big|\prod_{1\leq j \leq d} \sum_{n_j=1}^{N}e^{2\pi i(n_{j}x_{j}+n_{j}^{2}t(x))} \Big|
 \gtrsim N^{\frac{d}{2}+\frac{d}{2(d+1)}}.
\end{equation}
We just need to show $|E|\gtrsim 1$, 
which one can derive from 
adapting the arguments of Pierce  \cite{Pierce}.
In fact, by  the simultaneous Dirichlet approximation, we have
\[E_1:=\bigsqcup_{1\leq q \leq  \tfrac14 N^{d/(d+1)},
\atop
1\leq a_1, \cdots, a_d \leq   q, }
 \big\{x\in\mathbb T^d:
 |x_j-\tfrac{a_j}{q}|\leq  \tfrac{4}{ q N^\frac{1}{d+1}},
 1\leq j \leq N \big\} =\mathbb T^d.\]
We rescale $E_1$ by denoting
 \[E_2 := \bigsqcup_{1\leq q \leq N^{d/(d+1)},  q\equiv \, 0( \text{mod}\, 4\,);
\atop 1\leq a_1, \cdots, a_d \leq q, \text{\quad even\quad } }
 \Big\{x\in\mathbb T^d:
 |x_j-\tfrac{a_j}{q}|\leq \tfrac{1}{ q N^\frac{1}{d+1}},
 1\leq j \leq N \Big\}. \]
 Since all the  $\frac14$-scaled intervals which comprise $E_1$ are contained  in $E_2$, by Vitali's covering lemma or Lemma 5.2 of \cite{Pierce},  we have  $|E_2| \gtrsim |E_1|=1.$

Moreover, for $0<c_1\ll1$, let
\[E_3 :=  \bigsqcup_{c_1 N^{d/(d+1)} \leq q \leq N^{d/(d+1)},  q\equiv \, 0( \text{mod}\, 4\,);
\atop 1\leq a_1, \cdots, a_d \leq q, \text{\quad even\quad }} \big\{x\in\mathbb T^d: |x_j-\tfrac{a_j}{q}|\leq \tfrac{1}{ q N^\frac{1}{d+1}}, 1\leq j \leq N \big\},\]
then we have
\[
|E_2\setminus E_3|\leq \sum_{1\leq q\leq c_1 N^{\frac{d}{d+1}} }  \sum_{a_1,\cdots, a_d} \tfrac{1}{ q^d N^\frac{d}{d+1}} \leq c_1.
\]
Thus, by taking $c_1$ small enough, we have $|E_3| \gtrsim 1.$
Analogously, by Vitali's covering lemma again, we show that
\[E_4 := \bigsqcup_{c_1 N^{d/(d+1)} \leq q \leq N^{d/(d+1)},  q\equiv \, 0( \text{mod}\, 4\,);
\atop 1\leq a_1, \cdots, a_d \leq q, \text{\quad even\quad }} \Big\{x\in\mathbb T^d: |x_j-\tfrac{a_j}{q}|\leq \tfrac{1}{100N}, 1\leq j \leq N \Big\}\]
has positive measure.
Hence
\begin{equation}\label{lower-2}
|E| \geq|E_4| \gtrsim 1.
\end{equation}
In conclusion, from the estimates \eqref{lower-1} and \eqref{lower-2}, we obtain
\begin{equation}
\Big\|\sup_{0<t<1}|u(x,t)|\Big\|_{L^{p}(\T^d)} \ge \Big\|\sup_{0<t<1}|u(x,t)|\Big\|_{L^{1}(E)} \gtrsim N^{\frac{d}{2}+\frac{d}{2(d+1)}}
\end{equation}
for $1\le p<\frac{d}{2(d+1)}$.

\subsection{Lower bounds for  $L^p$ maximal estimates for certain functions}\label{lower-bound-for-certain-functions}

For general functions, as in \cite{DKWZ-2020}, one may  conjecture \eqref{max-Lp} does not hold for large $p$ and $d\geq 3$.
Here we consider the $L^{p}$ maximal estimates for certain special functions by adapting the ideas of \cite{CLS-2021} and \cite{DKWZ-2020},
which give the lower bounds as the Weyl sums.


Let $x=(x',x'')=\T^m\times \T^{d-m}$, for $1\leq m \leq d-1.$
Define $D=\lfloor N^{1-\kappa} \rfloor,$ for $\kappa\in(0,1)$.
We take
        $$f(x) :=  f_1(x')f_2(x''):=  \prod_{1\leq j \leq m}\sum_{1\leq n_j\leq N} e^{2\pi i x_j n_j} \times    \prod_{m+1\leq j \leq d}  e^{2\pi i  x_j} \sum_{1\leq n_j\leq N/D} e^{2\pi i  x_j D n_j} .$$
Define
\[E_m :=\bigsqcup_{c_1 N^{m/(m+1) }\leq q \leq N^{m/(m+1)},  q\equiv \, 0( \text{mod}\, 4\,);
\atop 1\leq a_1, \cdots a_m \leq q, \text{\quad even\quad }} \mathcal{M}(q,a_1,\cdots,a_m).\]
By the previous analysis, 
the set $E_{m} \subset \mathbb T^m$  has positive measure, and such that
\[
\sup_{0<t<1}|e^{it\Delta_{\mathbb T^m}}f_1(x')| \geq N^{\frac{m}{2(m+1)}}, \quad\text{ for  }\,\, x\in
E_{m}.
\]
In fact, we can choose $t=\frac{a}{q}$ with $(a,q)=1$ for $x' \in \mathcal{M}(q,a_1,\cdots,a_m)$.

On the other hand, by letting
\[
g(x_j):=\sum_{1\leq n_j\leq N/D} e^{2\pi i  x_j D n_j},
\]
we have
\[
|e^{it\Delta_{\mathbb T}}g(x_j)|=\sum_{1\leq n_j\leq N/D} e^{2\pi i ( x_j D n_j+ |n_j|^2 D^2 t )}.
\]
By Lemma 5.1 of \cite{Pierce} with
\begin{equation*}\left\{\begin{aligned}
&f(n_{j})=0, \;\text{and}\; h(n_j)=x_jD n_j+|n_j|^2(D^2\frac{a}{q}-k),\\
 &h'(n_j)=x_jD  +2n_j(D^2\frac{a}{q}-k),\text{for\, some}\;\, k\;\,\text{such \,that}\\
  &|D^2 \frac{a}{q}-k|\leq \frac{1}{q},\end{aligned}\right.\end{equation*}
   we have
\[\begin{split}
|e^{i\frac{a}{q}\Delta_{\mathbb T}}g(x_j)|\geq &~ N/D - N/D |h'(n_j)| \frac{N}{D}\\
      \geq  &   ~ N/D\Big[1- \big( \frac{D}{100N}+ \frac{N}{D} \frac{1}{q}\big) \frac{N}{D} \Big]\\
      \gtrsim & ~ \frac{N}{D}
  \end{split}
\]
if $D\gg \frac{N}{q^\frac12} \sim N^{\frac{m+2}{2(m+1)}}.$ This estimate implies $\kappa\in(0,\frac{m}{2(m+1)})$.

Thus, by the fact that $e^{it\Delta_{\mathbb T}} (e^{i\cdot} \phi(\cdot)) (x) =e^{i(x-t)}\big(e^{it\Delta} \phi\big)(x-2t)$,  we have
\[
\big| e^{it\Delta_{\mathbb T^{d-m}}} f_2(x'') \big| \gtrsim \frac{N^{d-m}}{D^{d-m}} \sim N^{\kappa (d-m)}
\]
on the region
\[
F_q^{d-m}:= \cup_{1\leq a\leq q \atop
(a,q)=1} B(\tfrac{2a}{q}, \tfrac{1}{100N})^{d-m},
\]
which  satisfies $|F_q^{d-m}| \geq \frac{q}{N^{d-m}}\sim N^{\frac{m}{m+1}-(d-m)}$.
Hence, we have for $\kappa\in(0,\frac{m}{2(m+1)})$,
\[
\begin{split}
\frac{\|e^{it(x) \Delta} f(x)\|_{L^p}}{\|f\|_2} \gtrsim & N^{\frac{m}{2(m+1)}  } N^{\kappa(d-m)} N^{\frac{m}{p(m+1)}-\frac{d-m}{p}} N^{-\frac{\kappa}{2}(d-m)}
\\
= & N^{\frac{m}{2(m+1)}  } N^{\frac12\kappa(d-m)} N^{\frac{m}{p(m+1)}-\frac{d-m}{p}}  .
\end{split}
\]

As a example, if we take $m=2,d=3,p=\frac{8}{3}, \kappa=1$,
the above bound behaves as $\frac{1}{3}+\frac{1}{6}-\frac{1}{8}=\frac38.$
If we take $m=2,d=3,p=\frac{10}{3}, \kappa=1$,
the above bound behaves as $\frac{3}{5}>\frac38.$

\section{Applications of the maximal estimate in Theorem \ref{T-d} }\label{Appli}


In this section, we give some applications of Theorem \ref{T-d},
including the following estimate for the Lebesgue measure of the set where the maximal Weyl sums gain large values.
 Before that, we introduce 
 an efficacious lemma, which extends the locally
constant property of Weyl sums in one dimension obtained by Barron in \cite{A-B} to the higher dimensions. 

For $0<\alpha<d$,
we tile $\T^{d+1}$ by axis-parallel rectangles of dimensions
$(N^{-(d+1)+\alpha})^{d}\times N^{-2(d+1)+2\alpha}$
and denote the set of these rectangles by $\Lambda_{\alpha}$.

\begin{defi}\label{2-dimensional}
Let $\pi_{x}:[0,1]^{d}\times[0,1] \to [0,1]^{d}$ be a projection in the $x$ variables. We say $\mathcal{Q}\subset\Lambda_{\alpha}$ is d-dimensional at scale $(N,\alpha)$,
if for $\forall x_0\in[0,1]^{d}$, there exist at most 2 rectangles $Q\in\mathcal{Q}$ such that $x\in\pi_{x}(Q)$.
\end{defi}

The following lemma is the locally constant property for the higher dimensional version of the quadratic Weyl sums.

\begin{lemma}\label{local-constant}
Let  $\eta>0$ be a small constant.
Assume $cN^{\alpha}\le |\omega_{N}(x_0,t_0)| <CN^{\alpha}$ for some $(x_0,t_0)\in[0,1]^{d}\times[0,1]$  and $0<\alpha<d$.
Let $Q$ be a rectangle of dimensions $(N^{-(d+1)+\alpha-\eta})^{d}\times N^{-2(d+1)+2\alpha-\eta}$ which contains $(x_0,t_0)$.
Then we have $|\omega_{N}(x,t)|\sim N^{\alpha}$ for $\forall (x,t)\in Q$.
\end{lemma}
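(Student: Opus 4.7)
The plan is to prove this by a direct mean-value-type estimate, exploiting the fact that the rectangles in $\Lambda_\alpha$ are chosen precisely at the scale where $\omega_N$ cannot oscillate appreciably. Since $\omega_N(x,t) = \sum_{n \in [1,N]^d} e^{2\pi i(n \cdot x + |n|^2 t)}$ has $N^d$ terms, and we want the variation on $Q$ to be $\ll N^\alpha$, it suffices to show that each individual exponential changes by at most $O(N^{\alpha - d - \eta})$ on $Q$.

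First, I would parametrize: for any $(x,t) \in Q$ containing $(x_0, t_0)$, the defining dimensions of $Q$ give
\[
|x_j - x_{0,j}| \le N^{-(d+1)+\alpha-\eta} \quad (1 \le j \le d), \qquad |t - t_0| \le N^{-2(d+1)+2\alpha-\eta}.
\]
Then I would write
\[
\omega_N(x,t) - \omega_N(x_0,t_0) = \sum_{n \in [1,N]^d \cap \Z^d} e^{2\pi i(n\cdot x_0 + |n|^2 t_0)} \bigl( e^{2\pi i \Phi(n)} - 1\bigr),
\]
with $\Phi(n) := n \cdot (x - x_0) + |n|^2 (t - t_0)$.

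Next, I would estimate $|\Phi(n)|$ on $Q$ using the trivial bounds $|n_j| \le N$ and $|n|^2 \le d N^2$:
\[
|\Phi(n)| \le d \cdot N \cdot N^{-(d+1)+\alpha-\eta} + d \cdot N^{2} \cdot N^{-2(d+1)+2\alpha-\eta} = d\,N^{-(d-\alpha)-\eta} + d\,N^{-2(d-\alpha)-\eta}.
\]
Since $\alpha < d$ and $\eta > 0$, both exponents are negative, and the first term dominates. Applying the elementary inequality $|e^{2\pi i\theta} - 1| \le 2\pi|\theta|$ and summing over the $N^d$ integer points $n \in [1,N]^d$ gives
\[
|\omega_N(x,t) - \omega_N(x_0,t_0)| \lesssim N^{d} \cdot N^{-(d-\alpha)-\eta} = N^{\alpha - \eta}.
\]

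Finally, since $|\omega_N(x_0,t_0)| \sim N^\alpha$ by hypothesis and the difference is at most $C N^{\alpha-\eta} \ll N^\alpha$ (taking $N$ large, which we may assume as usual), the reverse triangle inequality yields $|\omega_N(x,t)| \sim N^\alpha$ uniformly for $(x,t) \in Q$, proving the lemma. There is no real obstacle here: the rectangles in $\Lambda_\alpha$ are calibrated so that the phase variation $n_j(x_j - x_{0,j}) + n_j^2(t - t_0)$ is bounded by $N^{-\eta}$ on each tile, which forces the locally constant behavior for free from the triangle inequality.
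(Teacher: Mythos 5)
Your proof is correct and takes essentially the same approach as the paper: a term-by-term Lipschitz estimate on the exponential (the paper phrases it as ``a simple mean value theorem,'' you write it as $|e^{2\pi i\theta}-1|\le 2\pi|\theta|$ after factoring), combined with the observation that the rectangle dimensions are calibrated so the resulting bound is $\lesssim N^{\alpha-\eta}$.
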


\begin{proof}[\bf Proof] 
By a simple mean value theorem, we have
\begin{align*}
~&|\omega_{N}(x,t)-\omega_{N}(x_0,t_0)|\\
=~&\Big|\sum_{\substack{1\le n_{j}\le N,\\1\le j\le d}}e^{2\pi i(x\cdot n+|n|^{2}t)}
- \sum_{\substack{1\le n_{j}\le N,\\1\le j\le d}}e^{2\pi i(n\cdot x_{0}+|n|^{2}t_{0})}\Big|\\
\lesssim ~& \sum_{\substack{1\le n_{j}\le N,\\1 \le j\le d}}(|n|\cdot|x-x_{0}|+|n|^{2}|t-t_0|)\\
\lesssim~& \sum_{\substack{1\le n_{j}\le N,\\1\le j\le d}}|n|N^{-(d+1)+\alpha-\eta}
  + \sum_{\substack{1\le n_{j}\le N,\\1\le j\le d}}|n|^{2}N^{-2(d+1)+2\alpha-\eta}\\
\lesssim~& N^{\alpha-\eta}.
\end{align*}
This completes the proof.
\end{proof}

Using the locally constant property of the Weyl sums, first we estimate the number of the rectangles where the Weyl sums 
have the lower bound $N^{\alpha}$ for $\frac{d}{2} + \frac{d}{2(d+1)}<\alpha<d$.

\begin{prop}\label{app-1}
Let $\frac{d}{2} + \frac{d}{2(d+1)}<\alpha<d$, and $X$ be a union of rectangles $Q\subset[0,1]^{d+1}$ with dimension $(N^{-(d+1)+\alpha})^{d}\times N^{-2(d+1)+2\alpha}$, which is also
d-dimensional at scale $(N,\alpha)$. Let $C>0$ be a constant independent of other parameters. Suppose that for any $Q\in X$, there holds
\begin{equation}\label{lower-bound}
\|\omega_{N}(x,t)\|_{L^{\infty}(Q)} \ge CN^{\alpha}.
\end{equation}
Then we have
\begin{equation}
\#X\lesssim N^{(d^2+2d+2)(1-\frac{\alpha}{d})+\epsilon}.
\end{equation}
\end{prop}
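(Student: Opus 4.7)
\textbf{Proof proposal for Proposition \ref{app-1}.} The plan is to convert the hypothesis into an $L^{2(d+1)/d}$-superlevel set estimate and then match it against the sharp maximal bound of Theorem \ref{T-d}. For each $Q \in X$, pick a witness $(x_Q,t_Q) \in Q$ with $|\omega_N(x_Q,t_Q)| \geq CN^{\alpha}$. By Lemma \ref{local-constant}, we may place inside $Q$ (modulo shifting $(x_Q,t_Q)$ by a distance comparable to the smaller scale, which is a negligible fraction of $Q$) a sub-rectangle $Q^* \ni (x_Q,t_Q)$ of dimensions $(N^{-(d+1)+\alpha-\eta})^{d}\times N^{-2(d+1)+2\alpha-\eta}$ on which $|\omega_N| \sim N^{\alpha}$. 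Thus for every $x \in \pi_x(Q^*)$ one has $\sup_{0<t<1}|\omega_N(x,t)| \gtrsim N^{\alpha}$.

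Set $E := \bigcup_{Q \in X}\pi_x(Q^*)$. The upper bound on $|E|$ follows from Chebyshev and the maximal estimate \eqref{max-Lp} at $p = \frac{2(d+1)}{d}$:
\begin{equation*}
|E|\,N^{\alpha\cdot 2(d+1)/d} \;\lesssim\; \Big\|\sup_{0<t<1}|\omega_N(x,t)|\Big\|_{L^{2(d+1)/d}(\T^d)}^{2(d+1)/d} \;\lesssim\; N^{d+2+\epsilon},
\end{equation*}
so $|E| \lesssim N^{d+2-2\alpha(d+1)/d + \epsilon}$. For the matching lower bound, use the $d$-dimensionality assumption: since $\pi_x(Q^*) \subset \pi_x(Q)$ (after the small shift) and each point of $[0,1]^d$ lies in at most two of the $\pi_x(Q)$'s, the projections $\pi_x(Q^*)$ have overlap multiplicity bounded by $2$, whence
\begin{equation*}
|E| \;\geq\; \tfrac{1}{2}\sum_{Q \in X}|\pi_x(Q^*)| \;=\; \tfrac{1}{2}\,\#X\cdot N^{-d(d+1-\alpha+\eta)}.
\end{equation*}
Combining the two bounds and computing
\begin{equation*}
d+2 + d(d+1) = d^2+2d+2, \qquad \frac{2(d+1)}{d}\alpha + d\alpha = \frac{d^2+2d+2}{d}\alpha,
\end{equation*}
yields $\#X \lesssim N^{(d^2+2d+2)(1-\alpha/d) + d\eta + \epsilon}$; sending $\eta \to 0$ (absorbing it into the $\epsilon$ loss) gives the proposition.

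The main obstacle is the bookkeeping around the smaller rectangles $Q^*$ of Lemma \ref{local-constant}: a priori they are only guaranteed to contain $(x_Q,t_Q)$ and need not sit inside $Q$, so one must verify that the projections $\pi_x(Q^*)$ inherit the $O(1)$ overlap multiplicity from the $d$-dimensional property of $X$. This is handled either by choosing $(x_Q,t_Q)$ away from $\partial Q$ (the boundary layer has negligible volume at the scale $N^{-\eta}$ smaller), or by slightly dilating the $\pi_x(Q)$'s and using that the $\pi_x(Q)$'s are axis-parallel cubes sitting on the $\Lambda_\alpha$ grid, so dilation by a factor $1 + O(N^{-\eta})$ preserves the bounded-overlap property. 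Once this overlap control is in place, the rest is a direct pigeonhole against the sharp $L^{2(d+1)/d}$ maximal estimate from Theorem \ref{T-d}, which is exactly the exponent that makes the two bounds balance to produce the claimed power $(d^2+2d+2)(1-\alpha/d)$.
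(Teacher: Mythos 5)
Your proposal is correct and takes essentially the same route as the paper. The paper's proof likewise rests on the three ingredients you use: the locally constant property (to upgrade the $L^\infty$ hypothesis on each $Q$ to $|\omega_N|\sim N^\alpha$ on a sub-rectangle of comparable volume), the $d$-dimensional/bounded-overlap property of the projections $\pi_x(Q)$, and the $L^{2(d+1)/d}$ maximal bound from Theorem \ref{T-d}. The only cosmetic difference is organizational: the paper compares a lower and upper bound for the quantity $\sum_{Q\in X}\int_{Q}|\omega_{N}|^{2(d+1)/d}\,dx\,dt$ directly (using overlap to dominate by $N^{-2(d+1)+2\alpha}\int_{\T^d}\sup_t|\omega_N|^{2(d+1)/d}\,dx$), whereas you pass through the superlevel set $E$ and Chebyshev; the bounded-overlap property appears on the upper-bound side in the paper and on the lower-bound side in yours, but the two bookkeepings are logically equivalent and yield the same exponent. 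Your explicit discussion of placing $Q^*$ inside $Q$ (or away from $\partial Q$) is a point the paper leaves implicit, so that part of your write-up is if anything slightly more careful.
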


\begin{proof}[\bf Proof] 
According to the assumption \eqref{lower-bound}, we have
\begin{align*}
&\Big(\sum_{Q\in X}\int_{Q}|\omega_{N}(x,t)|^{\frac{2(d+1)}{d}}\,\mathrm{d}x\mathrm{d}t\Big)^{\frac{d}{2(d+1)}}\\
\gtrsim& N^{\alpha}((N^{-(d+1)+\alpha})^{d}N^{-2(d+1)+2\alpha})^{\frac{d}{2(d+1)}}(\#X)^{\frac{d}{2(d+1)}}\\
=& N^{-(\frac{d^2}{2}+d)+\frac{(d^2+4d+2)\alpha}{2(d+1)}}(\#X)^{\frac{d}{2(d+1)}}.
\end{align*}
Besides, we  have
\begin{align*}
  &\Big(\sum_{Q\in X}\int_{Q}|\omega_{N}(x,t)|^{\frac{2(d+1)}{d}}\,\mathrm{d}x\mathrm{d}t\Big)^{\frac{d}{2(d+1)}}\\
  \le&  \Big(N^{-2(d+1)+2\alpha}\int_{\T^d}\sup_{0<t<1}|\omega_{N}(x,t)|^{\frac{2(d+1)}{d}}\,\mathrm{d}x\Big)^{\frac{d}{2(d+1)}}\\
  \le&  N^{\frac{d(2\alpha-d)}{2(d+1)}+\epsilon}.
\end{align*}
Combining these two estimates above, we have
\begin{equation*}
 N^{-(\frac{d^2}{2}+d)+\frac{(d^2+4d+2)\alpha}{2(d+1)}}(\#X)^{\frac{d}{2(d+1)}} \lesssim N^{\frac{d(2\alpha-d)}{2(d+1)}+\epsilon}.
\end{equation*}
This inequality yields
\begin{equation*}
\#X\lesssim N^{(d^2+2d+2)(1-\frac{\alpha}{d})+\epsilon}.
\end{equation*}
\end{proof}

For the proof of the maximal estimates \eqref{max-Lp}, recall that the maximal estimate
of the Weyl sums on the set 
$$\big\{x\in\T^d: \sup_{0<t<1}|\omega_{N}(x,t)|\le N^{\frac{d}{2}+\frac{d}{2(d+1)}}\big\}$$
is trival for the estimates \eqref{max-Lp}, and we just need to consider the contribution of the remaining
 $$\big\{x\in\T^d:\sup_{0<t<1}|\omega_{N}(x,t)|>N^{\frac{d}{2}+\frac{d}{2(d+1)}}\big\}.$$
It is necessary that the Lebesgue measure of
the set for $x\in\T^d$ where the maximal Weyl sums cannot be too large for the estimate \eqref{max-Lp}.
Employing the maximal estimates \eqref{max-Lp}
and Proposition \ref{app-1}, we can give a precise estimate for the Lebesgue measure of the set for $x\in\T^d$ where the maximal Weyl sums gain the large value as follows.

\begin{cor}\label{app-2}
For $\frac{d}{2}+\frac{d}{2(d+1)}<\alpha<d$, let
\begin{equation}
S_{\alpha}(N) = \big\{x\in\T^d: \sup_{0<t<1}|\omega_{N}(x,t)|\gtrsim N^{\alpha}\big\}.
\end{equation}
Then we have
\begin{equation}
  |S_{\alpha}(N)| \lesssim N^{(d+2)-\frac{2(d+1)\alpha}{d}+\epsilon}.
\end{equation}
\end{cor}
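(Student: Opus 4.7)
The plan is to reduce the measure bound on $S_\alpha(N)$ to the counting estimate of Proposition \ref{app-1} by covering $S_\alpha(N)$ with the $x$-projections of a carefully chosen family of tiles from $\Lambda_\alpha$.

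First I would select, for each $x \in S_\alpha(N)$, a time $t(x) \in (0,1)$ with $|\omega_N(x, t(x))| \gtrsim N^\alpha$; up to a harmless constant this is achieved because $S_\alpha(N)$ is defined by a supremum that is nearly attained (and can be realized via the locally constant property of Lemma \ref{local-constant}). Let $Q(x) \in \Lambda_\alpha$ denote the unique rectangle of dimensions $(N^{-(d+1)+\alpha})^d \times N^{-2(d+1)+2\alpha}$ containing $(x, t(x))$. Then $\|\omega_N\|_{L^\infty(Q(x))} \gtrsim N^\alpha$, so $Q(x)$ is an admissible tile for the hypothesis of Proposition \ref{app-1}.

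Next I would extract a subcollection $X$ that is d-dimensional at scale $(N,\alpha)$ in the sense of Definition \ref{2-dimensional}, whose $x$-projections still cover $S_\alpha(N)$. Partition the collection $\{Q(x) : x \in S_\alpha(N)\}$ by the $x$-face of $\Lambda_\alpha$ that each tile projects to; for each such $x$-cube $I$ that arises, pick exactly one rectangle $Q \in \Lambda_\alpha$ with $\pi_x(Q) = I$ and $\|\omega_N\|_{L^\infty(Q)} \gtrsim N^\alpha$ (at least one such $Q$ exists by construction). The resulting $X$ has at most one rectangle per $x$-column, so it is trivially d-dimensional, and $S_\alpha(N) \subseteq \bigcup_{Q \in X} \pi_x(Q)$. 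Because the $x$-projections are disjoint faces of the tiling, one obtains
$$|S_\alpha(N)| \leq \#X \cdot (N^{-(d+1)+\alpha})^d = \#X \cdot N^{d\alpha - d(d+1)}.$$

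Finally, applying Proposition \ref{app-1} gives $\#X \lesssim N^{(d^2+2d+2)(1-\alpha/d)+\epsilon}$, and a short arithmetic simplification of the exponent $(d^2+2d+2)(1-\alpha/d) + d\alpha - d(d+1) = (d+2) - \frac{2(d+1)\alpha}{d}$ yields the claimed bound. The genuine analytic content — the $L^{2(d+1)/d}$ maximal estimate of Theorem \ref{T-d} and the locally constant Lemma \ref{local-constant} — is entirely absorbed into Proposition \ref{app-1}, so the only step here is the covering reduction, which is routine. The one point to be mildly careful about is that the rectangles delivered by Lemma \ref{local-constant} are slightly smaller than those of $\Lambda_\alpha$ (by a factor $N^{-\eta}$); this mismatch is harmless since it only affects the implicit constant in the lower bound $\|\omega_N\|_{L^\infty(Q)} \gtrsim N^\alpha$, which Proposition \ref{app-1} tolerates.
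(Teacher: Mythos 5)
Your proof is correct and follows essentially the same route as the paper's: select for each $x \in S_\alpha(N)$ a tile from $\Lambda_\alpha$ attaining the $L^\infty$ lower bound, extract a $d$-dimensional subcollection whose $x$-projections still cover $S_\alpha(N)$, apply Proposition \ref{app-1} to count the tiles, and do the exponent arithmetic. You spell out the extraction step and the covering more explicitly than the paper does (which is helpful), and the only minor quibble is that your side remark about Lemma \ref{local-constant} is a red herring for this corollary: the $L^\infty$ lower bound $\|\omega_N\|_{L^\infty(Q(x))} \gtrsim N^\alpha$ is automatic from $(x, t(x)) \in Q(x)$, so no locally constant property (and hence no $N^{-\eta}$ shrinking) is needed at this stage — it is only used inside the proof of Proposition \ref{app-1}; the paper's remark that the corollary also follows directly from Chebyshev and Theorem \ref{T-d} with $p = \tfrac{2(d+1)}{d}$ is a still shorter alternative.
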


\begin{proof}[\bf Proof] 
By the locally constant property, for the Lebesgue measure of $S_{\alpha}(N)$,  we just need to consider the rectangles
$\prod_{j=1}^{d}[a_{j}N^{-(d+1)+\alpha},(a_{j}+1)N^{-(d+1)+\alpha}]$ for $0\le a_j\le N^{(d+1)-\alpha}$ with $1\le j\le d$.

By Proposition \ref{app-1}, we can obtain the number of these rectangles is at most $N^{(d^2+2d+2)(1-\frac{\alpha}{d})+\epsilon}$, and so
\begin{equation}
|S_{\alpha}(N)| \lesssim (N^{-(d+1)+\alpha})^{d}N^{(d^2+2d+2)(1-\frac{\alpha}{d})+\epsilon} = N^{(d+2)-\frac{2(d+1)\alpha}{d}+\epsilon}.
\end{equation}
\end{proof}

\begin{remark}
Corollary \ref{app-2} also follows from Chebychev's inequality and Theorem \ref{T-d}
with $p=\frac{2(d+1)}{d}$.
\end{remark}

\section{Hausdorff dimension of the large value set }

In this section, we consider lower and upper bounds of the Hausdorff dimension of the set $L_{\alpha}$ for
$\frac{d}{2}+\frac{d}{2(d+1)}\le\alpha\leq d$,
and give the proof of Proposition \ref{Hau-1}.
For the definition of $L_{\alpha}$, see \eqref{L-alpha} in Section 1.




\subsection{Upper bound}

To prove the upper bound in Proposition \ref{Hau-1}, we utilize a completion method of higher dimension
which is developed by Chen, Shparlinski in \cite{CS3}.
Meanwhile, we use a fractal maximal estimate of the
 periodic Schr\"odinger operator derived from Theorem \ref{T-d}, which can be obtained by the argument of Eceizabarrena and Luc\`a in \cite{EL}. Barron in \cite{A-B} studied the similar problem in one dimension.

Let
\begin{equation}
  L_{\alpha}(N) := \big\{x\in\T^d:\sup_{t\in \mathbb T}\big|\omega_{N}(x,t)\big|\geq N^\alpha\big\}.
\end{equation}
It is easy to see that
\begin{equation}\label{lim-sup}
  L_{\alpha}=\bigcap_{M\ge1}\bigcup_{N\ge M}L_{\alpha}(N).
\end{equation}
Fix $M\ge1$, then for $x\in L_{\alpha}$, define
\begin{equation}
  N(x) := \min\Big\{N\ge M:\sup_{t\in\T}|\omega_{N}(x,t)|\ge N^{\alpha}\Big\},
\end{equation}
and for $j\ge1$, define
\begin{equation}
  E_{j}:=  \{x\in L_{\alpha}:N(x)\in[2^{j-1}M, 2^{j}M)\}.
\end{equation}
From the expression \eqref{lim-sup}, we have
\begin{equation}
  L_{\alpha} \subset \bigcup_{j\ge1}E_j.
\end{equation}


We recall the lower bound of $S_{N,k}(x_k,t)$  given by Chen and Shparlinski in \cite{CS} and \cite{CS3}, where for $x_{k}\in\T$ with $1\le k\le d$, $S_{N,k}(x_k,t)$ is defined by
\begin{equation}\label{S-N-1}
  S_{N,k}(x_k,t):=\sum_{h=1}^{N}\frac{1}{h}|\sum_{n=1}^{N}e^{2\pi in\tfrac{h}{N}}e^{2\pi i(x_{k}n+tn^2)}|
  = \sum_{h=1}^{N}\frac{1}{h}|\omega_{N,k}(x_k+\tfrac{h}{N},t)|.
\end{equation}
\begin{lemma}\label{N<M}
  For all $(x_k,t)\in\T^2$ with $1\le k \le d$ and any $1\le N\le M$, we have
  \begin{equation}\label{S-M}
    |\omega_{N,k}(x_k,t)|\lesssim |S_{M,k}(x_k,t)|.
  \end{equation}
\end{lemma}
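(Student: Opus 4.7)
The plan is to apply the classical ``completing the sum'' technique from analytic number theory. The objective is to express the partial Weyl sum $\omega_{N,k}(x_k,t)$, whose summation range is $n\in[1,N]$, as a discrete weighted average of the complete sums $\omega_{M,k}$ evaluated at shifted frequencies $x_k+h/M$. Since $N\le M$, one can view $\mathbf{1}_{[1,N]}$ as a function on $\Z/M\Z$ and expand it via finite Fourier inversion.

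First I would expand, for $n\in\{1,\dots,M\}$,
\begin{equation*}
\mathbf{1}_{[1,N]}(n)=\frac{1}{M}\sum_{h=0}^{M-1}A_{h}\,e^{2\pi i hn/M},\qquad A_{h}=\sum_{m=1}^{N}e^{-2\pi i hm/M},
\end{equation*}
then substitute into $\omega_{N,k}(x_k,t)=\sum_{n=1}^{M}\mathbf{1}_{[1,N]}(n)\,e^{2\pi i(x_k n+tn^{2})}$ and swap the order of summation to obtain the key identity
\begin{equation*}
\omega_{N,k}(x_k,t)=\frac{1}{M}\sum_{h=0}^{M-1}A_{h}\,\omega_{M,k}(x_k+\tfrac{h}{M},t).
\end{equation*}
The Fourier coefficients satisfy $|A_{0}|=N$ and, via the Dirichlet-kernel estimate, $|A_{h}|\lesssim\min(N,M/h)\le M/h$ for $1\le h\le M-1$. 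Applying the triangle inequality then gives
\begin{equation*}
|\omega_{N,k}(x_k,t)|\;\lesssim\;\tfrac{N}{M}\,|\omega_{M,k}(x_k,t)|\;+\;\sum_{h=1}^{M-1}\tfrac{1}{h}\,|\omega_{M,k}(x_k+\tfrac{h}{M},t)|,
\end{equation*}
and the tail is already of the form $S_{M,k}(x_k,t)$ from \eqref{S-N-1}, up to the missing $h=M$ term, which by $1$-periodicity in $x_k$ equals $\tfrac{1}{M}|\omega_{M,k}(x_k,t)|$ and is itself contained in $S_{M,k}$.

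The main obstacle will be controlling the diagonal ($h=0$) contribution $\tfrac{N}{M}|\omega_{M,k}(x_k,t)|$ purely in terms of $S_{M,k}(x_k,t)$. My plan here is to exploit Parseval's identity on $\Z/M\Z$,
\begin{equation*}
\sum_{h=0}^{M-1}|\omega_{M,k}(x_k+\tfrac{h}{M},t)|^{2}=M^{2},
\end{equation*}
which prevents the mass of the shifted sums from being entirely concentrated at $h=0$; combined with a Cauchy--Schwarz-type estimate for $S_{M,k}$, this should force $\tfrac{N}{M}|\omega_{M,k}(x_k,t)|\lesssim S_{M,k}(x_k,t)$ as required. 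This is the step that most closely follows the completion argument of Chen--Shparlinski in \cite{CS3} which is cited for this lemma.
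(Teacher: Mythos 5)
The paper itself does not prove this lemma; it is stated as a recalled result of Chen--Shparlinski (\cite{CS}, \cite{CS3}), so there is no internal proof to compare against. Your completion-on-$\Z/M\Z$ framework is the right one and is what those references use, but two steps in the proposal fail. The Dirichlet-kernel estimate $|A_h|\lesssim\min(N,M/h)$ is only valid for $h\le M/2$; since $A_h$ is $M$-periodic with $|A_h|=|A_{M-h}|$, the sharp bound is $|A_h|\le\min\bigl(N,\tfrac{M}{2\min(h,M-h)}\bigr)$, so for $h$ near $M$ the weight $\tfrac{1}{M}|A_h|$ is of order $\tfrac{1}{M-h}$ (possibly $O(1)$), not $\tfrac{1}{h}\approx\tfrac{1}{M}$, and the displayed triangle-inequality bound does not follow from the completion identity: the near-$M$ terms are not dominated by the corresponding terms of $S_{M,k}$.

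More seriously, the Parseval idea cannot close the diagonal gap, because the inequality $\tfrac{N}{M}|\omega_{M,k}(x_k,t)|\lesssim S_{M,k}(x_k,t)$ is false: at $(x_k,t)=(0,0)$ one has $\omega_{M,k}(h/M,0)=0$ for $1\le h\le M-1$ and $\omega_{M,k}(1,0)=M$, so $S_{M,k}(0,0)=\tfrac{1}{M}\cdot M=1$, while $\tfrac{N}{M}|\omega_{M,k}(0,0)|=N$ can be as large as $M$. This same computation shows the lemma, exactly as the paper states it, cannot hold with a uniform constant (take $N=M$: $|\omega_{M,k}(0,0)|=M\not\lesssim 1=S_{M,k}(0,0)$). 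What your completion identity, with the corrected kernel bound, actually yields is the symmetric estimate
$$|\omega_{N,k}(x_k,t)|\lesssim\sum_{h=0}^{M-1}\min\Bigl(1,\tfrac{1}{\min(h,M-h)}\Bigr)\,\bigl|\omega_{M,k}\bigl(x_k+\tfrac{h}{M},t\bigr)\bigr|,$$
in which the diagonal $h=0$ carries weight $1$ rather than $1/M$ and shifts near $1$ get weight $\sim 1/(M-h)$; this appears to be the form Chen--Shparlinski actually use, with the paper's $S_{M,k}$ being a mis-transcription of it. With the target restated this way, your completion argument closes directly and no Parseval step is needed.
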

By this lemma, we can choose $M$ to be dyadic parameter to control the Weyl sums.
Indeed, for $x\in E_j$, by Lemma \ref{N<M} and the definition of $E_j$, we have
\begin{equation}\label{N<M-2}
  1\le N(x)^{-\alpha}\sup_{0<t<1}|\omega_{N(x)}(x,t)|
  \lesssim (2^{j}M)^{-\alpha}\sup_{0<t<1}|S_{2^{j}M}(x,t)|,
\end{equation}
where
\begin{equation}
\begin{split}
  S_{2^{j}M}(x,t)
  &: = \prod_{k=1}^{d}|S_{2^{j}M,k}(x_k,t)|\\
  &= \prod_{k=1}^{d}\Big|\sum_{h_k=1}^{2^{j}M}\frac{1}{h_k}\omega_{2^{j}M,k}(x_k+\tfrac{h_k}{2^{j}M},t)\Big|\\
  &= \Big|\sum_{\substack{1\le h_k\le 2^{j}M\\k=1,2,\cdots,d}}\prod_{k=1}^{d}\frac{1}{h_k}\omega_{2^{j}M,k}(x_k+\tfrac{h_k}{2^{j}M},t)\Big|\\
  &= \Big|\sum_{\substack{1\le h_k\le 2^{j}M\\k=1,2,\cdots,d}}\Big(\prod_{k=1}^{d}\frac{1}{h_k}\Big)\omega_{2^{j}M}(x+\tfrac{h}{2^{j}M},t)\Big|,
\end{split}
\end{equation}
and $S_{2^{j}M}(x_k,t)$ is defined by \eqref{S-N-1} and $h=(h_1, h_2, \cdots, h_d)$.

Recall that a Borel measure $\mu$ on $\T^d$ is said to be $\beta$-dimensional with $0<\beta\le d$ if
\begin{equation}
  c_{\beta}(\mu):=\sup_{B(x,r)}\frac{\mu(B(x,r))}{r^{\beta}} <\infty,
\end{equation}
where the supremum  is taken over balls $B(x,r)$ in $\T^d$.

\begin{lemma}[Frostman's Lemma, \cite{Mattila}]\label{Frost}
  Suppose $S\subset\T^d$ is a Borel set and $\mathcal{H}^{\gamma}(S)>0$ with $\gamma>0$.
  Then there exists a nonzero $\gamma$-dimensional measure $\mu_{\gamma}$ supported on $S$.
\end{lemma}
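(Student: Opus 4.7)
The plan is to construct $\mu_\gamma$ as a weak-$*$ limit of a sequence of measures built on dyadic cubes in $\T^d$. By inner regularity of Hausdorff measure on Borel sets, I first replace $S$ by a compact subset $K \subset S$ with $\mathcal{H}^{\gamma}(K) > 0$; this is convenient because the weak-$*$ limit will automatically be supported on any fixed compact set containing the supports of all the approximants.

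For each large integer $n$, I begin by distributing a (possibly overly concentrated) measure $\nu$ on $K$: place total mass $2^{-n\gamma}$, spread uniformly via Lebesgue measure, on each dyadic cube $Q$ of side length $2^{-n}$ that meets $K$. I then run the classical top-down pruning: for $k = n-1, n-2, \ldots, 0$, visit each dyadic cube $Q$ of side $2^{-k}$, and if $\nu(Q) > 2^{-k\gamma}$, rescale the restriction $\nu|_Q$ by the factor $2^{-k\gamma}/\nu(Q)$. Write $\mu_n$ for the resulting measure. By construction, $\mu_n(Q) \leq 2^{-k\gamma}$ for every dyadic cube $Q$ of side $2^{-k}$ with $k \leq n$, and since any ball $B(x,r) \subset \T^d$ meets $O_d(1)$ dyadic cubes of comparable side, this upgrades to $\mu_n(B(x,r)) \lesssim_d r^\gamma$ uniformly in $n$, which is precisely the Frostman condition for $\beta = \gamma$.

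The key point is a uniform lower bound $\mu_n(K) \gtrsim \mathcal{H}^{\gamma}_{\infty}(K)$, independent of $n$. One shows that after the pruning terminates, the support of $\mu_n$ is partitioned (up to a $\mu_n$-null set) by a family of disjoint dyadic cubes $\{Q_i\}$ which are \emph{saturated}, meaning $\mu_n(Q_i) = \ell(Q_i)^\gamma$; namely, each $Q_i$ is the first ancestor at which the pruning step was triggered as one walks up from the finest scale. Since this family covers $K$, summing gives $\mu_n(K) = \sum_i \ell(Q_i)^\gamma \geq c_d\, \mathcal{H}^{\gamma}_{\infty}(K) > 0$. With this uniform lower bound in hand, I pass to a weak-$*$ subsequential limit $\mu_\gamma$ of $\{\mu_n\}$; it is supported on $K \subset S$, inherits the Frostman inequality $\mu_\gamma(B(x,r)) \lesssim_d r^\gamma$ by testing against continuous bump functions sandwiching $\chi_{B(x,r)}$, and satisfies $\mu_\gamma(K) > 0$.

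The main obstacle is the uniform lower bound on $\mu_n(K)$: a naive pruning can reduce mass dramatically, so one must verify that the first-saturated-ancestor family $\{Q_i\}$ really does partition (a.e.\ of) the support and cover $K$. This is essentially the max-flow/min-cut identity on the $2^d$-ary dyadic tree between admissible flows (Frostman measures) and minimal cuts (net covers); after this step everything else—passage from dyadic cubes to balls, the dyadic/continuous comparability of Hausdorff content, and weak-$*$ extraction via the Banach--Alaoglu theorem—is routine.
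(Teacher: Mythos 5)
Your proposal is the standard dyadic mass--distribution proof of Frostman's Lemma, essentially the one in the cited reference (Mattila); the paper itself gives no proof of this lemma, so there is nothing further to compare against, and the argument is correct. One small slip in the description: the saturated cube $Q_i$ containing a given $x$ should be the \emph{largest} (coarsest) ancestor at which the pruning step was triggered---or the level-$n$ base cube if none was---rather than the first pruned cube met walking up from the finest scale, since any subsequent rescaling at a coarser level would push the mass of that first pruned cube strictly below $\ell(Q_i)^\gamma$; this is a bookkeeping slip only and does not affect the validity of the construction, since with the corrected choice the $Q_i$ are maximal, hence pairwise disjoint, and cover $K$, giving $\mu_n(\T^d)=\sum_i\ell(Q_i)^\gamma\gtrsim_d\mathcal{H}^\gamma_\infty(K)>0$ as you need.
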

Frostman's Lemma establishes the relationship between the Hausdorff dimension of the Borel set
and corresponding Borel measure supported on it.

Next we cite the following useful lemma transforming Theorem \ref{T-d} into the fractal version, which plays an essential role in the  control of the measure for $\T^d$ with respect to the Borel measure supported on it.

\begin{lemma}[Proposition 5.2 in \cite{EL}]\label{mu-gamma} 
  Let $d\ge1$, $p\ge1$ and $s_0\ge0$. Fix $\epsilon>0$ and suppose $f\in H^{s_0+\epsilon}(\T^d)$ is a function such that
  \begin{equation}
    \Big\|\sup_{t\in\T}|e^{it\Delta_{\T^d}}f|\Big\|_{L^{p}(\T^d)}\lesssim \|f\|_{H^{s_0+\epsilon}(\T^d)}.
  \end{equation}
  Then for any $\gamma$-dimensional measure $\mu_{\gamma}$ on $\T^d$, we have
  \begin{equation}
    \Big\|\sup_{t\in\T}|e^{it\Delta_{\T^d}}f|\Big\|_{L^{p}(\T^d, \mathrm{d}\mu_{\gamma})}\lesssim \|f\|_{H^{s}(\T^d)},\quad s>\frac{d-\gamma}{p} + s_0,
  \end{equation}
\end{lemma}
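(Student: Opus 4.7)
The plan is to reduce Lemma~\ref{mu-gamma} to the flat maximal estimate via a frequency-localized Bernstein argument combined with the Frostman condition $\mu_\gamma(B(x,r)) \lesssim r^\gamma$. First, decompose $f = \sum_{N \text{ dyadic}} f_N$ with $f_N = P_N f$ a Littlewood--Paley projection onto frequencies $|\xi|\sim N$, and set $F_N(x) = \sup_{t\in\T} |e^{it\Delta_{\T^d}} f_N(x)|$. For any ball $B = B(x_0, N^{-1})$, interchanging the two suprema and applying a local Bernstein-type inequality in $x$ (valid because, for each fixed $t$, $e^{it\Delta}f_N$ has $x$-frequency $\sim N$) yields, up to rapidly-decaying tails absorbed via bounded overlap,
$$\|F_N\|_{L^\infty(B)}^p = \sup_t \|e^{it\Delta}f_N\|_{L^\infty(B)}^p \lesssim N^d \int_{CB} F_N(y)^p\, dy.$$

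Next, cover $\T^d$ by balls $\{B_j\}$ of radius $N^{-1}$ with bounded overlap. Using the Frostman bound $\mu_\gamma(B_j) \lesssim N^{-\gamma}$,
$$\int_{\T^d} F_N^p\, d\mu_\gamma \leq \sum_j \mu_\gamma(B_j)\|F_N\|_{L^\infty(B_j)}^p \lesssim N^{d-\gamma}\int_{\T^d} F_N^p\, dy.$$
Combining with the assumed flat maximal estimate applied to $f_N$, for which $\|f_N\|_{H^{s_0+\eps}} \sim N^{s_0+\eps}\|f_N\|_2$, gives
$$\|F_N\|_{L^p(d\mu_\gamma)} \lesssim N^{(d-\gamma)/p + s_0 + \eps}\|f_N\|_{L^2}.$$
Summing dyadically via triangle inequality and Cauchy--Schwarz,
$$\Big\|\sup_t|e^{it\Delta_{\T^d}} f|\Big\|_{L^p(d\mu_\gamma)} \leq \sum_N N^{(d-\gamma)/p + s_0 + \eps}\|f_N\|_2 \lesssim \|f\|_{H^s}$$
for any $s > s_0 + (d-\gamma)/p$ (taking $\eps$ small and inserting an extra $N^{-\delta}$ weight to make the dyadic sum converge).

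The main obstacle is justifying the local Bernstein step rigorously in the presence of the $t$-supremum on the torus. The key reduction is the identity $\sup_{x\in B}\sup_t = \sup_t\sup_{x\in B}$, which converts the task into a pointwise local $L^\infty \to L^p$ comparison uniform in $t$; this follows from writing $e^{it\Delta}f_N = e^{it\Delta}f_N * \check\chi_N$ where $\chi_N$ is a smooth frequency cutoff at scale $N$ (periodized on $\T^d$), and estimating via Young's inequality with the fast-decaying kernel $\check\chi_N$, whose tails across different balls are harmless thanks to bounded overlap in the covering.
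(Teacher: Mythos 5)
The paper does not prove this lemma; it is cited verbatim from Eceizabarrena--Luc\`{a} (Proposition 5.2 of \cite{EL}), so there is no in-paper proof to compare against. Your argument is correct and is the standard ``fractal transference'' route, which is essentially what \cite{EL} does as well: Littlewood--Paley decomposition, a reverse-H\"older (local Bernstein) inequality at the natural scale $N^{-1}$, the Frostman bound $\mu_\gamma(B_j)\lesssim N^{-\gamma}$ on the pieces of a finitely overlapping cover, and a dyadic summation via Cauchy--Schwarz. The one genuinely delicate point you handle correctly is that $F_N=\sup_t|e^{it\Delta}f_N|$ is not itself band-limited, so Bernstein cannot be applied to $F_N$ directly; your observation that $\sup_{x\in B}\sup_t=\sup_t\sup_{x\in B}$, together with the fact that for each fixed $t$ the Bernstein bound is dominated by the $t$-uniform quantity $N^d\int_{CB}F_N^p$, is exactly the right fix, and the rapidly decaying tails of the reproducing kernel are indeed absorbed by bounded overlap after summing over the cover.

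One small bookkeeping remark: as written, your dyadic sum yields the conclusion for $s>\frac{d-\gamma}{p}+s_0+\epsilon$, whereas the lemma asserts $s>\frac{d-\gamma}{p}+s_0$. This discrepancy is an artefact of the (slightly informal) phrasing of the lemma in the paper: the flat hypothesis is, in the intended applications, available for every $\epsilon>0$, so one simply applies your argument with $\epsilon$ chosen small relative to $s-\frac{d-\gamma}{p}-s_0$. No further idea is needed.
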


Now we show how the upper bound of the Hausdorff dimension of the set $L_{\alpha}$ follows after combining Theorem \ref{T-d} with these three lemmas above.

For $\frac{d}{2}\le\alpha\le d$, we assume that $H^{\gamma}(L_{\alpha}) >0$ for
some $\gamma>0$. Then by
Lemma \ref{Frost}, there exists a $\gamma$-dimensional Borel measure $\mu_{\gamma}$
supported on $L_{\alpha}$.\\
{\bf Claim:} if $\gamma>\frac{2(d+1)}{d}(d-\alpha)$, then we have $\mu_{\gamma}(\T^d)=0$.



We first prove this claim, then show the equivalence between this claim
 and the upper bound for $\dim_{H} L_{\alpha}$ in Proposition \ref{Hau-1} in the end of this subsection.

By the estimates \eqref{N<M-2}, we have
\begin{equation}\label{S-j}
\begin{split}
  \mu_{\gamma}(\T^d)^{\frac{1}{p}} =\Big(\int_{\T^d}\,\mathrm{d}\mu_\gamma\Big)^{\frac{1}{p}}
  &\lesssim\Big(\sum_{j\ge1}(2^{j}M)^{-p\alpha}\int_{E_{j}}\sup_{0<t<1}|S_{2^{j}M}(x,t)|^p\,\mathrm{d}\mu_\gamma\Big)^{\frac{1}{p}}.
\end{split}
\end{equation}
For each $j\ge1$, by Lemma \ref{mu-gamma}, we obtain
\begin{equation}\label{S-j-2}
  \begin{split}
    &\Big(\int_{E_{j}}\sup_{0<t<1}|S_{2^{j}M}(x,t)|^p\,\mathrm{d}\mu_\gamma\Big)^{\frac{1}{p}}\\
   =&\Big(\int_{E_{j}}\sup_{0<t<1}\Big|\sum_{\substack{1\le h_k\le
     2^{j}M\\k=1,2,\cdots,d}}\Big(\prod_{k=1}^{d}\frac{1}{h_k}\Big)\omega_{2^{j}M}(x+\tfrac{h}{2^{j}M},t)\Big|^{p}\,\mathrm{d}\mu_\gamma\Big)^{\frac{1}{p}}\\
   \le&\sum_{\substack{1\le h_k\le2^{j}M\\k=1,2,\cdots,d}}\Big(\prod_{k=1}^{d}\frac{1}{h_k}\Big)
   \Big(\int_{E_{j}}\sup_{0<t<1}|\omega_{2^{j}M}(x+\tfrac{h}{2^{j}M},t)|^{p}\,\mathrm{d}\mu_\gamma\Big)^{\frac{1}{p}}\\
   =&\prod_{k=1}^{d}\Big(\sum_{\substack{1\le h_k\le2^{j}M\\k=1,2,\cdots,d}}\frac{1}{h_k}\Big)
   \Big(\int_{E_{j}}\sup_{0<t<1}|\omega_{2^{j}M}(x+\tfrac{h}{2^{j}M},t)|^{p}\,\mathrm{d}\mu_\gamma\Big)^{\frac{1}{p}}\\
   \lesssim_{\epsilon}&(2^{j}M)^{d\epsilon}(2^{j}M)^{\frac{d-\gamma}{p}+\frac{d}{2}+\frac{d}{2(d+1)}}.
  \end{split}
\end{equation}
Taking $p=\frac{2(d+1)}{d}$ and inserting \eqref{S-j-2} into \eqref{S-j}, we have
\begin{equation}\label{mu-gamma-2}
\begin{split}
  \mu_{\gamma}(\T^{d})^{\frac{d}{2(d+1)}}
  &\lesssim \Big\{\sum_{j\ge1}(2^{j}M)^{-\frac{2(d+1)}{d}\alpha}(2^{j}M)^{-\frac{2(d+1)}{d}d\epsilon}(2^{j}M)^{d-\gamma+d+2}\Big\}^{\frac{d}{2(d+1)}}\\
  &=M^{d-\alpha-\frac{d}{2(d+1)}\gamma-+d\epsilon}\Big(\sum_{j\ge1}2^{-j(\gamma+\frac{2(d+1)}{d}(\alpha-d)-2(d+1)\epsilon)}\Big)^{\frac{d}{2(d+1)}}.
\end{split}
\end{equation}
If $\gamma>\frac{2(d+1)}{d}(d-\alpha)$, say $\gamma=\frac{2(d+1)}{d}(d-\alpha)+\delta$. Then from \eqref{mu-gamma-2}, we have
\begin{equation}
  \mu_{\gamma}(\T^{d})^{\frac{d}{2(d+1)}}
  \lesssim M^{-\frac{d}{2(d+1)}\delta+d\epsilon}\Big(\sum_{j\ge1}2^{-j(\delta-2(d+1)\epsilon}\Big)^{\frac{d}{2(d+1)}}
  \lesssim M^{\frac{d}{4(d+1)}\delta},
\end{equation}
where we choose $\epsilon=\frac{d}{4(d+1)}\delta$. Let $M$ tend to $\infty$, we have $\mu_{\gamma}(\T^d)=0$, which proves the claim.


This claim and Lemma \ref{Frost} imply that $\mathcal{H}^{\gamma}(L_{\alpha})=0$ for $\gamma>\frac{2(d+1)}{d}(d-\alpha)$, which yields
\begin{equation}\label{upp}
\dim_{H}(L_{\alpha})\le \tfrac{2(d+1)}{d}(d-\alpha).
\end{equation}
Thus, the upper bound of the Hausdorff dimension for $L_{\alpha}$ in
Proposition \ref{Hau-1} has been proved.

\subsection{Lower bound}

Inspired by the results of Barron \cite{A-B}, one may conjecture  that the lower bound of  $\text{dim}_H L_\alpha$  is $\frac{2(d+1)(d-\alpha)}{d},$
if $\frac{d}{2}+\frac{d}{ 2(d+1) }<\alpha\leq d. $
Actually, this is indicated in the proof of Eceizabarrena and Luc\`{a} \cite{EL}.
Indeed, they show that there exists a function  $f\in H^s$ for $s<\frac{d}{2(d+1)}$ which consists of segmented Gauss sums fails to convergence on a nontrivial Hausdorff dimension set(See Theorem 1.1 of \cite{EL}).
Here, we explicate the proof of the lower bound of $\text{dim}_H L_\alpha$ via Proposition \ref{L^1-prop}.
%


By the analysis in Eceizabarrena and Luc\`{a} \cite{EL}, we have
\begin{lemma}[P. 16 in \cite{EL}]\label{EL}
Let  $\tau>\frac{d+1}d.$
Define
$$G_\tau=\{x\in \mathbb T^d: \big |qx_j-p_j\big|\leq q^{1-\tau}  \text{ holds for  infinitely many odd } q \text{  and  even } p_j  \}.$$
Then,  we have $\dim_{H} G_\tau = \frac{d+1}{\tau}.$
\end{lemma}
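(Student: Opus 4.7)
The result is essentially a Jarn\'ik--Besicovitch type theorem for simultaneous Diophantine approximation on $\T^d$, with the additional restrictions that the denominator $q$ is odd and the numerators $p_j$ are even. The plan is to obtain the matching upper and lower bounds for $\dim_H G_\tau$ separately: the upper bound by a direct Hausdorff--Cantelli covering argument, and the lower bound by reducing to the classical Jarn\'ik--Besicovitch theorem via a parity-preserving change of variables.

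For the upper bound, I would express $G_\tau$ as a limsup set
\begin{equation*}
G_\tau = \bigcap_{Q\ge 1} \bigcup_{\substack{q\ge Q\\ q\text{ odd}}} \bigcup_{\substack{p\in \Z^d\cap [0,q]^d\\ p_j\text{ even}}}\prod_{j=1}^{d}\bigl[\tfrac{p_j}{q}-q^{-\tau},\tfrac{p_j}{q}+q^{-\tau}\bigr].
\end{equation*}
For fixed $q$ there are at most $q^d$ admissible boxes, each of diameter $\lesssim q^{-\tau}$. For $s>(d+1)/\tau$ the series $\sum_q q^d\cdot q^{-s\tau}$ converges because $s\tau>d+1$, so the Hausdorff--Cantelli lemma (the Borel--Cantelli principle for Hausdorff measure) forces $\mathcal{H}^s(G_\tau)=0$, which yields $\dim_H G_\tau\le (d+1)/\tau$.

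For the lower bound, I would reduce to the unrestricted Jarn\'ik--Besicovitch theorem. Set $y=x/2$ and $r_j=p_j/2$; then the condition $|qx_j-p_j|\le q^{1-\tau}$ with $p_j$ even translates to $|qy_j-r_j|\le \tfrac12 q^{1-\tau}$ with $r_j\in\Z$ arbitrary. Thus $\tfrac12 G_\tau$ contains the classical $\tau$-well-approximable set restricted to odd denominators, and since the map $x\mapsto x/2$ is bi-Lipschitz on a fundamental domain it preserves Hausdorff dimension. It therefore suffices to show that the set of $y\in\T^d$ admitting infinitely many \emph{odd} $q$ with $\|qy_j\|\le \tfrac12 q^{1-\tau}$ (for each $1\le j\le d$) already has Hausdorff dimension $(d+1)/\tau$. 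This is handled by the standard Jarn\'ik Cantor-set / ubiquity construction, adapted so that at each stage one only uses odd denominators from a dyadic block $[Q_k,2Q_k]$; since the odd integers have positive density, the counting and nested-interval arguments (together with the mass distribution principle) carry through verbatim and produce a measure $\mu$ supported on $\tfrac12 G_\tau$ satisfying $\mu(B(x,r))\lesssim r^{(d+1)/\tau-\epsilon}$ for all small $r$.

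The main obstacle is the verification at the Cantor-construction stage that restricting $q$ to odd values (and, after the substitution, $r$ to arbitrary integers) still allows the same local counting estimates as in the unrestricted Jarn\'ik argument — this is where one must be careful to track the positive density of odd denominators in each dyadic window and ensure that inside every ancestor box $B_k$ of radius $\sim q_k^{-\tau}$ one can place sufficiently many children of scale $q_{k+1}^{-\tau}$ with the parity constraint. Once this density bookkeeping is in place, the mass distribution principle together with Frostman's lemma completes the lower bound $\dim_H G_\tau\ge (d+1)/\tau$, matching the upper bound.
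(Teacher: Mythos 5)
The paper does not prove this lemma itself: it is cited directly from \cite{EL}, so there is no internal argument to compare against. Your two-sided plan is the standard one for Jarn\'ik--Besicovitch variants with arithmetic constraints and is almost certainly the route taken in \cite{EL}: the Hausdorff--Cantelli covering gives the upper bound since for each odd $q$ there are $\lesssim q^d$ admissible centers, each box has diameter $\lesssim q^{-\tau}$, and $\sum_q q^{d-s\tau}$ converges for $s>(d+1)/\tau$; the rescaling $y=x/2$, $r_j=p_j/2$ then absorbs the even-numerator constraint and reduces the lower bound to a Jarn\'ik-type statement with denominators restricted to odd $q$ and approximation function $\tfrac12 q^{1-\tau}$, whose predicted dimension $\frac{d+1}{(\tau-1)+1}=\frac{d+1}{\tau}$ matches the claim.

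Your lower-bound sketch is correct but would need to be made explicit at one point: you must verify that restricting $q$ to odd integers (a set of density $\tfrac12$) does not shrink the dimension. The reason it does not is that in any dyadic window $[Q,2Q]$ roughly half the integers are odd, so the count of rational centers $r/q$ with $q$ odd inside an ancestor box of side $\sim q_k^{-\tau}$ is still $\gtrsim Q^{d+1}q_k^{-\tau d}$ up to a constant; the Jarn\'ik Cantor construction and the mass distribution principle then run unchanged and produce a Frostman measure of exponent $(d+1)/\tau-\epsilon$. Alternatively one can bypass the hand-built Cantor set by citing a restricted-denominator Jarn\'ik--Besicovitch theorem (for instance via the Mass Transference Principle of Beresnevich--Velani applied to a Khintchine-type theorem with odd denominators). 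Either way the outline you give is sound and complete at the level of detail one would expect for this lemma.
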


The ingredient of the proof for the estimate of the lower bound
for $\dim_{H} L_{\alpha}$ is the construction of  a subset included in $L_{\alpha}$
as in Lemma \ref{EL}, which is followed by the result immediately.

Take $q $ odd and $N_q$ such that
$q \sim (100)^{\frac{2(d-\alpha)}{d} } N_q^{\frac{2(d-\alpha)}{d} } <N_q^\frac{d}{d+1}. $
For $x$ such that
$$|x_j -\tfrac{b_j}{q} |<  q^{-\frac{d}{2(d-\alpha)}   } <  \tfrac{1}{100N_q}, \text{ \quad for some even}\,\, b_j \in \mathbb{Z}, j=1,\cdots,d.$$
By Proposition \ref{L^1-prop}, we have
\[
\sup_{t\in\mathbb T}|u(x,t)|\geq C \tfrac{N_q^{d}}{q^{\frac{d}{2}}} \sim N_q^{\alpha}.
\]
This yields $G_\frac{d}{2(d-\alpha)}   \subset L_\alpha$.
Thus, by Lemma \ref{EL}, we have
\begin{equation}\label{low}
\tfrac{2(d+1)(d-\alpha)}{d} =\dim_H G_\frac{d}{2(d-\alpha)}  \leq  \dim_H L_\alpha,
\end{equation}
which gives the proof for the lower bound of the Hausdorff dimension for $L_{\alpha}$ in
Proposition \ref{Hau-1} and completes the proof for Proposition \ref{Hau-1} together with
the estimate \eqref{upp}.

\begin{remark}
It is interesting to study the Fourier dimension of $G_\tau$ and it is reasonable to
conjecture $\dim_F G_\tau= \dim_H G_\tau = \frac{d+1}{\tau}$. If this conjecture holds, $G_\tau$ is  called a Salem set.


\end{remark}

\appendix

{\centerline{{\Large{\bf{Appendix}}}}

\section*{An alternate proof of Theorem \ref{T-d} inspired by Baker's argument}

\setcounter{equation}{0}
\renewcommand\theequation{A.\arabic{equation}}

\bigskip

\centerline{{\Large{by Alex Barron}}}

\bigskip

In this Appendix we give an alternate proof of Theorem \ref{T-d} which is more directly related to the argument employed by Baker in \cite{Baker}. Our argument relies on a straightforward higher-dimensional generalization of the Diophantine approximation result used by Baker in \cite{Baker}. Our proof slightly improves the $N^{\epsilon}$ loss in Theorem \ref{T-d} above. At the end of the Appendix we show that essentially the same argument also implies Strichartz-type estimates for the Weyl sum with logarithmic losses.

Our main tool is the following.

\begin{customprop}{A.1}\label{prop:multiBaker}
Let $w_N$ be the Weyl sum on $\T^{d+1}$ as in \eqref{omega-N}, given by $$w_{N}(x,t) = \sum_{ \substack{\textbf{n} \in \Z^d \cap [1,N]^{d} }} e^{2\pi i(x \cdot \textbf{n} + t|\textbf{n}|^2 )},$$ and suppose that $(x,t) \in \T^{d + 1}$ with $$ |w_{N}(x,t)| \geq N^{\alpha} \geq C_{d} N^{\frac{d}{2}}$$ for some sufficiently large constant $C_d$ that does not depend on $N$. Then if $N$ is sufficiently large there is an integer $q$ such that \begin{equation}\label{eq:tBoundHigherDim0} |t - a/q| \lesssim q^{-1}N^{-\frac{2}{d}\alpha} \ \ \text{ for some } \ a \leq q \ \text{ with } \  (a,q) = 1 \end{equation} and  \begin{equation}\label{eq:qBoundHigherDim0}  1 \leq q \lesssim  N^{2 - \frac{2}{d}\alpha},\end{equation} and moreover \begin{equation}\label{eq:higherDimBaker} \prod_{i=1}^{d}\|qx_i\| \lesssim N^{d -2\alpha}.\end{equation}
\end{customprop}
The proof of this proposition uses estimates that are standard if one ignores various losses on the order of $N^{\epsilon}$. Since we are interested in applying this theorem with no logarithmic losses we will sketch the requisite background before proving Proposition \ref{prop:multiBaker}.

We begin by defining $$I(\beta_1, \beta_2) = \int_{0}^{N} e^{2\pi i (\beta_1 \xi + \beta_2 \xi^2)} d\xi .$$ We have the following classical estimate, which is a consequence of integration by parts and the Van der Corput lemma for oscillatory integrals.

 \begin{customprop}{A.2}\label{prop:integralEst}
 One has $$|I(\beta_1, \beta_2)| \leq CN\max(1, N|\beta_1|, N^{2}|\beta_2|)^{-\frac{1}{2}}.$$
\end{customprop}
\noindent See also \cite{Vaughan1}, Chapter 7, for a more general version of this estimate. We will also use the following sharp decomposition of the one-variable quadratic Weyl sum due to Vaughan \cite{Vaughan2}.

\begin{customlemma}{A.3}[\cite{Vaughan2}, Theorem 8] \label{lem:bootstrap}
Suppose there are integers $1 \leq a,q \leq N$ with $(a,q) = 1$ such that $|t- a/q| \leq q^{-1}N^{-1}$. Also suppose $b$ is chosen so that $1 \leq b \leq q$ and $$|x - b/q| \leq 1/(2q).$$ Writing $$\beta_1 = x - b/q, \ \ \ \beta_2 = t - a/q,  $$ and $$ S(q) = \sum_{r = 0}^{q-1} e^{2\pi i ( \frac{b}{q}r + \frac{a}{q}r^2 )}, $$ we have \begin{equation} \label{eq:mainDecomp}\sum_{n=1}^{N} e^{2\pi i (xn + tn^2)} = q^{-1}S(q)I(\beta_1, \beta_2) + O(q^{1/2} + N|\beta_2|^{1/2}q^{1/2}). \end{equation}
\end{customlemma}
\noindent The key point here is the error term in \eqref{eq:mainDecomp}, which allows us to prove Proposition \ref{prop:multiBaker} with no logarithmic loss in $N$.

\begin{proof}[Proof of Proposition \ref{prop:multiBaker}]  We will use Lemma \ref{lem:bootstrap}, although we require some set-up. Let $w_{N,j}$ be a one-dimensional quadratic Weyl sum in the $j$-th variable. Choose $\gamma_j$ such that $$N^{\gamma_j} \leq |w_{N,j}(x_j, t)| \leq 2N^{\gamma_j}$$ for each $j$.
	
	Let $k$ be the number of $w_{N,j}$ such that $$|w_{N,j}(x_j, t)| \geq C_1 N^{\frac{1}{2}},$$ where $C_1$ is a sufficiently large constant to be chosen below. Note that we must have $k \geq 1$ if $C_d$ is sufficiently large since $|w_{N}(x,t)| \geq C_d N^{d/2}$. After relabeling we may assume that \begin{equation}\label{eq:unifLower} |w_{N,j}(x_j, t)| \geq N^{\gamma_j} \geq C_1  N^{\frac{1}{2}} \ \ \text{ for all } \ j = 1,...,k\end{equation} and \begin{equation} \label{eq:unifLower2} |w_{N,j}(x_j, t)| < C_1  N^{\frac{1}{2}} \ \ \text{ for } \ j = k+1, ..., d. \end{equation} Now by construction we have $$N^{\alpha} \leq \prod_{i=j}^{d}|w_{N,j}(x_j, t)| \leq  C N^{\gamma_1 + ... + \gamma_{k}}N^{\frac{1}{2}(d-k)}$$ and therefore \begin{equation} \label{eq:gammaBound} N^{\gamma_1 + ... + \gamma_k} \gtrsim N^{\alpha - \frac{1}{2}(d-k)}.\end{equation}
	
	We now apply Dirichlet's theorem to find $0 \leq a < q \leq N$ with $(a,q) = 1$ such that $|t - a/q| \leq q^{-1}N^{-1}$. We also find $1 \leq b_j \leq q$ such that $|x_j - b_j /q| \leq 1/(2q)$. For $j = 1, ..., k$ we have $|w_{N,j}(x_j, t)| \geq C_1 N^{1/2}$. We now apply the decomposition in Lemma \ref{lem:bootstrap}. If $C_1$ is sufficiently large the error term is negligible, and therefore the classical Gauss sum bound $|S(q)|\lesssim q^{1/2}$ yields \begin{equation} \label{eq:iBound} |w_{N,j} (x_j, t)| \lesssim  q^{-1/2} N \max(1, N|x_j - b_j/q|, N^2 |t -a/q| )^{-1/2}, \ \ j = 1,..., k. \end{equation} With \eqref{eq:unifLower2} this implies $$N^{\alpha} \leq |w_{N}(x,t)| \lesssim q^{-k/2}|t-a/q|^{-k/2}N^{\frac{d-k}{2}},$$ and since $|t - a/q| \leq q^{-1}N^{-1}$ we have $N^{1/2} \leq q^{-1/2}|t-a/q|^{-1/2}$ and hence \begin{equation}\label{eq:tBoundMulti} N^{\alpha} \lesssim q^{-d/2} |t-a/q|^{-d/2}. \end{equation} The estimate \eqref{eq:tBoundHigherDim0} now follows after rearranging \eqref{eq:tBoundMulti}. From \eqref{eq:iBound} we also have $$q \lesssim N^{2 - 2\gamma_j}, \ \ j = 1,..., k,$$ and therefore by \eqref{eq:gammaBound} $$q \lesssim N^{2 - \frac{2}{k}(\gamma_1 + ... + \gamma_k)} \lesssim N^{2- (\frac{2}{k}\alpha - \frac{d-k}{k})}.$$ Now $$ \frac{2}{k} \alpha - \frac{d-k}{k} \geq \frac{2}{d}\alpha $$ since $\alpha \geq d/2$, so \eqref{eq:qBoundHigherDim0} follows. Finally, to prove \eqref{eq:higherDimBaker} we note that \eqref{eq:iBound} implies \begin{equation} \label{eq:bakerEst1} \|qx_j \| \lesssim N^{1-2\gamma_j} \ \ \text{ for } \ \ j = 1,...,k, \end{equation} Then from \eqref{eq:bakerEst1} and \eqref{eq:gammaBound} we obtain
	\begin{align*}\prod_{j=1}^{k} \|qx_j \| &\lesssim  N^{k - 2(\gamma_1 + ... + \gamma_k)} \lesssim N^{d - 2\alpha  }. \end{align*} But $\|qx_j\| < 1$ for each $j$, so \eqref{eq:higherDimBaker} follows.
\end{proof}

We now show that Proposition \ref{prop:multiBaker} implies Theorem \ref{T-d} with $N^{\epsilon}$ loss replaced by $\log(N)^{\frac{d^2}{2(d+1)}}$. To begin the proof note that by pigeonholing it suffices to show that if $$E^{d}_{\alpha} = \{x \in \T^d : N^{\alpha} \leq \sup_{0<t<1}|w_{N}(x,t)| < 2N^{\alpha} \} $$ for some dyadic $N^{\alpha}$ then $$\big( \int_{E_{\alpha}} \sup_{0 < t < 1}|w_{N}(x,t)|^\frac{2(d+1)}{d} dx \big)^{\frac{d}{2(d+1)}} \lesssim \log(N)^{(d-1)\frac{d}{2(d+1)}} N^{\frac{d}{2(d+1)} + \frac{d}{2}}.$$ Moreover, we can assume that $$\frac{d}{2(d+1)} + \frac{d}{2} < \alpha \leq d$$ since otherwise the estimate is trivial. In particular Proposition \ref{prop:multiBaker} applies.

It suffices to prove the following.
\begin{customprop}{A.4} \label{prop:measureEst}
Suppose $\alpha > \frac{d}{2(d+1)} + \frac{d}{2}.$ Then  \begin{equation}\label{eq:measureEst} |E^{d}_{\alpha}| \lesssim \log(N)^{d-1} N^{d+2 -\frac{2(d+1)}{d}\alpha}. \end{equation}
\end{customprop}

\begin{proof}  Our proof will rely on the Diophantine approximation properties of the points where $w_{N}$ can attain large values, as quantified by Proposition \ref{prop:multiBaker}. Let us momentarily fix $x \in E^{d}_{\alpha}$. By Proposition \ref{prop:multiBaker} there must be an integer $1 \leq  q \lesssim N^{2 - \frac{2}{d}\alpha}$ and integers $1 \leq b_j \leq q$ such that \begin{equation} \label{eq:prodEst} \prod_{i=1}^{d}|x_j - b_j/q| \lesssim q^{-d}N^{d - 2\alpha}.\end{equation} Given a vector $\textbf{b} = (b_1,...,b_{d}) \in \Z^d$ with $1 \leq b_j \leq q,$ let $\mathcal{A}(q;\textbf{b}) \subset \T^d$ denote the set of $x$ for which \eqref{eq:prodEst} holds.
	\begin{customlemma}{A.5}\label{lem:volEst}
We have $|\mathcal{A}(q;\textbf{b})| \lesssim \log(N)^{d-1} q^{-d}N^{d-2\alpha}.$
	\end{customlemma}
	\begin{proof} Note that $\mathcal{A}(q;\textbf{b})$ is contained in a union of $O(\log(N)^{d-1})$ dyadic rectangles, each of volume proportional to $q^{-d}N^{d-2\alpha}$. Alternatively one can write $|\mathcal{A}(q;\textbf{b})|$ as a $d$-dimensional integral and the result then follows from an elementary calculation. \end{proof}
	
	Now by the above discussion we must have $$E_{\alpha}^{d} \subset \bigcup_{q=1}^{cN^{2- \frac{2}{d}\alpha}} \bigcup_{\substack{\textbf{b} \in \Z^d \\ 1 \leq b_j \leq q }} \mathcal{A}(q;\textbf{b}),$$ and therefore by Lemma \ref{lem:volEst} \begin{align*}
	|E_{\alpha}^{d}| &\lesssim \log(N)^{d-1}N^{d-2\alpha }\sum_{q=1}^{c N^{2- \frac{2}{d}\alpha}} \sum_{\substack{\textbf{b} \in \Z^d \\ 1 \leq b_j \leq q }}q^{-d} \\ & \lesssim \log(N)^{d-1} N^{d - 2\alpha} N^{2- \frac{2}{d}\alpha} \\&  \lesssim \log(N)^{d-1} N^{d + 2 -\frac{2(d+1)}{d}\alpha }.
	\end{align*} This proves Proposition \ref{prop:measureEst}.  \end{proof}

The desired estimate now follows immediately from Proposition \ref{prop:measureEst}, as discussed before. 

We end the Appendix by remarking that Proposition \ref{prop:multiBaker} also implies the following mean-value or Strichartz-type estimate.

\begin{customprop}{A.6}\label{prop:strichartz}
Let $p_{d} = \frac{2(d+2)}{d}$. We have $$\|w_{N}\|_{L^{p_d} (\T^{d+1})} \leq C \log(N)^{\sigma} N^{\frac{d}{2}},$$ with $\sigma = \frac{d^2}{2(d+2)} $
\end{customprop}
\noindent This was proved by Hu and Li in \cite{Hu-Li} for Weyl sums with a loss in $N$ that is slightly worse than logarithmic (resulting from the divisor bound). It also follows with constant $N^{\frac{d}{2} + \epsilon}$ from the much more general Strichartz estimates proved by Bourgain and Demeter in \cite{BD} using decoupling. It was shown by Bourgain in \cite{Bourgain-1} that one needs $\sigma \geq \frac{d}{2(d+2)}.$

\begin{proof}The proof is similar to the proof of Proposition \ref{prop:measureEst}. By pigeonholing we see it suffices to show that $$\int_{S_{\alpha}}|w_{N}(x,t)|^{p_d} \  dxdt \lesssim \log(N)^{d-1}N^{d + 2},$$ where $$F_{\alpha}= \{(x,t) \in \T^{d} \times \T : N^{\alpha} \leq |w_{N}(x,t)| < 2N^{\alpha} \}.$$ Using Proposition \ref{prop:multiBaker} as above, we see that $$F_{\alpha} \subset \bigcup_{q=1}^{cN^{2- \frac{2}{d}\alpha}} \bigg( \bigcup_{\substack{\textbf{b} \in \Z^d \\ 1 \leq b_j \leq q }} \mathcal{A}(q;\textbf{b}) \times \bigcup_{ \substack{1 \leq a \leq q } }\{t: |t-a/q| \leq Cq^{-1}N^{-\frac{2}{d}\alpha} \} \bigg).$$ Therefore \begin{align*}|F_{\alpha}| &\lesssim \log(N)^{d-1}\sum_{q=1}^{cN^{2- \frac{2}{d}\alpha}} \sum_{\substack{\textbf{b} \in \Z^d \\ 1 \leq b_j \leq q }} \sum_{1 \leq a \leq q}q^{-(d+1)} N^{d-2\alpha } N^{-\frac{2}{d}\alpha} \\ & \lesssim \log(N)^{d-1} N^{d + 2 -\frac{2(d+1)}{d}\alpha } N^{-\frac{2}{d}\alpha} \\ &\lesssim \log(N)^{d-1} N^{d+2} N^{-\frac{2(d+2)}{d}\alpha} \end{align*} as desired.
	
\end{proof}

{\bf Acknowledgements.}
The authors are very grateful to Alex Barron for adding an appendix, which gives another new proof
for Theorem \ref{T-d} and is very precious and helpful for the readers to understand.
The authors would also like to thank the associated editor and anonymous referee for their helpful comments and suggestions which helped improve the paper greatly.
C. Miao was supported by the National Key Research
and Development Program of China (No. 2020YFA0712900) and NSFC Grant 11831004.
T. Zhao was supported by the NSFC Grant 12101040 and the Fundamental Research Funds for the Central Universities (FRF-TP-20-076A1).


\begin{thebibliography}{99}


\bibitem{Baker}
R. Baker,
\newblock $L^p$ maximal estimates for quadratic Weyl sums.
\newblock Acta Math. Hungar., 2021, 165:316-325.

\bibitem{BCS1}
R. Baker, C. Chen and I. E. Shparlinski,
\newblock Bounds on the norms of maximal operators on Weyl sums.
\newblock arXiv:2107.13674v1.

\bibitem{BCS2}
R. Baker, C. Chen and I. E. Shparlinski,
\newblock Large Weyl sums and Hausdorff dimension.
\newblock arXiv:2108.10439v1.


\bibitem{A-B}
A. Barron,
\newblock An $L^{4}$ maximal estiamate for quadratic Weyl sums.
\newblock International Math. Research Notices, 2021, 1-28.

\bibitem{Besicovitch-1934} A. S.  Besicovitch, \newblock Sets of Fractional Dimensions (IV): On Rational Approximation to Real Numbers. J. London Math. Soc., 1934, 9(2):126-131.

\bibitem{Bourgain-1}
J. Bourgain,
\newblock Fourier transform restriction phenomena for certain lattice subsets and applications to nonlinear evolution equations part \uppercase\expandafter{\romannumeral1}: Schr\"odinger equations.
\newblock Geom. and Func. Anal., 1993, 3(2):107-156.

\bibitem{Bourgain-0}
J. Bourgian,
\newblock On the Schr\"odinger maximal function in higher dimension,
\newblock Proc. Steklov Inst. Math., 2013, 280:46-60.

\bibitem{Bourgain-2}
J. Bourgain,
\newblock  A note on the Schr\"odinger maximal function. J. Anal. Math., 2016, 130: 393-396.

\bibitem{BD}
J. Bourgain and C. Demeter,
\newblock The proof of the $\ell^{2}$ decoupling conjecture.
\newblock Ann. of Math., 2015, 182:351-389.

\bibitem{BG}
J. Bourgain and L. Guth,
\newblock Bounds on oscillatory integral operators based on multilinear estimates.
\newblock Geom. Func. Anal., 2011, 21(6):1239-1295.

\bibitem{Car}
L. Carleson,
\newblock Some analytic problems related  to statistical mechanics.
\newblock Euclidean Harmonic Analysis(Proc. Sem., Univ. Maryland, College Park, Md, 1979), Lecture Notes in Math. 779, 5-45.

\bibitem{Cas}
J. W. S. Cassels,
\newblock An Introduction to Diophantine Approximation.
\newblock Cambr. Tracts Math. Math. Phys., vol. 45, Cambridge University Press, New York, 1957.


\bibitem{CS}
C. Chen and I. E Shparlinski,
\newblock New bounds of Weyl sums.
\newblock Int. Math. Res. Not., 2021, 11:8451-8491.

\bibitem{CS2}
C. Chen and I. E Shparlinski,
\newblock On large values of Weyl sums.
\newblock Adv. Math., 2020, v.370.

\bibitem{CS3}
C. Chen and I. E Shparlinski,
\newblock Hausdorff dimension of the large values of Weyl sums.
\newblock J. Number Theory, 2020, 214:27-37.

\bibitem{Ch-Sh}
CH. Cho and S. Shiraki,
\newblock Pointwise convergence along a tangential curve for the fractional Schr\"odinger equations.
\newblock Ann. Fenn. Math., 2012, 46(2):993-1005.

\bibitem{CLS-2021} E. Compaan,  R.Luc\`{a}, and G. Staffilani, \newblock Pointwise convergence of the Schr\"odinger flow. Int. Math. Res. Not., 2021, 2021(1): 596-647.

\bibitem{D-K}
B. E. J. Dahlberg and C. E. Kenig,
\newblock A note on the almost everywhere behavior of solutions to the Schr\"odinger equation.
\newblock Harmonic analysis, Minneapolis, MN, 1981, Lecture Notes in Mathmatics 908(Springer, Berlin) 205-209.

\bibitem{Demeter}
C. Demeter,
\newblock On the refined Strichartz estimates.
\newblock arXiv:2002.09525v1.


\bibitem{DGG}
Y. Deng, P. Germain,  and L. Guth,  \newblock  Strichartz estimates for the Schr\"odinger equation on irrational tori. \newblock  J. Funct. Anal., 2017, 273(9):2846-2869.

\bibitem{DGL}
X. Du, L. Guth and X. Li,
\newblock A sharp Schr\"odinger maximal estimate in $\mathbb R^2$.
\newblock Ann. of Math., 2017, 186:607-640.

\bibitem{DGLZ}
X. Du, L. Guth, X. Li and R. Zhang,
\newblock Pointwise convergence of Schr\"odinger solutions and multilinear refined Strichartz estimates.
\newblock Forum of Mathematics, Sigma, 2018, 6:e14.

\bibitem{DKWZ-2020}
X. Du, J. Kim, H. Wang, and R. Zhang,
\newblock   Lower bounds for estimates of the Schr\"odinger maximal function.
 Math. Res. Lett., 2020, 27(3):687-692.

\bibitem{DZ}
X. Du and R. Zhang,
\newblock Sharp $L^{2}$ estimates of the Schr\"odinger maximal function in higher dimensions.
\newblock Ann. of Math., 2019, 189(3):837-861.

\bibitem{EL}Eceizabarrena, Daniel, and Renato Luc\`{a}.  \newblock Convergence over fractals for the periodic Schr\"odinger equation" arXiv preprint arXiv:2005.07581. To appear in
   Analysis \& PDE.
\bibitem{Falconer-1985} K. J. Falconer,  \newblock The Geometry of Fractal Sets. Cambridge University Press, 1985.

\bibitem{Falconer-2003}  K. J. Falconer,  \newblock Fractal Geometry: Mathematical Foundations and Applications, Second Edition. John Wiley \& Sons, Ltd 2003.

\bibitem{GIOW}
L. Guth, A. Iosevich, Y. Ou and H. Wang,
\newblock On Falconer's distance set problem in the plane.
\newblock Invent. Math., 2020, 219:779-830.


\bibitem{Hambrook-2017}
K. Hambrook. \newblock Explicit Salem sets in R2. Adv. Math., 2017, 311:634-648.

\bibitem{Hickman-2016-2}
J. Hickman,
Lecture 2: Introduction to $\ell^2$ decoupling and a first look at the applications, available at: \url{https://www.math.sciences.univ-nantes.fr/~vitturi/lecture_notes/l2\%20decoupling/Decoupling\%20notes\%202.pdf }.



\bibitem{HV-2016-1}  J. Hickman and M. Vitturi, \newblock Lecture 1: Classical methods in restriction theory,
available at: \url{https://www.math.sciences.univ-nantes.fr/~vitturi/lecture_notes/l2\%20decoupling/Decoupling\%20notes\%201.pdf }.

\bibitem{Hu-Li} X. Li and Y. Hu, Discrete Fourier restriction associated with Schrodinger equations, Revista Matematica Iberoamericana 30 (2014), No. 4, 1281-1300.

\bibitem{Jarnik-1931}
V. Jarn\'{i}k,
\newblock \"{U}ber die simultanen diophantischen Approximationen.
\newblock Math. Z., 1931, 33(1):505-543.

\bibitem{Lee}
S. Lee,
\newblock On pointwise convergence of the solutions to Schr\"odinger equations in $\R^2$,
\newblock Int. Math. Res. Not., 2006, 32597, 1-21.



\bibitem{LR}
R. Luc\`a and K. M. Rogers,
\newblock A note on pointwise convergence for the Schr\"odinger equaiton.
\newblock Math. Proc. Camb. Philos. Soc., 2019, 166(2):209-218.

\bibitem{Mattila}
P. Mattila,
\newblock Fourier Analysis and Hausdorff Dimension.
\newblock Cambridge University Press, 2015.


\bibitem{M-V}
A. Moyua and L. Vega,
\newblock Bounds for the maximal function associated to periodic solutions of one-dimensional dispersive equations.
\newblock Bull. London Math. Soc., 2008, 40:117-128.

\bibitem{Oh}
T. Oh,
\newblock Note on a lower bound of the Weyl sum in Bourgain's NLS paper.
\newblock Online note \url{https://www.maths.ed.ac.uk/~toh/Files/WeylSum.pdf}

\bibitem{Pierce}
L. B. Pierce,
\newblock On Bourgain's Counterexample for the Schr\"odinger Maximal Function. Q. J. Math., 2020, 71(1): 1309-1344.


\bibitem{Schmidt} W. M.  Schmidt,
\newblock  Small fractional parts of polynomials. Published for the Conference Board of the Mathematical Sciences by the American Mathematical Society.

\bibitem{SS}
S. Shiraki,
\newblock Pointwise convergence along restricted directions for the fractional Schr\"odinger equation.
\newblock J. Fourier Anal. Appl., 2020, 26:1-12.


\bibitem{Sjo}
P. Sj\"olin,
\newblock Regularity of solutions to the Schr\"odinger equation,
\newblock Duke Math. Journal, 1987, 55(d):699-715.

\bibitem{Tao}
T. Tao,
\newblock A sharp bilinear restrictions estimate for paraboloids.
\newblock Geom. Func. Anal., 2003, 13(6):1359-1384.

\bibitem{Vaughan1} R.C. Vaughan, \textit{The Hardy–Littlewood Method}. Cambridge Tracts in Mathematics, vol. 125. Cambridge University Press, Cambridge (1997)

\bibitem{Vaughan2} R. C. Vaughan,  \textit{On generating functions in additive number theory, I}. In: Chen, W.W.L., Gowers, W.T.,
Halberstam, H., Schmidt, W.M., Vaughan, R.C. (eds.) Analytic Number Theory, Essays in Honour of
Klaus Roth. Cambridge University Press, Cambridge (2009)

\bibitem{Vega}
L. Vega,
\newblock Schr\"odinger equations: pointwise convergence to the initial data,
\newblock Proc. Amer. Math. Soc., 1988, 102(4):874-878.


\bibitem{YZZ1}
J. Yuan, T. Zhao and J. Zheng,
\newblock Pointwise convergence along non-tangential direction for the Schr\"odinger equation with complex time.
\newblock Rev. Mat. Complut., 2021, 34:389-407.


\bibitem{YZZ2}
J. Yuan, T. Zhao and J. Zheng,
\newblock On the dimension of divergence sets of Schr\"odinger equation with complex time.
\newblock Nonlinear Anal., 2021, 208, 112312.


\bibitem{YZ}
J. Yuan and T. Zhao,
\newblock Pointwise convergence along a tangential curve for the fractional Schr\"odinger equation with
$0<m<1$.
\newblock Math. Meth. Appl. Sci., 2021, 1-12.


\end{thebibliography}
\end{document}